\documentclass[a4paper]{article}

\usepackage[top=3cm, bottom=3cm, left=3cm, right=3cm]{geometry}
\usepackage[pdftex]{graphicx}
\usepackage{tikz}
\usetikzlibrary{shapes, arrows, arrows.meta, decorations.markings}

\usepackage{amsmath}
\usepackage{amsthm}
\usepackage{amssymb}
\usepackage{mathtools}
\usepackage[integrals]{wasysym}
\usepackage{stmaryrd}

\usepackage{array}

\usepackage{algorithm}
\usepackage{algorithmic}

\usepackage{bbm}
\usepackage[mathcal]{euscript}

\usepackage[utf8]{inputenc}
\usepackage[T1]{fontenc}
\usepackage[english]{babel}
\usepackage{cite}
\usepackage{lmodern}

\usepackage{calc}
\usepackage[inline]{enumitem}
\usepackage[normalem]{ulem}
\usepackage{url}

\usepackage[hidelinks, bookmarks, bookmarksnumbered, pdfstartview={XYZ null null 1.00}]{hyperref}

\theoremstyle{plain}
\newtheorem{theorem}{Theorem}[section]
\newtheorem{lemma}[theorem]{Lemma}
\newtheorem{corollary}[theorem]{Corollary}
\newtheorem{proposition}[theorem]{Proposition}

\theoremstyle{definition}
\newtheorem{definition}[theorem]{Definition}
\newtheorem{remark}[theorem]{Remark}

\DeclareMathOperator{\Span}{Span}

\DeclareMathOperator{\supp}{spt}
\DeclareMathOperator{\dom}{dom}

\DeclareMathOperator*{\argmin}{arg\,min}
\DeclareMathOperator*{\minimize}{minimize}
\DeclareMathOperator{\dist}{d}

\DeclarePairedDelimiter{\norm}{\lVert}{\rVert}
\DeclarePairedDelimiter{\abs}{\lvert}{\rvert}

\DeclarePairedDelimiter{\ceil}{\lceil}{\rceil}

\DeclarePairedDelimiter{\scalprod}{\langle}{\rangle}

\newcommand{\suchthat}{\ifnum\currentgrouptype=16 \mathrel{}\middle|\mathrel{}\else\mid\fi}
\newcommand{\loc}{\mathrm{loc}}
\newcommand{\diff}{\,\mathrm{d}} 

\DeclareMathOperator{\NAG}{NAG}
\newcommand{\NAGall}{\NAG(X,\allowbreak Y,\allowbreak \pi,\allowbreak L,\allowbreak H,\allowbreak m_0)}
\DeclareMathOperator{\Min}{Min}

\DeclareMathOperator{\Opt}{Opt}
\DeclareMathOperator{\OOpt}{\mathbf{Opt}}

\numberwithin{figure}{section}

\makeatletter
\def\case#1{\def\tempa{#1}\futurelet\next\case@i} 
\def\case@i{\ifx\next\bgroup\expandafter\case@ii\else\expandafter\case@end\fi} 
\def\case@ii#1{ 
  \par
  \addvspace{\medskipamount}%
  \noindent\emph{Case~\tempa: #1.}\par\nobreak\smallskip
  \@afterheading%
} 
\def\case@end{ 
  \par
  \addvspace{\medskipamount}%
  \noindent\emph{Case~\tempa.}%
}
\makeatother

\begin{document}

\setlength{\parskip}{1pt plus 1pt minus 1pt} 

\setlist[enumerate, 1]{label={\textnormal{(\alph*)}}, ref={(\alph*)}, leftmargin=0pt, itemindent=*}
\setlist[enumerate, 2]{label={\textnormal{(\roman*)}}, ref={(\roman*)}}
\setlist[description, 1]{leftmargin=0pt, itemindent=*}
\setlist[itemize, 1]{label={\textbullet}, leftmargin=0pt, itemindent=*}

\newlist{hypothesisMFG}{enumerate}{1}
\setlist[hypothesisMFG]{label={\textup{(H\arabic*)}}, ref={(H\arabic*)}, leftmargin=*, widest*=9}
\newlist{hypothesisNAG}{enumerate}{1}
\setlist[hypothesisNAG]{label={\textup{(A\arabic*)}}, ref={(A\arabic*)}, leftmargin=*, widest*=9}


\title{A variational mean field game of controls with free final time and pairwise interactions}

\author{Guilherme Mazanti\thanks{Université Paris-Saclay, CNRS, CentraleSupélec, Inria, Laboratoire des signaux et systèmes, 91190 Gif-sur-Yvette, France.}~\thanks{Fédération de Mathématiques de CentraleSupélec, 91190, Gif-sur-Yvette, France.} \and Laurent Pfeiffer\footnotemark[1]~\footnotemark[2] \and Saeed Sadeghi Arjmand\thanks{Université Paris-Panthéon-Assas, Laboratoire d'économie mathématique et de microéconomie appliquée (LEMMA), place du Panthéon, 75005 Paris, France.}}

\maketitle

\begin{abstract}
This article considers a mean field game model inspired by crowd motion models in which agents aim at reaching a given target set and wish to minimize a cost consisting of an individual running cost, an individual cost depending on the arrival time at the target set, and an interaction running cost, which takes the form of pairwise interactions with other agents through both positions and velocities. We subsume this game under a more general class of games on abstract Polish spaces with pairwise interactions, and prove that the latter games have a variational structure (in the sense that their equilibria can be characterized as critical points of some potential functional) and admit equilibria. We also discuss two a priori distinct notions of equilibria, providing a sufficient condition under which both notions coincide. The results for the games in abstract Polish spaces are applied to our mean field game model, and a numerical illustration concludes the paper.

\bigskip

\noindent\textbf{Keywords.} Mean field games, Lagrangian equilibria, congestion games, variational games, potential games, pairwise interactions, existence of equilibria.

\noindent\textbf{Mathematics Subject Classification 2020.} 49N80, 49J27, 49K27, 91A07, 91A14, 91A16.
\end{abstract}

\tableofcontents

\section{Introduction}

Since the introduction of mean field games (MFGs) in 2006 by Jean-Michel Lasry and Pierre-Louis Lions \cite{Lasry2006JeuxI, Lasry2006JeuxII, Lasry2007Mean} and the simultaneous independent works by Peter E.~Caines, Minyi Huang, and Roland P.~Malhamé \cite{Huang2006Large, Huang2007Large, Huang2003Individual}, many authors have proposed MFG models for (or inspired by) the analysis of crowd motion \cite{Lachapelle2011Mean, Burger2013Mean, Benamou2017Variational, Bagagiolo2022Optimal, Cristiani2023Generalized, Mazanti2019Minimal, Santambrogio2021Cucker}. While the mathematical description of the movement of crowds of people has attracted the interest of several researchers along the years (see, e.g., \cite{Aylaj2022Crowd, Helbing1995Social, Henderson1971Statistics, Maury2019Crowds, Muntean2014Collective, Hughes2002Continuum}), the MFG approach to crowd motion has the particularity that, by considering agents of the crowd to be rational, one may consider that each agent chooses their trajectory based not only on the present or past states of other agents, but also on a rational anticipation of their future behavior. Our aim in this paper is to propose and study an MFG model inspired by crowd motion in which agents interact through a Cucker--Smale-type interaction, in the spirit of \cite{Santambrogio2021Cucker}.

To the authors' knowledge, the first work to be fully dedicated to a mean field game model for crowd motion is \cite{Lachapelle2011Mean}, which proposes an MFG model for a two-population crowd with trajectories perturbed by additive Brownian motion and considers both their stationary distributions and their evolution on a prescribed time interval. Other works have later proposed MFG models for crowd motion taking into account different characteristics. To cite a few examples, let us mention \cite{Burger2013Mean}, which considers the fast exit of a crowd and proposes a mean field game model which is studied numerically; \cite{Cardaliaguet2016First}, which is not originally motivated by the modeling of crowd motion but considers an MFG model with a density constraint, which is a natural assumption in some crowd motion models (see also \cite{Meszaros2015Variational} for the case of second-order mean field games with density constraints); \cite{Achdou2017Mean, Carlini2018Fully}, which study a two-population MFG model motivated by urban settlements; \cite{Benamou2017Variational}, which presents numerical simulations for some variational mean field games related to crowd motion; \cite{Bagagiolo2022Optimal}, which, inspired by cities crowded by tourists such as Venice, Italy, proposes a mean field game model for the movement of pedestrian tourists; \cite{Cristiani2023Generalized}, which provides a generalized MFG model for pedestrians with limited predictive abilities; or also \cite{Mazanti2019Minimal, Dweik2020Sharp, Ducasse2022Second, Sadeghi2022Multi, Sadeghi2022Nonsmooth, Mazanti2024Note}, which consider MFG models in which the goal of each agent is to reach a given target set in minimal time, interactions between agents being modeled through a congestion-dependent maximal velocity. Other works studying mean field games motivated by or related to crowd motion include \cite{Achdou2019Mean, Bardi2021Convergence, Aurell2020Behavior, Djehiche2017Mean, Carlini2017Semi}.

A common feature of several MFG models for crowd motion is that, at a given time $t$, an agent at a space position $x$ will interact with other agents only through the distribution $m_t$ of their positions in the state space at time $t$. While this already provides models with interesting features, a position-only interaction fails to capture important aspects of crowd motion. Indeed, given two agents at a certain distance from one another, it is natural to expect that their interaction will be qualitatively different depending on whether they are moving towards or away from one another, since in the first case they are expected to deviate in order to avoid a collision, while, in the second case, their movements are not expected to have a mutual interference. This motivates the study of MFG models for crowd motion in which the interaction through agents is based not only on their positions, but also on their velocities, or, equivalently for many models, on their controls.

Mean field game models in which the dependence on other agents takes into account not only their positions but also their controls are called \emph{MFGs of controls}. Their analysis, started in \cite{Gomes2014Existence, Gomes2016Extended}, has been much developed in recent years (see, e.g., \cite{Cardaliaguet2018Mean, Djete2023Large, Camilli2023Quasi, Bonnans2023Lagrangian, Graber2023Master, Kobeissi2022Mean, Kobeissi2022Classical, Santambrogio2021Cucker, Graber2021Weak, Graber2025Remarks, Hofer2026Optimal}, and, in particular, the PhD thesis \cite{Kobeissi2020Contributions}). In the context of crowd motion, \cite{Santambrogio2021Cucker} has proposed a model\footnote{We point out that the authors of \cite{Santambrogio2021Cucker} argue that whether the model they analyze is indeed an MFG of controls is up to debate, and that ``classifying it or not as an MFG of controls is a matter of taste''.} which assumes that agents evolve in a given set $\Omega \subset \mathbb R^d$, that the trajectory $\gamma$ of an agent choosing a control $u$ is described by the control system $\dot\gamma(t) = u(t)$, and that an agent chooses their control $u$ in order to minimize a cost of the form
\begin{equation}
\label{eq:intro-cost-CS}
\int_{0}^{T} \left(\frac{\delta}{2}\abs{\dot \gamma(t)}^2 + \frac{\lambda}{2} \int_{\mathcal C([0, T], \Omega)} \eta\left(\gamma (t) - \widetilde \gamma (t)\right)\abs*{\dot \gamma(t) - \dot{\widetilde \gamma}(t)}^2 \diff Q(\widetilde \gamma)\right) \diff t + \Psi(\gamma(T)),
\end{equation}
where $\delta$ and $\lambda$ are positive constants, $\eta\colon \mathbb R^d \to \mathbb R_+$ is an interaction kernel, $T > 0$ is a fixed time horizon, and $Q$ is a probability measure on the set of all continuous trajectories $\mathcal C([0, T], \Omega)$ describing the distribution of trajectories of all agents.

Given an initial distribution of agents $m_0$, described mathematically as a probability measure on $\Omega$, an equilibrium of this MFG model with initial condition $m_0$ is then a probability measure $Q$ on $\mathcal C([0, T], \Omega)$ whose evaluation at time $0$ is\footnote{That is, ${e_0}_{\#} Q = m_0$, using the notation described in page~\pageref{notation} below.} $m_0$ and which is concentrated on trajectories $\gamma$ that are optimal for \eqref{eq:intro-cost-CS} with a fixed initial condition. It is proved in \cite[Proposition~3.1]{Santambrogio2021Cucker} that this MFG model is \emph{variational} (also called \emph{potential}), i.e., that there exists a function $\mathcal J$, defined over the space of probability measures on $\mathcal C([0, T], \Omega)$ whose evaluation at time $0$ is $m_0$, such that (local) minimizers of $\mathcal J$ are equilibria of the MFG with initial condition $m_0$. In particular, this variational structure allows one to prove the existence of an equilibrium for the MFG by proving the existence of a (global) minimizer of $\mathcal J$, which is done in \cite[Theorem~3.1]{Santambrogio2021Cucker}. Additional properties of equilibria are also provided in \cite{Santambrogio2021Cucker}, such as the fact that, under suitable additional assumptions, optimal trajectories are $\mathcal C^{1, 1}$ and unique. We also refer to \cite{Hofer2026Optimal} for a related second-order potential MFG of controls with application in particular to a flocking model in the spirit of \eqref{eq:intro-cost-CS}.

Note that this way of describing equilibria as probability measures on the space of trajectories corresponds to the \emph{Lagrangian formulation} of mean field games. The Lagrangian formulation is a classical approach in optimal transport problems (see, e.g., \cite{Santambrogio2015Optimal, Ambrosio2005Gradient, Villani2009Optimal}), which has been used for instance in \cite{Brenier1989Least} to study incompressible flows, in \cite{Carlier2008Optimal} for Wardrop equilibria in traffic flow, or in \cite{Bernot2009Optimal} for branched transport problems. The use of the Lagrangian approach in mean field games dates back at least to \cite{Cardaliaguet2016First, Cardaliaguet2015Weak}, and since then it has been used in several works, such as \cite{Benamou2017Variational, Cannarsa2018Existence, Cannarsa2019C11, Cannarsa2021Mean, Fischer2021Asymptotic, Graber2025Remarks, Mazanti2019Minimal, Dweik2020Sharp, Sadeghi2022Multi, Sadeghi2022Nonsmooth, Sadeghi2021Characterization, Mazanti2024Note}. In particular, a discussion on the Lagrangian approach for potential MFGs of controls is provided in \cite[Section~2.5]{Graber2025Remarks}.

The model of \cite{Santambrogio2021Cucker} is inspired by the Cucker--Smale model. Introduced in \cite{Cucker2007Emergent} to model the evolution of a flock of birds, the Cucker--Smale model assumes that a finite number of agents (which represent birds in the original model) evolve according to a prescribed law taking into account the fact that agents wish to align their velocities with others, and that this alignment effect is stronger when the birds are closer. In \eqref{eq:intro-cost-CS}, this is modeled by the term $\eta\left(\gamma (t) - \widetilde \gamma (t)\right)\abs*{\dot \gamma(t) - \dot{\widetilde \gamma}(t)}^2$, which penalizes large differences in velocities for agents that are close if one assumes that $\eta$ is a Gaussian-like kernel (for instance, nonnegative, depending only on the modulus of its argument, and nonincreasing with it). Note also that the interactions between agents in \eqref{eq:intro-cost-CS} are pairwise: for a given agent with a trajectory $\gamma$, its total interaction is the integral of the interactions of the pairs of trajectories $(\gamma, \widetilde\gamma)$ for all trajectories $\widetilde\gamma$ distributed according to $Q$.

Most mean field games considered in the literature, including the one from \cite{Santambrogio2021Cucker} but also those from \cite{Lasry2006JeuxII, Lasry2007Mean, Huang2006Large, Bagagiolo2022Optimal, Benamou2017Variational, Lachapelle2011Mean, Burger2013Mean, Cardaliaguet2016First, Achdou2017Mean, Carlini2018Fully, Achdou2019Mean, Aurell2020Behavior, Carlini2017Semi, Djehiche2017Mean}, assume that the time interval $[0, T]$ of the game is fixed, with all agents starting at time $0$ and ending their movement at time $T$. However, in many practical applications, including crowd motion but also in economics, for instance, it is interesting to study models in which the final time for the movement of an agent is free, and is actually part of their minimization criterion. In particular, agents may leave the game before its end. MFGs with a free final time have been considered, for instance, in \cite{Carmona2017Mean, Burzoni2023Mean}, which consider mean field games with applications to bank run, i.e., situations in which clients of a bank, believing that the bank is about to fail, withdraw all their money, and try to choose the time to withdraw the money in an optimal way. These models belong to a more general class of mean field game problems known as mean field games of optimal stopping, in which the main choice of an agent is when to stop the game \cite{Bertucci2018Optimal, Gomes2015Obstacle, Nutz2018Mean, Bouveret2020Mean, Nutz2020Convergence, Bertucci2021Monotone}. Other works, such as \cite{Chan2017Fracking, Graber2020Mean}, consider MFGs with free final time for the production of exhaustible resources, in which firms, who wish to maximize their profit, produce goods based on exhaustible resources, and they leave the game when they deplete their capacities. These games can be seen as mean field games with an absorbing boundary, i.e., in which agents who reach a certain part of the boundary of the domain immediately leave the game \cite{Campi2018NPlayer, DiPersio2022Master}. In the context of crowd motion, MFG models with free final time were developed in \cite{Cristiani2023Generalized, Ducasse2022Second, Dweik2020Sharp, Mazanti2024Note, Mazanti2019Minimal, Sadeghi2022Multi, Sadeghi2022Nonsmooth}.

In this paper, we generalize the model from \cite{Santambrogio2021Cucker} by considering that \begin{enumerate*}[label={(\roman*)}]
\item\label{item:novel-1} one has a more general individual running cost than $\frac{\delta}{2}\abs{\dot \gamma(t)}^2$ from \eqref{eq:intro-cost-CS};
\item\label{item:novel-2} one has a more general interaction cost than $\eta\left(\gamma (t) - \widetilde \gamma (t)\right)\abs*{\dot \gamma(t) - \dot{\widetilde \gamma}(t)}^2$ from \eqref{eq:intro-cost-CS};
\item\label{item:novel-3} the final time of the movement of an agent is not a prescribed time $T$, but is part of the optimization criterion of each agent; and
\item\label{item:novel-4} the aim of each agent is to reach a given common target set $\Xi$.
\end{enumerate*}
More precisely, we shall consider that each agent of the game minimizes a cost of the form
\begin{equation}
\label{eq:intro-cost-general}
\int_{0}^{+\infty} \ell(t, \gamma(t), \dot{\gamma}(t))\diff t + \Psi(\tau(\gamma)) + \int_{\Gamma} \int_{0}^{\tau(\gamma)\wedge \tau(\widetilde \gamma)} h(t, \gamma(t), \widetilde{\gamma}(t), \dot{\gamma}(t), \dot{\widetilde{\gamma}}(t)) \diff t\diff Q(\widetilde{\gamma}),
\end{equation}
where $\ell$, $h$, and $\Psi$ are given functions and, for a given trajectory $\gamma$, $\tau(\gamma)$ denotes the exit time of $\gamma$, i.e., the first time at which $\gamma$ reaches the target set $\Xi$ (or $\tau(\gamma) = +\infty$ if $\gamma$ does not reach $\Xi$). While \ref{item:novel-1} and \ref{item:novel-2} do not bring many technical difficulties with respect to \cite{Santambrogio2021Cucker}, novel ideas are required to treat the free final-time setting from \ref{item:novel-3} and \ref{item:novel-4} and address the difficulties coming from a noncompact time interval and the lack of continuity of the exit-time function $\tau(\cdot)$.

It turns out that the potential structure highlighted in \cite{Santambrogio2021Cucker} is not specific to the MFG considered in that reference nor to the MFG described by \eqref{eq:intro-cost-general} and comes instead mostly from the pairwise nature of the interaction between agents. Hence, instead of working directly with \eqref{eq:intro-cost-general}, we consider in most of the paper a more general non-atomic game model in abstract Polish spaces (which are not necessarily spaces of trajectories) in which agents minimize the sum of an individual cost and a pairwise interaction cost. The analysis of more general non-atomic games and their potential structure has also been the subject of other works in the literature, such as \cite{Blanchet2016Optimal}, which considers a non-atomic game which may contain pairwise interaction terms as a particular case. We point out, however, that the model we consider cannot be subsumed under that of \cite{Blanchet2016Optimal} (see Remark~\ref{remk:Blanchet-Carlier}).

In addition to showing, as in \cite{Santambrogio2021Cucker}, that the non-atomic game we consider has a potential structure in the sense that local minimizers of a potential $\mathcal J$ are equilibria of the game, we go further in the potential formulation by showing (Theorem~\ref{thm:potential}) that the set of equilibria of the game is exactly the set of critical points of its potential function $\mathcal J$. We then deduce (Theorem~\ref{thm:exist-abstract}) existence of an equilibrium by proving the existence of a minimizer for $\mathcal J$. We also discuss two a priori distinct notions of equilibria, strong and weak equilibria, and prove (Theorem~\ref{thm:strong-iff-equilibrium}) that, under suitable assumptions, both notions coincide for our model. Finally, we apply the results obtained for the general non-atomic game to the MFG consisting in each agent minimizing \eqref{eq:intro-cost-general}, proving existence of equilibria (Theorem~\ref{thm:3:exist}) and equivalence between strong and weak equilibria (Theorem~\ref{thm:3:equiv}).

The question of uniqueness of equilibria is not addressed in the present work. In mean field games, uniqueness is often related to suitable monotonicity assumptions on the interaction terms, as in the classical references \cite{Lasry2006JeuxI, Lasry2006JeuxII, Lasry2007Mean}, or displacement monotonicity assumptions, as in \cite{Meszaros2024Mean, Gangbo2022Mean, Ahuja2016Wellposedness}. It can also be obtained under smallness assumptions on the horizon of the game or on the interaction between agents \cite{Lasry2007Mean, Carmona2013Probabilistic, Carmona2013Mean}, and recent works such as \cite{Graber2023Monotonicity, Mou2025Mean} have also proposed other uniqueness conditions. In variational settings, uniqueness may also follow from strict convexity properties of the potential functional (see, e.g., \cite{Benamou2017Variational}). It is not immediate to apply or adapt these standard techniques to our setting, in particular since the potential functional used in this paper is not convex in general (see Remark~\ref{remk:not-convex}), and therefore providing conditions ensuring uniqueness of equilibria for our model remains an open question. We also refer to \cite[Remark~4.9]{Dweik2020Sharp}, \cite[Remark~7.1]{Mazanti2019Minimal}, and \cite[Remark~5.7]{Sadeghi2022Multi} for discussions on (non-)uniqueness of equilibria for related mean field game models motivated by crowd motion but without control interaction.
 
The sequel of the paper is organized as follows. Section~\ref{Cucker-Smale:The Model} presents the MFG model consisting on each agent minimizing \eqref{eq:intro-cost-general}, providing our standing assumptions on the functions $\ell$, $h$, and $\Psi$ from \eqref{eq:intro-cost-general} as well as the precise definition of the exit time function $\tau(\cdot)$ and the definition of equilibrium of the game. Section~\ref{sec:abstract} moves to the analysis of a more general non-atomic game model in Polish spaces, presenting the model and its assumptions, providing its elementary properties in Section~\ref{sec:first-properties}, proving its potential structure in Section~\ref{sec:potential}, deducing existence of equilibria in Section~\ref{sec:existence-of-equilibria}, and discussing the equivalence of the a priori distinct notions of strong and weak equilibria in Section~\ref{sec:strong-equilibria}. These general results are applied to the MFG model from Section~\ref{Cucker-Smale:The Model} in Section~\ref{sec:application}. The paper is concluded by a numerical simulation in Section~\ref{sec:illustration} illustrating behavior of the equilibria in an $N$-player setting.

\medskip

\noindent\textbf{Notation.}\label{notation} In this paper, $d$ is a fixed positive integer, the set of nonnegative real numbers is denoted by $\mathbb R_+$, the set of nonnegative rational numbers is denoted by $\mathbb Q_+$, and we define $\overline{\mathbb R}_+$ as $\overline{\mathbb R}_+ = \mathbb R_+ \cup \{+\infty\}$. For a given $\Omega \subset \mathbb R^d$ nonempty, open, and bounded, we let $\mathcal{C}(\mathbb R_+;\overline \Omega)$ be the spaces of continuous curves from $\mathbb R_+$ to $\overline \Omega$ endowed with the topology of uniform convergence on compact time intervals, with which it is a Polish space. For simplicity, we denote $\mathcal{C}(\mathbb R_+; \overline \Omega)$ by $\Gamma$.

Given a Polish space $X$, we denote the space of Borel probability measures on $X$ by $\mathcal{P}(X)$ and endow it with the topology of weak convergence of measures. The support of $\mu \in \mathcal{P}(X)$ is denoted by $\supp(\mu)$, and is defined as the set of all points $x \in X$ such that $\mu(N_x) > 0$ for every neighborhood $N_x$ of $x$. We denote the Dirac measure centered at a point $x_0 \in X$ by $\delta_{x_0}$.

For two metric spaces $X$ and $Y$ endowed with their Borel $\sigma$-algebras and a Borel-measurable map $f\colon X \to Y$, the pushforward of a measure $\mu$ on $X$ through $f$ is the measure $f_{\#} \mu$ on $Y$ defined by $f_{\#} \mu (B) = \mu(f^{-1}(B))$ for every Borel subset $B$ of $Y$. For $t\in \mathbbm R_+$, we denote by $e_t\colon \Gamma \to \overline \Omega$ the evaluation map at time $t$, defined by $e_t(\gamma) = \gamma(t)$ for every $\gamma \in \Gamma$.

\section{Mean field game model with pairwise interactions}
\label{Cucker-Smale:The Model}

We consider here that agents move in $\overline\Omega$, where $\Omega$ is a nonempty open bounded subset of $\mathbb R^d$, and that their goal is to reach a certain target $\Xi \subset \overline\Omega$, which is a nonempty closed set. The initial distribution of agents $m_0 \in \mathcal P(\overline\Omega)$ at time $t = 0$ is known, agents are submitted to the trivial control system $\dot\gamma(t) = u(t)$, and the goal of an agent starting at position $x_0$ is to choose a control $u\colon \mathbb R_+ \to \mathbb R^d$ such that $\gamma(t) \in \overline\Omega$ for every $t \in \mathbb R_+$ and the cost $F(\gamma, Q)$ defined by
\begin{multline}
\label{eq:cost}
F(\gamma, Q) = \int_{0}^{+\infty} \ell(t, \gamma(t), \dot{\gamma}(t))\diff t + \Psi(\tau(\gamma)) + \int_{\Gamma} \int_{0}^{\tau(\gamma)\wedge \tau(\widetilde \gamma)} h(t, \gamma(t), \widetilde{\gamma}(t), \dot{\gamma}(t), \dot{\widetilde{\gamma}}(t)) \diff t\diff Q(\widetilde{\gamma})
\end{multline}
is minimized among all trajectories $\gamma$ starting at the same position $x_0$. In \eqref{eq:cost}, $\ell\colon \mathbb R_+ \times \overline\Omega \times \mathbb R^d \to \mathbb R_+$, $h\colon \mathbb R_+ \times \overline\Omega \times \overline\Omega \times \mathbb R^d \times \mathbb R^d \to \mathbb R_+$, and $\Psi\colon \mathbb R_+ \to \mathbb R_+$ are given functions, $Q \in \mathcal P(\Gamma)$ represents the distribution of the trajectories of all agents, and $\tau\colon \Gamma \to \overline{\mathbb R}_+$ is the first exit time function, defined, for $\gamma \in \Gamma$, by
\begin{equation}
\label{eq:defi3-tau}
\tau(\gamma) = \inf\{t \in \mathbb R_+ \suchthat \gamma(t) \in \Xi\},
\end{equation}
with the convention $\inf\emptyset = +\infty$. We also set, by convention, $F(\gamma, Q) = +\infty$ if $\gamma$ is not absolutely continuous, if $Q$ gives positive measure to a set of functions which are not absolutely continuous, or if $\widetilde\gamma \mapsto \int_{0}^{\tau(\gamma)\wedge \tau(\widetilde \gamma)} h(t, \gamma(t), \widetilde{\gamma}(t), \dot{\gamma}(t), \dot{\widetilde{\gamma}}(t)) \diff t$ is not $Q$-integrable.

Let us provide the definition of equilibrium of this mean field game.

\begin{definition}
\label{def:equilibrium-3}
Let $m_0 \in \mathcal P(\overline\Omega)$. A measure $Q \in \mathcal P(\Gamma)$ is said to be an \emph{equilibrium} with initial condition $m_0$ if ${e_0}_{\#} Q = m_0$, $\int_{\Gamma} F(\gamma, Q) \diff Q(\gamma) < +\infty$, and $Q$-almost every $\gamma$ satisfies
\[
F(\gamma, Q) = \inf_{\substack{\omega \in \Gamma \\ \omega(0) = \gamma(0)}} F(\omega, Q).
\]
\end{definition}

Let us now introduce the main assumptions on $\Omega$, $\Xi$, $\ell$, $h$, and $\Psi$ that we will need for the analysis of this mean field game.

\begin{hypothesisMFG}
\item\label{Hypo3-Omega} $\Omega \subset \mathbb R^d$ is nonempty, open, and bounded.

\item\label{Hypo3-Gamma} $\Xi \subset \overline\Omega$ is nonempty and closed.

\item \label{l is superlinear} The function $\ell\colon\mathbb R_+\times \overline{\Omega}\times \mathbb R^d \to \mathbb R_+$ is such that
\begin{enumerate}
\item $t \mapsto \ell(t, x, p)$ is measurable for every $(x, p) \in \overline\Omega \times \mathbb R^d$;
\item $(x, p) \mapsto \ell(t, x, p)$ is continuous for almost every $t \in \mathbb R_+$;
\item $p \mapsto \ell(t, x, p)$ is convex for almost every $t \in \mathbb R_+$ and every $x \in \overline\Omega$; and
\item\label{item:lower-bound-ell} there exist constants $\alpha > 0$ and $\theta > 1$ such that
\[
\ell(t,x, p) \geq \alpha \abs*{p}^{\theta}, \qquad  \text{ for all } (t, x, p) \in \mathbb R_+ \times \overline{\Omega} \times \mathbb R^d.
\]
\end{enumerate}

\item \label{h convex and symmetric} The function $h\colon \mathbb R_+ \times \overline{\Omega}\times \overline{\Omega}\times  \mathbb R^d\times \mathbb R^d \to \mathbb R_+$ is such that
\begin{enumerate}
\item $t \mapsto h(t, x, \widetilde x, p, \widetilde p)$ is measurable for every $(x, \widetilde x, p, \widetilde p) \in \overline\Omega \times \overline\Omega \times \mathbb R^d \times \mathbb R^d$;
\item $(x, \widetilde x, p, \widetilde p) \mapsto h(t, x, \widetilde x, p, \widetilde p)$ is continuous for almost every $t \in \mathbb R_+$;
\item $(p, \widetilde p) \mapsto h(t, x, \widetilde x, p, \widetilde p)$ is convex for almost every $t \in \mathbb R_+$ and every $(x, \widetilde x) \in \overline\Omega \times \overline\Omega$;
\item\label{item:H-symmetric} for all $(t, x, \widetilde x, p, \widetilde p) \in \mathbb R_+ \times \overline{\Omega} \times \overline{\Omega} \times \mathbb R^d \times \mathbb R^d$, we have the symmetry property
\[
h(t, x, \widetilde{x}, p, \widetilde{p}) = h(t, \widetilde{x}, x,  \widetilde{p}, p); \text{ and}
\]
\item there exist constants $C > 0$ and $\beta \in (0, \theta]$, where $\theta$ is the constant from \ref{l is superlinear}\ref{item:lower-bound-ell}, such that 
\[
h(t, x, \widetilde{x}, p, \widetilde{p}) \leq C(\abs*{p}^{\beta} + \abs*{\widetilde{p}}^{\beta}), \qquad  \text{ for all } (t, x, \widetilde x, p, \widetilde p) \in \mathbb R_+ \times \overline{\Omega} \times \overline{\Omega} \times \mathbb R^d \times \mathbb R^d.
\]
\end{enumerate}

\item\label{Psi1} The function $\Psi\colon \mathbb R_+ \to \mathbb R_+$ is lower semicontinuous, nondecreasing, and there exist positive constants $a$ and $b$ such that
\[
\Psi(t) \ge at - b, \qquad \text{ for all } t\in \mathbb R_+.
\]

\item \label{individualCost} There exists a constant $\kappa > 0$ such that, for every $x_0 \in \overline\Omega$, there exists an absolutely continuous curve $\gamma\in \Gamma$ with $\gamma(0)=x_0$ and
\[
\int_0^{+\infty} \ell(t, \gamma(t), \dot\gamma(t)) \diff t + \Psi(\tau(\gamma)) \leq \kappa.
\]
\end{hypothesisMFG}

Note that \ref{individualCost} can be seen as an accessibility assumption: wherever in $\overline\Omega$ is the starting point $x_0$, there always exists an absolutely continuous trajectory allowing an agent to reach the target set $\Xi$ within a time that is uniformly bounded, and with a running cost that is also uniformly bounded. It can be ensured, for instance, if $\Omega$ is connected and has smooth boundary.

In the sequel, instead of directly studying this mean field game, we will consider in Section~\ref{sec:abstract} a more general non-atomic game containing this MFG as a particular case. We will prove that this non-atomic game has a potential structure and use this fact to deduce the existence of equilibria. Finally, we will apply the results obtained for the general non-atomic game to the above MFG in Section~\ref{sec:application}.

\section{A general non-atomic game with pairwise interactions}
\label{sec:abstract}

As a preliminary step towards the study of the mean field game model we are interested in this paper, we consider here a more general non-atomic game with pairwise interactions. Let $X$ and $Y$ be Polish spaces and $\pi\colon X \to Y$, $L\colon X \to \overline{\mathbb R}_+$, and $H\colon X \times X \to \overline{\mathbb R}_+$ be Borel-measurable functions. Each agent of the game is assumed to be associated with an element $y \in Y$ (but this association is not necessarily injective: different agents can be associated with the same element of $Y$), and we assume that the distribution of agents according to the elements associated with them in $Y$ is described by a known probability measure $m_0 \in \mathcal P(Y)$. An agent of the game associated with some $y \in Y$ wishes to minimize the cost $F(x, Q)$ given by
\begin{equation}
\label{eq:abstract-cost}
F(x, Q) = L(x) + \int_X H(x, \widetilde x) \diff Q(\widetilde x)
\end{equation}
with the constraint $\pi(x) = y$, i.e., the agent wants to choose $x \in X$ solving the minimization problem
\begin{equation}
\label{eq:minimization-abstract}
\Min(y, Q)\colon
\left\{
\begin{aligned}
\minimize_{x \in X } {} & F(x, Q) \\
\text{ subject to } & \pi(x) = y,
\end{aligned}
\right.
\end{equation}
where $Q \in \mathcal P(X)$ denotes the distribution of the choices of all agents. Note that the cost $F(x, Q)$ can be interpreted as containing an individual cost $L(x)$ and an average pairwise interaction cost $\int_X H(x, \widetilde x) \diff Q(\widetilde x)$. In the sequel, we refer to this non-atomic game as $\NAGall$ and we look for equilibria of this game, according to the following definition.

\begin{definition}
\label{def:equilibrium-potential}
Let $X$ and $Y$ be Polish spaces, $\pi\colon X \to Y$, $L\colon X \to \overline{\mathbb R}_+$, and $H\colon X \times X \to \overline{\mathbb R}_+$ be Borel-measurable functions, and $m_0 \in \mathcal P(Y)$. We say that $Q \in \mathcal P(X)$ is an \emph{equilibrium} of $\NAGall$ if $\pi_{\#} Q = m_0$, $\int_X F(x, Q) \diff Q(x) < +\infty$, and $Q$-almost every $x \in X$ solves $\Min(\pi(x), Q)$ with $F$ given by \eqref{eq:abstract-cost}.
\end{definition}

\begin{remark}
It is usual, in the literature on non-atomic games, to assume that the continuum set of players is \([0, 1]\) endowed with the Lebesgue measure over the Lebesgue or Borel \(\sigma\)-algebra (see, e.g., \cite{Aumann1974Values, Schmeidler1973Equilibrium}). In our framework, we are not interested in the players individually, but only in the distribution of their \emph{types} in \(Y\). For this reason, we make no assumption on the measure \(m_0\), and in particular this measure may have atoms. Our framework can be seen as a non-atomic game by regarding \(m_0\) as the pushforward of the Lebesgue measure in \([0, 1]\) through a measurable map associating, with each player in \([0, 1]\), its type in \(Y\). Atoms in \(m_0\) may thus come from the non-injectivity of such a map and represent the fact that a non-negligible amount of players share the same type.
\end{remark}

\begin{remark}
\label{remk:concrete}
The mean field game from Section~\ref{Cucker-Smale:The Model} is a particular case of $\NAGall$. To see that, it suffices to set $Y = \overline\Omega$, $X = \Gamma$ with the topology of uniform convergence on compact sets, $\pi = e_0$, the evaluation map at time zero, and define the functions $L\colon \Gamma \to \overline{\mathbb R}_+$ and $H\colon \Gamma \times \Gamma \to \overline{\mathbb R}_+$ by
\[
L(\gamma) = \begin{dcases*}
\int_0^{+\infty} \ell(t, \gamma(t), \dot\gamma(t)) \diff t + \Psi(\tau(\gamma)) & if $\gamma$ is absolutely continuous, \\
+\infty & otherwise
\end{dcases*}
\]
and
\[
H(\gamma, \widetilde\gamma) = \begin{dcases*}
\int_0^{\tau(\gamma) \wedge \tau(\widetilde\gamma)} h(t, \gamma(t), \widetilde\gamma(t), \dot\gamma(t), \dot{\widetilde\gamma}(t)) \diff t & if $L(\gamma) < +\infty$ and $L(\widetilde\gamma) < +\infty$, \\
+\infty & otherwise.
\end{dcases*}
\]
We will justify later (see Proposition~\ref{prop:same-equilibria}) that, with these definitions, the notions of equilibria from Definitions~\ref{def:equilibrium-3} and \ref{def:equilibrium-potential} coincide.
\end{remark}

\begin{remark}
\label{remk:Blanchet-Carlier}
For an individual agent associated with a given element $y \in Y$, their cost \eqref{eq:abstract-cost} and their minimization problem \eqref{eq:minimization-abstract} are particular cases of the more general problem of, given $y \in Y$, 
\begin{equation}
\label{eq:Blanchet-Carlier}
\minimize_{x \in X} c(x, y) + \mathcal V[Q](x),
\end{equation}
without the constraint $\pi(x) = y$. Indeed, \eqref{eq:Blanchet-Carlier} reduces to \eqref{eq:minimization-abstract} with the choice $c(x, y) = L(x) + \chi_{\operatorname{graph}(\pi)}(x, y)$ and $\mathcal V[Q](x) = \int_X H(x, \widetilde x) \diff Q(\widetilde x)$, where $\chi_{\operatorname{graph}(\pi)}$ is the characteristic function of the graph of $\pi$, i.e., $\chi_{\operatorname{graph}(\pi)}(x, y) = 0$ if $y = \pi(x)$ and $\chi_{\operatorname{graph}(\pi)}(x, y) = +\infty$ otherwise.

The non-atomic game in which each agent minimizes \eqref{eq:Blanchet-Carlier} was previously considered in the literature, for instance, in \cite{Blanchet2016Optimal}, where the authors study equilibria of the game, named in that context as \emph{Cournot--Nash equilibria}, through a variational approach, exploiting in particular links between such equilibria and optimal transport problems. These ideas, however, cannot be directly applied to the non-atomic game $\NAGall$ that we consider in this article, as some key assumptions of \cite{Blanchet2016Optimal} cannot be satisfied in our context. In particular, \cite{Blanchet2016Optimal} requires $c$ to be continuous in order to obtain good properties for the associated Wasserstein distance $W_c$, but such a continuity assumption can never be satisfied in our model due to the presence of the characteristic function $\chi_{\operatorname{graph}(\pi)}$ in the expression of $c$.
\end{remark}

Let us now state the main assumptions under which we will consider the abstract non-atomic game $\NAGall$.

\begin{hypothesisNAG}
\item\label{Hypo-XY-Polish} $X$ and $Y$ are Polish spaces.
\item\label{Hypo-pi} $\pi\colon X \to Y$ is continuous.
\item\label{Hypo-LH} $L\colon X \to \overline{\mathbb R}_+$ is lower semicontinuous and $H\colon X \times X \to \overline{\mathbb R}_+$ is Borel measurable.
\item\label{Hypo-H-symmetric} $H$ is symmetric, i.e., $H(x, \widetilde x) = H(\widetilde x, x)$ for every $(x, \widetilde x) \in X \times X$. 
\item\label{Hypo-J} The function $X \times X \ni (x, \widetilde x) \mapsto L(x) + L(\widetilde x) + H(x, \widetilde x) \in \overline{\mathbb R}_+$ is lower semicontinuous.
\item\label{Hypo-domain} There exists $\kappa > 0$ such that, for every $y \in Y$, there exists $x \in X$ with $\pi(x) = y$ and $L(x) \leq \kappa$.
\item\label{Hypo-compact} For every $\kappa > 0$, the set $\{x \in X \suchthat L(x) \leq \kappa\}$ is compact.
\item\label{Hypo-H-leq-L} There exists $C > 0$ such that $H(x, \widetilde x) \leq C (L(x) + L(\widetilde x) + 1)$ for every $(x, \widetilde x) \in X \times X$.
\end{hypothesisNAG}

Our main goals concerning $\NAGall$ are to show that \begin{enumerate*}[label={(\roman*)}] \item it is a potential game, i.e., its equilibria can be obtained as minimizers of a certain potential $\mathcal J$; and \item the potential $\mathcal J$ admits a minimizer, and hence $\NAGall$ admits an equilibrium.\end{enumerate*}

Let us introduce some notation to be used in the sequel. We define $J\colon X \times X \to \overline{\mathbb R}_+$ by
\begin{equation}
\label{eq:def-J}
J(x, \widetilde x) = L(x) + L(\widetilde x) + H(x, \widetilde x),
\end{equation}
and we define the functions $\mathcal L\colon \mathcal P(X) \to \overline{\mathbb R}_+$, $\mathcal H\colon \mathcal P(X) \times \mathcal P(X) \to \overline{\mathbb R}_+$, and $\mathcal J\colon \mathcal P(X) \to \overline{\mathbb R}_+$ by
\begin{subequations}
\label{eq:def-mathcal}
\begin{align}
\mathcal L(Q) & = \int_X L(x) \diff Q(x), \label{eq:def-mathcal-L} \\
\mathcal H(Q, \widetilde Q) & = \int_{X \times X} H(x, \widetilde x) \diff(Q \otimes \widetilde Q)(x, \widetilde x), \label{eq:def-mathcal-H} \\
\mathcal J(Q) & = \int_{X \times X} J(x, \widetilde x) \diff(Q \otimes Q)(x, \widetilde x). \label{eq:def-mathcal-J}
\end{align}
\end{subequations}
Given $m_0 \in \mathcal P(Y)$, we also set
\begin{equation}
\label{eq:def-Pm0X}
\mathcal P_{m_0}(X) = \{Q \in \mathcal P(X) \suchthat \pi_{\#} Q = m_0\}.
\end{equation}

\begin{remark}
\label{remk:Pm0X-closed}
Under \ref{Hypo-pi}, the set $\mathcal P_{m_0}(X)$ is a closed subset of $\mathcal P(X)$.
\end{remark}

\begin{remark}\label{Rem: link-cal L,H,J and F}
According to the definitions of $F$, $\mathcal{L}$, $\mathcal{H}$, and $\mathcal{J}$, from \eqref{eq:abstract-cost} and \eqref{eq:def-mathcal}, one can easily check that $\mathcal{J}(Q) = 2\mathcal{L}(Q) + \mathcal{H}(Q, Q)$ and $\int_{X} F(x, Q) \diff Q(x) = \mathcal{L}(Q) + \mathcal{H}(Q, Q)$, for every $Q \in \mathcal{P}(X)$. In particular, $\mathcal J(Q) < +\infty$ if and only if $\int_{X} F(x, Q) \diff Q(x) < +\infty$.
\end{remark}

\begin{remark}
\label{remk:not-convex}
The function \(\mathcal J\) is not necessarily convex and, in particular, it is not convex for the models motivating this work. Indeed, since \(Q \mapsto \mathcal L(Q)\) depends linearly on \(Q\), \(\mathcal J\) is convex if and only if \(Q \mapsto \mathcal H(Q, Q)\) is convex. A straightforward computation shows that, for every \(Q_1\) and \(Q_2\) in \(\mathcal P(X)\) and \(\zeta \in [0, 1]\), we have
\begin{multline*}
\mathcal H(\zeta Q_1 + (1 - \zeta) Q_2, \zeta Q_1 + (1 - \zeta) Q_2) - \zeta \mathcal H(Q_1, Q_1) - (1 - \zeta) \mathcal H(Q_2, Q_2) \\ = - \zeta (1 - \zeta) \int_{X \times X} H(x, \widetilde x) \diff((Q_1 - Q_2) \otimes (Q_1 - Q_2)) (x, \widetilde x),
\end{multline*}
so that \(\mathcal J\) is convex if and only if
\begin{equation}
\label{eq:cns-convex}
\int_{X \times X} H(x, \widetilde x) \diff((Q_1 - Q_2) \otimes (Q_1 - Q_2)) (x, \widetilde x) \geq 0
\end{equation}
for every \(Q_1\) and \(Q_2\) in \(\mathcal P(X)\), and \(\mathcal J\) is strictly convex if and only if
\begin{equation}
\label{eq:cns-strict-convex}
\int_{X \times X} H(x, \widetilde x) \diff((Q_1 - Q_2) \otimes (Q_1 - Q_2)) (x, \widetilde x) > 0
\end{equation}
for every \(Q_1\) and \(Q_2\) in \(\mathcal P(X)\) with \(Q_1 \neq Q_2\).

Assuming \(H\) to be defined as in Remark~\ref{remk:concrete} and the function \(h\) to be that of \eqref{eq:intro-cost-CS}, we have in particular that \(H(x, x) = 0\) for every \(x \in X\) with \(L(x) < +\infty\). Assuming that \(H\) takes other values than \(0\) and \(+\infty\), there exist \(x_1\) and \(x_2\) in \(X\) with \(x_1 \neq x_2\) such that \(H(x_1, x_2) \in (0, +\infty)\), and in particular \(L(x_1)\) and \(L(x_2)\) are both finite. Then, using \ref{Hypo-H-symmetric}, we have, for \(Q_1 = \delta_{x_1}\) and \(Q_2 = \delta_{x_2}\),
\[
\int_{X \times X} H(x, \widetilde x) \diff((Q_1 - Q_2) \otimes (Q_1 - Q_2)) (x, \widetilde x) = - 2 H(x_1, x_2) < 0,
\]
so that \eqref{eq:cns-convex} is not satisfied, and hence \(\mathcal J\) is not convex.

Note that \eqref{eq:cns-convex} and \eqref{eq:cns-strict-convex} are positive (semi-)definite-type assumptions. Indeed, when \(X\) is the finite set \(\{1, \dotsc, N\}\), the function \(H\) can be identified with the symmetric matrix \((H(i, j))_{i, j \in \{1, \dotsc, N\}}\) (which we also denote by \(H\)), and thus \eqref{eq:cns-convex} (respectively, \eqref{eq:cns-strict-convex}) reduces to \(\scalprod{q, H q} \geq 0\) for every \(q \in \mathbb R^N\) (respectively, \(\scalprod{q, H q} > 0\) for every \(q \in \mathbb R^N \setminus \{0\}\)) with \(\sum_{k = 1}^N q_k = 0\).
\end{remark}

\subsection{First properties}
\label{sec:first-properties}

Let us now establish some elementary properties of the functions introduced in \eqref{eq:def-mathcal}. We start with the following consequence of \ref{Hypo-H-leq-L}, whose proof is immediate.

\begin{lemma}
\label{lemm:H-leq-L}
Assume that \ref{Hypo-XY-Polish}, \ref{Hypo-LH}, and \ref{Hypo-H-leq-L} are satisfied. Let $C > 0$ be the constant from \ref{Hypo-H-leq-L} and $\mathcal L$ and $\mathcal H$ be defined as in \eqref{eq:def-mathcal-L} and \eqref{eq:def-mathcal-H}. Then, for every $(Q, \widetilde Q) \in \mathcal P(X) \times \mathcal P(X)$, we have
\[\mathcal H(Q, \widetilde Q) \leq C(\mathcal L(Q) + \mathcal L(\widetilde Q) + 1).\]
\end{lemma}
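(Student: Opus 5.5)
The plan is to integrate the pointwise bound from \ref{Hypo-H-leq-L} against the product measure $Q \otimes \widetilde Q$. Since every function involved takes values in $\overline{\mathbb R}_+$ and is Borel measurable, all integrals are well defined in $[0, +\infty]$, so no integrability hypotheses are needed. We simply treat the value $+\infty$ with the usual conventions.

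First I would check measurability. $H$ is Borel on $X \times X$ by \ref{Hypo-LH}. The maps $(x, \widetilde x) \mapsto L(x)$ and $(x, \widetilde x) \mapsto L(\widetilde x)$ are compositions of the lower semicontinuous (hence Borel) function $L$ with continuous projections, so they are Borel as well. Because $X$ is Polish by \ref{Hypo-XY-Polish}, the Borel $\sigma$-algebra of $X \times X$ coincides with the product $\sigma$-algebra. Hence $Q \otimes \widetilde Q$ is defined on the Borel sets and these functions are measurable for it.

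Next, integrating the inequality $H(x, \widetilde x) \leq C(L(x) + L(\widetilde x) + 1)$ over $X \times X$ with respect to $Q \otimes \widetilde Q$ uses monotonicity and linearity of the integral for nonnegative functions. This gives
\[
\mathcal H(Q, \widetilde Q) \leq C\left(\int_{X\times X} L(x)\diff(Q\otimes\widetilde Q) + \int_{X\times X} L(\widetilde x)\diff(Q\otimes\widetilde Q) + 1\right).
\]
By Tonelli's theorem, or directly from the marginals of the product measure together with the fact that $\widetilde Q(X) = Q(X) = 1$, the first integral equals $\mathcal L(Q)$ and the second equals $\mathcal L(\widetilde Q)$. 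This yields the claim.

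I do not expect a genuine obstacle. The only point requiring care is the measurability and product-$\sigma$-algebra issue, which Polishness settles, and the convention that the inequality holds trivially when the right-hand side is $+\infty$.
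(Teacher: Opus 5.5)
Your proposal is correct and matches the paper, which calls the proof immediate: it amounts to integrating the pointwise bound of \ref{Hypo-H-leq-L} against $Q \otimes \widetilde Q$ and identifying the two $L$-terms with $\mathcal L(Q)$ and $\mathcal L(\widetilde Q)$ via the marginals. Your remarks on Borel measurability and the $[0,+\infty]$ conventions are exactly the routine checks this requires.
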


As a consequence of Remark~\ref{Rem: link-cal L,H,J and F} and Lemma~\ref{lemm:H-leq-L}, we also immediately obtain the following result.

\begin{corollary}
\label{coro:dom-L-J}
Assume that \ref{Hypo-XY-Polish}, \ref{Hypo-LH}, and \ref{Hypo-H-leq-L} are satisfied. Let $J$, $\mathcal L$, and $\mathcal J$ be defined as in \eqref{eq:def-J}, \eqref{eq:def-mathcal-L} and \eqref{eq:def-mathcal-J}. Then $\dom \mathcal L = \dom \mathcal J$.
\end{corollary}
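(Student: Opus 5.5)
We need to show that $\dom \mathcal L = \dom \mathcal J$, i.e., that $\mathcal L(Q) < +\infty$ if and only if $\mathcal J(Q) < +\infty$ for $Q \in \mathcal P(X)$. The plan is to sandwich $\mathcal J$ between two multiples of $\mathcal L$ (plus a constant), using the identity from Remark~\ref{Rem: link-cal L,H,J and F} for one side and Lemma~\ref{lemm:H-leq-L} for the other.

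First, we check that the identity $\mathcal J(Q) = 2\mathcal L(Q) + \mathcal H(Q, Q)$ from Remark~\ref{Rem: link-cal L,H,J and F} is legitimate in $\overline{\mathbb R}_+$. All integrands are nonnegative and Borel measurable: $L$ is lower semicontinuous by \ref{Hypo-LH}, hence Borel, and $H$ is Borel. So Tonelli's theorem applies to $\int_{X\times X} J \diff(Q\otimes Q)$. It splits the integral into $\int L(x)\diff Q(x) + \int L(\widetilde x)\diff Q(\widetilde x) + \mathcal H(Q,Q)$, and no $\infty-\infty$ ambiguity can arise.

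For the inclusion $\dom\mathcal J \subset \dom\mathcal L$: since $\mathcal H(Q,Q)\ge 0$, the identity gives $\mathcal J(Q) \ge 2\mathcal L(Q)$. Hence $\mathcal J(Q)<+\infty$ implies $\mathcal L(Q)<+\infty$.

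For the inclusion $\dom\mathcal L \subset \dom\mathcal J$: apply Lemma~\ref{lemm:H-leq-L} with $\widetilde Q = Q$ to get $\mathcal H(Q,Q) \le C(2\mathcal L(Q)+1)$. This yields
\[
\mathcal J(Q) \le (2+2C)\mathcal L(Q) + C,
\]
which is finite whenever $\mathcal L(Q)$ is finite.

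Combining the two bounds gives the equality of domains. There is no real obstacle here; the only point needing care is the nonnegativity and measurability of the integrands, which is what justifies splitting the integrals in the first step.
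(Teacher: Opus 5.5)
Your proof is correct and is exactly the argument the paper intends: it presents the corollary as an immediate consequence of the identity $\mathcal J(Q) = 2\mathcal L(Q) + \mathcal H(Q,Q)$ from Remark~\ref{Rem: link-cal L,H,J and F}, which gives $\mathcal J \geq 2\mathcal L$, and of Lemma~\ref{lemm:H-leq-L}, which gives $\mathcal J \leq (2+2C)\mathcal L + C$. Your check that the integrands are nonnegative and Borel, so the integral can be split without ambiguity, is a bit of care the paper leaves implicit.
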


\begin{remark}\label{Rem: extend H on span}
From Lemma~\ref{lemm:H-leq-L}, we deduce that the function $\mathcal{H}\colon\dom\mathcal{L} \times \dom\mathcal{L} \to \mathbb R_+$ can be extended by bilinearity to a unique bilinear function, still denoted by $\mathcal{H}$, defined in $\Span(\dom\mathcal{L}) \allowbreak \times \Span(\dom\mathcal{L})$ and taking values in $\mathbb R$, where the linear span is taken in the space of signed measures on $X$.
\end{remark}

In order to provide additional properties of $\dom \mathcal J$, we first establish the following preliminary result, which states that, in \ref{Hypo-domain}, the element $x \in X$ can be selected as a Borel-measurable function of $y \in Y$.

\begin{lemma}\label{PhiBorel}
Assume that \ref{Hypo-XY-Polish}--\ref{Hypo-LH}, \ref{Hypo-domain}, and \ref{Hypo-compact} are satisfied and let $\kappa > 0$ be as in \ref{Hypo-domain}. Then there exists a Borel measurable function $\Phi\colon Y \to X$ such that, for every $y \in Y$, we have $\pi(\Phi(y)) = y$ and $L(\Phi(y)) \leq \kappa$.
\end{lemma}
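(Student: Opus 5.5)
We will obtain $\Phi$ from a measurable selection theorem applied to the set-valued map $y \mapsto S(y) = \{x \in X \suchthat \pi(x) = y,\ L(x) \leq \kappa\}$.

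\emph{Step 1: properties of $S(y)$.} By \ref{Hypo-domain}, every $S(y)$ is nonempty. The set $K = \{x \in X \suchthat L(x) \leq \kappa\}$ is compact by \ref{Hypo-compact}, and closed since $L$ is lower semicontinuous by \ref{Hypo-LH}. Since $\pi$ is continuous by \ref{Hypo-pi}, the set $\pi^{-1}(y)$ is closed. Hence $S(y) = K \cap \pi^{-1}(y)$ is a nonempty compact subset of $X$ for every $y \in Y$.

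\emph{Step 2: measurability of $S$.} We will use the Kuratowski--Ryll-Nardzewski theorem, which requires $S$ to be weakly measurable: for every open $U \subset X$, the set $\{y \suchthat S(y) \cap U \neq \emptyset\}$ must be Borel. This set equals $\pi(K \cap U)$. Since $X$ is metrizable, $U$ is a countable union of closed sets $F_n$. Each $K \cap F_n$ is compact, so $\pi(K \cap F_n)$ is compact, hence closed. Therefore $\pi(K \cap U) = \bigcup_n \pi(K \cap F_n)$ is an $F_\sigma$ set, and in particular Borel.

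Alternatively, $S$ is upper hemicontinuous with closed graph. Indeed, its graph $\{(y,x) \suchthat x \in K,\ \pi(x) = y\}$ is closed in $Y \times X$, and its values lie in the compact set $K$. This also gives measurability.

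\emph{Step 3: conclusion.} Kuratowski--Ryll-Nardzewski then yields a Borel map $\Phi\colon Y \to X$ with $\Phi(y) \in S(y)$ for all $y$. By definition of $S$, this means $\pi(\Phi(y)) = y$ and $L(\Phi(y)) \leq \kappa$.

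The main obstacle is Step 2, namely establishing weak measurability with the right conventions. This is exactly where compactness of $K$ is essential: without it, projections of closed sets would only be analytic, not Borel. Compactness makes $\pi(K \cap F_n)$ compact, and the argument is routine once that is noted.
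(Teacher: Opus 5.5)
Your proof is correct and follows the paper's route: the same multifunction $y \mapsto \{x \in X : \pi(x) = y,\ L(x) \leq \kappa\}$, to which the same Kuratowski--Ryll-Nardzewski selection theorem is applied (the paper cites it as Aubin--Frankowska, Theorem~8.1.3). The only difference is the measurability step. The paper deduces it from upper semicontinuity (closed graph with values in the compact set $K$) plus a Castaing--Valadier result passing from closed to open sets, while you check it directly via $\{y : S(y) \cap U \neq \emptyset\} = \bigcup_n \pi(K \cap F_n)$, a self-contained version of the same argument.
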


\begin{proof}
Let us consider the set valued map $\mathcal G\colon Y \rightrightarrows X$ defined by
\[
\mathcal{G}(y) = \bigl\{x \in X \mathrel{\big\vert} \pi(x) = y \text{ and } L(x) \leq \kappa  \bigr\}.
\]
Note that, by \ref{Hypo-domain}, we have $\mathcal G(y) \neq \emptyset$ for every $y \in Y$. Since $\pi$ is continuous and $L$ is lower semicontinuous, one immediately deduces that the graph of $\mathcal G$ is closed and, in particular, $\mathcal G(y)$ is closed for every $y \in Y$. In addition, $\mathcal G$ takes values in the set $\{x \in X \suchthat L(x) \leq \kappa\}$, which is compact thanks to \ref{Hypo-compact}. Hence, by \cite[Proposition~1.4.8]{Aubin2009Set}, we get that $\mathcal{G}$ is upper semicontinuous and, by \cite[Proposition~1.4.4]{Aubin2009Set}, the set $\mathcal{G}^{-1}(A) = \{y \in Y \suchthat \mathcal{G}(y) \cap A \neq \emptyset\}$ is closed for every closed set $A \subset X$. Hence, by \cite[Proposition~III.11]{Castaing1977Convex}, $\mathcal G^{-1}(B)$ is open for every open set $B \subset X$, and thus $\mathcal{G}$ is measurable. Now, by applying \cite[Theorem~8.1.3]{Aubin2009Set}, we conclude that the set-valued map $\mathcal{G}$ admits a Borel-measurable selection, which is our desired function $\Phi$. 
\end{proof}

\begin{remark}
Notice that it is not possible to apply \cite[Theorem~8.1.4]{Aubin2009Set} in order to extract directly a Borel measurable selection from the fact that $\mathcal G$ has a closed graph since the measure $m_0$ is not complete in the measurable space $\overline \Omega$ endowed with its Borel $\sigma$-algebra.  
\end{remark}

As a consequence of Lemma~\ref{PhiBorel}, we deduce the following result on $\dom \mathcal J$.

\begin{corollary}
\label{coro:dom-J-convex}
Assume that \ref{Hypo-XY-Polish}--\ref{Hypo-LH} and \ref{Hypo-domain}--\ref{Hypo-H-leq-L} are satisfied, and let $J$ and $\mathcal J$ be given by \eqref{eq:def-J} and \eqref{eq:def-mathcal-J}. Then, for every $m_0 \in \mathcal P(Y)$, the set $\dom \mathcal J \cap \mathcal P_{m_0}(X)$ is nonempty and convex.
\end{corollary}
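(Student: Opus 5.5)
Nonemptiness follows from the Borel selection of Lemma~\ref{PhiBorel}, and convexity from linearity of $\mathcal L$ together with Corollary~\ref{coro:dom-L-J}. For nonemptiness, let $\Phi\colon Y \to X$ be the Borel map given by Lemma~\ref{PhiBorel}, and set $Q = \Phi_{\#} m_0 \in \mathcal P(X)$. Since $\pi \circ \Phi = \mathrm{id}_Y$, we get $\pi_{\#} Q = (\pi\circ\Phi)_{\#} m_0 = m_0$, so $Q \in \mathcal P_{m_0}(X)$. Moreover,
\[
\mathcal L(Q) = \int_Y L(\Phi(y)) \diff m_0(y) \leq \kappa < +\infty,
\]
so $Q \in \dom \mathcal L$. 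By Corollary~\ref{coro:dom-L-J}, $\dom\mathcal L = \dom\mathcal J$, hence $Q \in \dom\mathcal J \cap \mathcal P_{m_0}(X)$. If one prefers not to invoke the corollary, Lemma~\ref{lemm:H-leq-L} directly gives $\mathcal J(Q) = 2\mathcal L(Q) + \mathcal H(Q,Q) \leq (2+2C)\kappa + C$.

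For convexity, the set $\mathcal P_{m_0}(X)$ is convex: pushforward is linear in the measure, so if $\pi_{\#}Q_1 = \pi_{\#}Q_2 = m_0$ and $\zeta \in [0,1]$, then $\pi_{\#}(\zeta Q_1 + (1-\zeta)Q_2) = m_0$. Next, $\dom \mathcal J = \dom\mathcal L$ is convex because $\mathcal L$ is affine on $\mathcal P(X)$ with values in $\overline{\mathbb R}_+$: since $L \geq 0$, we have $\mathcal L(\zeta Q_1 + (1-\zeta)Q_2) = \zeta\mathcal L(Q_1) + (1-\zeta)\mathcal L(Q_2)$, which is finite when both terms are finite. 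The intersection of two convex sets is convex.

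There is no real obstacle once Lemma~\ref{PhiBorel} is available; the measurability of $\Phi$ is exactly what is needed for $\Phi_{\#}m_0$ to be a well-defined Borel probability measure. One should not try to prove convexity of $\dom\mathcal J$ directly through $\mathcal J$, which is not convex by Remark~\ref{remk:not-convex}; the detour through $\dom \mathcal L$ avoids this.
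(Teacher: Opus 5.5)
Your proposal is correct and follows essentially the same route as the paper: it reduces to $\dom\mathcal L \cap \mathcal P_{m_0}(X)$ via Corollary~\ref{coro:dom-L-J}, gets nonemptiness from $Q = \Phi_{\#} m_0$ with $\Phi$ from Lemma~\ref{PhiBorel}, and gets convexity from the linearity of $\mathcal L$ and of the pushforward. Your optional direct bound $\mathcal J(Q) \leq (2+2C)\kappa + C$ via Lemma~\ref{lemm:H-leq-L} is also valid.
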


\begin{proof}
By Corollary~\ref{coro:dom-L-J}, it suffices to show that $\dom \mathcal L \cap \mathcal P_{m_0}(X)$ is nonempty and convex, where $\mathcal L$ is given by \eqref{eq:def-mathcal-L}. Let $\Phi\colon Y \to X$ be the map from Lemma~\ref{PhiBorel} and define $Q = \Phi_{\#} m_0$. Then $\pi_{\#} Q = m_0$ and an immediate computation shows that $\mathcal L(Q) \leq \kappa < +\infty$, where $\kappa$ is the constant from \ref{Hypo-domain}. Hence $Q \in \dom \mathcal L \cap \mathcal P_{m_0}(X)$.

Note that $\mathcal P_{m_0}(X)$ is clearly convex from \eqref{eq:def-Pm0X}, and $\mathcal L$ is linear, implying that $\mathcal{L}(t Q + (1-t)\widetilde Q) = t\mathcal{L}(Q) + (1-t) \mathcal{L}(\widetilde Q)$ for every $Q, \widetilde Q \in \mathcal P(X)$ and $t \in [0, 1]$. Hence $\dom \mathcal L$ is convex, yielding that $\dom \mathcal L \cap \mathcal P_{m_0}(X)$ is convex.
\end{proof}

We will also need in the sequel the following property of the function $F$ from \eqref{eq:abstract-cost}.

\begin{lemma}
\label{lemm:F-lsc}
Assume that \ref{Hypo-XY-Polish}, \ref{Hypo-LH}, and \ref{Hypo-J} are satisfied, and let $\mathcal L$ be the function defined in \eqref{eq:def-mathcal-L}. For every $Q \in \dom \mathcal L$, the function $x \mapsto F(x, Q)$ is lower semicontinuous.
\end{lemma}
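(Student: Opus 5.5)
We want to show that $x \mapsto F(x,Q) = L(x) + \int_X H(x,\widetilde x)\diff Q(\widetilde x)$ is lower semicontinuous when $\mathcal L(Q) < +\infty$. The plan is to exploit \ref{Hypo-J}, which gives lower semicontinuity only of the combination $J(x,\widetilde x) = L(x) + L(\widetilde x) + H(x,\widetilde x)$. We therefore integrate $J$ in $\widetilde x$ against $Q$ and then remove the finite constant $\mathcal L(Q)$.

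First, for every $x \in X$ we have
\[
\int_X J(x,\widetilde x)\diff Q(\widetilde x) = L(x) + \mathcal L(Q) + \int_X H(x,\widetilde x)\diff Q(\widetilde x) = F(x,Q) + \mathcal L(Q).
\]
All terms are nonnegative (possibly $+\infty$), so this splitting of the integral is legitimate. Since $\mathcal L(Q) < +\infty$, the function $F(\cdot,Q)$ equals $G(\cdot) - \mathcal L(Q)$, where $G(x) = \int_X J(x,\widetilde x)\diff Q(\widetilde x)$. Subtracting a finite constant preserves lower semicontinuity, including at points where $G=+\infty$. It thus suffices to show that $G$ is lower semicontinuous.

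Second, take a sequence $x_n \to x$ in $X$; sequences suffice because $X$ is metrizable by \ref{Hypo-XY-Polish}. For each fixed $\widetilde x$, the pairs $(x_n,\widetilde x)$ converge to $(x,\widetilde x)$ in $X\times X$. By \ref{Hypo-J}, this gives $\liminf_n J(x_n,\widetilde x) \ge J(x,\widetilde x)$. The functions $\widetilde x \mapsto J(x_n,\widetilde x)$ are nonnegative and Borel measurable, by \ref{Hypo-LH} or simply by lower semicontinuity. Fatou's lemma then gives
\[
\liminf_n G(x_n) \ge \int_X \liminf_n J(x_n,\widetilde x)\diff Q(\widetilde x) \ge G(x),
\]
which is the desired lower semicontinuity.

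No step is a real obstacle. The one point to be careful about is not to claim lower semicontinuity of $H$ itself, which is not assumed. This is exactly why $J$ must be used and why the hypothesis $Q \in \dom\mathcal L$ is needed, namely to subtract $\mathcal L(Q)$ without creating $\infty - \infty$.
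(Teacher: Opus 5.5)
Your proof is correct and follows essentially the same route as the paper. Both write $F(x,Q)+\mathcal L(Q)=\int_X J(x,\widetilde x)\diff Q(\widetilde x)$, use $\mathcal L(Q)<+\infty$ to reduce to lower semicontinuity of this integral, and conclude from \ref{Hypo-J} and Fatou's lemma along a convergent sequence.
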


\begin{proof}
Note that, defining $J$ as in \eqref{eq:def-J}, we have
\[
F(x, Q) + \mathcal L(Q) = \int_X J(x, \widetilde x) \diff Q(\widetilde x),
\]
and thus, since $\mathcal L(Q) < +\infty$, it suffices to prove the lower semicontinuity of the function $x \mapsto \int_X J(x, \widetilde x) \diff Q(\widetilde x)$.

Let $(x_n)_{n \in \mathbb N}$ be a sequence in $X$ converging to some $x \in X$. Since $J$ is lower semicontinuous thanks to \ref{Hypo-J}, we have, for every $\widetilde x \in X$,
\[
J(x, \widetilde x) \leq \liminf_{n \to +\infty} J(x_n, \widetilde x).
\]
Then, Fatou's lemma shows that
\[
\int_X J(x, \widetilde x) \diff Q(\widetilde x) \leq \int_X \liminf_{n \to +\infty} J(x_n, \widetilde x) \diff Q(\widetilde x) \leq \liminf_{n \to +\infty} \int_X J(x_n, \widetilde x) \diff Q(\widetilde x),
\]
yielding the conclusion.
\end{proof}

Note that Lemma~\ref{lemm:F-lsc} also yields the following result on the existence of optimizers for $F(\cdot, Q)$.

\begin{corollary}
\label{coro:exist-optimal-FxQ}
Assume that \ref{Hypo-XY-Polish}--\ref{Hypo-LH} and \ref{Hypo-J}--\ref{Hypo-H-leq-L} are satisfied and let $\mathcal L$ be the function defined in \eqref{eq:def-mathcal-L}. For every $Q \in \dom \mathcal L$ and $y \in Y$, there exists $x \in X$ with $\pi(x) = y$ and such that
\[
F(x, Q) = \inf_{\substack{z \in X \\ \pi(z) = y}} F(z, Q).
\]
\end{corollary}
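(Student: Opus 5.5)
We argue by the direct method on the fibre $\pi^{-1}(y)$, using the lower semicontinuity of $F(\cdot, Q)$ from Lemma~\ref{lemm:F-lsc} and the compactness of sublevel sets of $L$ from \ref{Hypo-compact}. Fix $Q \in \dom \mathcal L$ and $y \in Y$, and set $m = \inf\{F(z, Q) \suchthat z \in X,\ \pi(z) = y\}$.

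\textbf{Step 1: $m$ is finite.} By \ref{Hypo-domain}, there exists $z_0 \in X$ with $\pi(z_0) = y$ and $L(z_0) \leq \kappa$. By \ref{Hypo-H-leq-L}, integrating in $\widetilde x$ with respect to $Q$,
\[
F(z_0, Q) \leq L(z_0) + C(L(z_0) + \mathcal L(Q) + 1) \leq \kappa + C(\kappa + \mathcal L(Q) + 1) < +\infty.
\]
Hence $m < +\infty$. Since $F \geq 0$, we also have $m \geq 0$.

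\textbf{Step 2: a minimizing sequence with bounded $L$.} Take a sequence $(z_n)_{n \in \mathbb N}$ with $\pi(z_n) = y$ and $F(z_n, Q) \to m$. Discarding finitely many terms, we may assume $F(z_n, Q) \leq m + 1$ for every $n$. Since $H \geq 0$, we have $L(z_n) \leq F(z_n, Q) \leq m + 1$. Thus every $z_n$ lies in the set $\{x \in X \suchthat L(x) \leq m + 1\}$, which is compact by \ref{Hypo-compact}. As $X$ is Polish, hence metrizable, this compactness is sequential, so a subsequence (not relabeled) converges to some $x \in X$.

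\textbf{Step 3: passing to the limit.} By continuity of $\pi$ from \ref{Hypo-pi}, $\pi(x) = \lim_{n \to +\infty} \pi(z_n) = y$, so $x$ is admissible. By Lemma~\ref{lemm:F-lsc}, which applies because $Q \in \dom \mathcal L$, we get
\[
F(x, Q) \leq \liminf_{n \to +\infty} F(z_n, Q) = m.
\]
Since $x$ is admissible, also $F(x, Q) \geq m$, so $F(x, Q) = m$ and $x$ is the required minimizer.

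\textbf{Main point of care.} The only delicate point is obtaining the uniform bound on $L(z_n)$. This bound requires $m < +\infty$, which in turn relies on \ref{Hypo-domain} together with \ref{Hypo-H-leq-L} and on $Q \in \dom \mathcal L$. Once that bound is available, the rest is the routine direct method.
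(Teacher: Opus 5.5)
Your proposal is correct and follows essentially the same route as the paper's proof: you show the infimum is finite via \ref{Hypo-domain} and \ref{Hypo-H-leq-L}, get a uniform bound on $L$ along a minimizing sequence so that \ref{Hypo-compact} gives a convergent subsequence, and pass to the limit using continuity of $\pi$ and Lemma~\ref{lemm:F-lsc}. Your explicit use of the sublevel $m+1$ is just a more concrete version of the paper's ``the sequence is bounded'' step.
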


\begin{proof}
Let $(x_n)_{n \in \mathbb N}$ be a minimizing sequence for $F(\cdot, Q)$, i.e., $(x_n)_{n \in \mathbb N}$ is a sequence in $X$ with $\pi(x_n) = y$ for every $n \in \mathbb N$ and
\begin{equation}
\label{eq:minimizing-sequence-for-F}
\lim_{n \to +\infty} F(x_n, Q) = \inf_{\substack{z \in X \\ \pi(z) = y}} F(z, Q).
\end{equation}
By \ref{Hypo-domain}, there exists $z \in X$ with $\pi(z) = y$ such that $L(z) \leq \kappa$, where $\kappa$ is the constant from \ref{Hypo-domain}. Then, by \ref{Hypo-H-leq-L}, we have $F(z, Q) \leq (C + 1) L(z) + C \mathcal L(Q) + C \leq (C + 1) \kappa + C \mathcal L(Q) + C$, where $C$ is the constant from \ref{Hypo-H-leq-L}. Hence, the limit in the left-hand side in \eqref{eq:minimizing-sequence-for-F} is finite, and we assume, up to removing finitely many elements from the sequence, that $F(x_n, Q) < +\infty$ for every $n \in \mathbb N$, and thus the sequence $(F(x_n, Q))_{n \in \mathbb N}$ is bounded. Thus, the sequence $(L(x_n))_{n \in \mathbb N}$ is also bounded and, by \ref{Hypo-compact}, this implies that, up to extracting a subsequence, there exists $x \in X$ such that $x_n \to x$ as $n \to +\infty$. By \ref{Hypo-pi}, we obtain that $\pi(x) = y$ and, by \eqref{eq:minimizing-sequence-for-F} and Lemma~\ref{lemm:F-lsc}, we obtain that
\[
F(x, Q) \leq \lim_{n \to +\infty} F(x_n, Q) = \inf_{\substack{z \in X \\ \pi(z) = y}} F(z, Q) \leq F(x, Q),
\]
yielding the conclusion.
\end{proof}

\subsection{Potential game}
\label{sec:potential}

Our next goal is to prove that $\NAGall$ is a potential game with potential given by the function $\mathcal J$ from \eqref{eq:def-mathcal-J}. For games with finitely many players, a potential is a function such that, when a single player changes their strategy, the variation in the cost of that player is equal to (or proportional to) the variation of the potential. In particular, in potential games, equilibria can be found as minimizers of the potential. To adapt these ideas to our setting with a continuum of players, one needs to provide a suitable notion of ``variation'' of the potential $\mathcal J$ as a player changes their trajectory. It turns out that the good notion is that of differentiability of $\mathcal J$, in the sense of the following definition.

\begin{definition}
\label{def:deriv-J}
Assume that \ref{Hypo-LH} is satisfied and let $\mathcal J$ be defined by \eqref{eq:def-mathcal-J}. Given $Q_0 \in \dom\mathcal J$, we say that $\mathcal J$ is \emph{differentiable at $Q_0$} if there exists a Borel-measurable function $G\colon X \to \overline{\mathbb R}$ such that, for every $Q \in \dom \mathcal J$, $G$ is $(Q - Q_0)$-integrable and
\begin{equation}
\label{J is differentiable}
\lim_{t \to 0^+} \frac{\mathcal J(Q_0 + t(Q - Q_0)) - \mathcal J(Q_0)}{t} = \int_X G(x) \diff (Q - Q_0)(x).
\end{equation}
In this case, we denote $G$ by $\frac{\delta\mathcal J}{\delta Q}(Q_0)$ and we write the right-hand side of \eqref{J is differentiable} as $\scalprod{\frac{\delta\mathcal J}{\delta Q}(Q_0),\allowbreak Q - Q_0}$.
\end{definition}

\begin{remark}
\label{remk:deriv-measure}
There are several notions of derivative for functions defined in the space of probability measures, and the one presented in Definition~\ref{def:deriv-J} resembles the one from \cite[Definition~2.2.1]{Cardaliaguet2019Master} and \cite[Definition~5.43]{Carmona2018ProbabilisticI}, which is widely used in the analysis of mean field games. The main difference is that the notion we provide above requires less regularity of $(Q_0, x) \mapsto \frac{\delta\mathcal J}{\delta Q}(Q_0)(x)$, which is not necessarily continuous, and might even fail to be defined for some $Q_0 \in \dom \mathcal J$. This weaker notion suitably fits our purposes in the sequel. We refer the interested reader to \cite[Chapter~10]{Ambrosio2005Gradient}, \cite[Section~2.2.3 and Appendices~A.1 and A.2]{Cardaliaguet2019Master}, and \cite[Chapter~5]{Carmona2018ProbabilisticI} for other notions of derivation for functions defined in the space of probability measures and further discussion on the relations among those notions.
\end{remark}

Note that, if $G$ is a Borel-measurable function satisfying \eqref{J is differentiable}, then $G + \lambda$ also satisfies \eqref{J is differentiable} for any constant $\lambda \in \mathbb R$, since the integral of a constant with respect to the measure $Q - Q_0$ is zero. Hence, $\frac{\delta\mathcal J}{\delta Q}$ is not uniquely defined. We prove, however, in our next result, that, similarly to the analogous notion of derivative from \cite[Definition~2.2.1]{Cardaliaguet2019Master} and \cite[Definition~5.43]{Carmona2018ProbabilisticI}, $\frac{\delta\mathcal J}{\delta Q}$ is indeed unique up to an additive constant.

\begin{lemma}\label{lem: uniquness up to constant}
Assume that \ref{Hypo-XY-Polish}--\ref{Hypo-LH}, \ref{Hypo-domain}, and \ref{Hypo-compact} are satisfied and let $\mathcal J$ be defined by \eqref{eq:def-mathcal-J}. Let $Q_0 \in \dom \mathcal J$ and assume that $G_1$ and $G_2$ are two Borel measurable functions such that, for every $Q \in \dom \mathcal J$, $G_1$ and $G_2$ are $(Q - Q_0)$-integrable and
\[
\int_{X} G_1(x) \diff (Q - Q_0)(x) = \int_{X} G_2(x) \diff (Q - Q_0)(x).
\]
Then there exists $\lambda \in \mathbb R$ such that $G_1(x) = G_2(x) + \lambda$ for $Q_0$-almost every $x \in X$.
\end{lemma}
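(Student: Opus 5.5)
The idea is to test the identity against conditional measures of $Q_0$. These lie in $\dom\mathcal J$ automatically, and they force $G_1 - G_2$ to have the same average over every set of positive $Q_0$-measure. Set $G = G_1 - G_2$. The assumption gives $\int_X G \diff(Q - Q_0) = 0$ for every $Q \in \dom\mathcal J$. (Once integrability is established below, $G$ is finite $Q_0$-almost everywhere, so the difference makes sense there.)

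\emph{Step 1: admissible test measures.} Let $A \subset X$ be Borel with $Q_0(A) > 0$, and define $Q_A = Q_0(A)^{-1} Q_0|_A$. Since $Q_A \leq Q_0(A)^{-1} Q_0$, we get $Q_A \otimes Q_A \leq Q_0(A)^{-2}\, Q_0 \otimes Q_0$. Because $J \geq 0$, this yields
\[
\mathcal J(Q_A) \leq Q_0(A)^{-2} \mathcal J(Q_0) < +\infty .
\]
Hence $Q_A \in \dom\mathcal J$, and only the definition of $\mathcal J$ and $Q_0 \in \dom \mathcal J$ are used here. By hypothesis, $G_1$ and $G_2$ are $(Q_A - Q_0)$-integrable.

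\emph{Step 2: integrability with respect to $Q_0$.} This is the only slightly delicate point. If every Borel set has $Q_0$-measure $0$ or $1$, then $Q_0$ is a Dirac mass (since $X$ is Polish), and the conclusion is trivial. Otherwise, pick $A$ with $0 < Q_0(A) < 1$. A direct computation gives
\[
Q_A - Q_0 = \bigl(Q_0(A)^{-1} - 1\bigr) Q_0|_A - Q_0|_{A^c}.
\]
Its total variation is therefore bounded below by $c\, Q_0$ with $c = \min\{Q_0(A)^{-1} - 1, 1\} > 0$. Consequently $G_1$ and $G_2$ are $Q_0$-integrable, hence finite $Q_0$-a.e. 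Set $\lambda = \int_X G \diff Q_0 \in \mathbb R$.

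\emph{Step 3: conclusion.} For every Borel $A$ with $Q_0(A) > 0$, Steps 1--2 give
\[
\frac{1}{Q_0(A)} \int_A G \diff Q_0 = \int_X G \diff Q_A = \int_X G \diff Q_0 = \lambda .
\]
Suppose $Q_0(\{G > \lambda + \varepsilon\}) > 0$ for some $\varepsilon > 0$. Taking $A = \{G > \lambda + \varepsilon\}$, the average of $G$ over $A$ exceeds $\lambda + \varepsilon$, a contradiction. The same argument applied to $\{G < \lambda - \varepsilon\}$ rules that set out as well. Letting $\varepsilon \to 0$ along a sequence gives $G = \lambda$ $Q_0$-a.e., that is, $G_1 = G_2 + \lambda$ $Q_0$-a.e.

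I expect the main obstacle to be Step 2, namely extracting genuine $Q_0$-integrability from the hypothesis, which only gives integrability against the signed measures $Q - Q_0$. The choice of $Q_A$ above handles this directly, and the Dirac case is dealt with separately.
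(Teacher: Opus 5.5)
Your proof is correct, and it takes a different route from the paper's.

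\textbf{The paper's route.} The paper tests the identity against Dirac masses. Since $L(x)<+\infty$ for $Q_0$-a.e.\ $x$, it gets $\delta_x\in\dom\mathcal J$ via $\dom\mathcal L=\dom\mathcal J$ (Corollary~\ref{coro:dom-L-J}). It then sets $\lambda=\int_X(G_1-G_2)\diff Q_0$ and reads off $G_1(x)=G_2(x)+\lambda$ pointwise by taking $Q=\delta_x$. This is a one-line argument with no averaging step and no separate Dirac case. However, it relies on two things you do not need.
\begin{enumerate}
\item The inclusion $\dom\mathcal L\subset\dom\mathcal J$ comes from \ref{Hypo-H-leq-L}, which is not among the lemma's hypotheses. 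Without it, $\mathcal J(\delta_x)=2L(x)+H(x,x)$ can be infinite for every $x$ while $\mathcal J(Q_0)<+\infty$. For instance, take $H=+\infty$ on the diagonal and $0$ elsewhere, and $Q_0\in\dom\mathcal L$ atomless, so that the diagonal is $Q_0\otimes Q_0$-null.
\item The $Q_0$-integrability of $G_1,G_2$ is used without comment. It is needed to define $\lambda$ and to split $\int_X G_i\diff(Q-Q_0)$. Under the total-variation reading of ``$(Q-Q_0)$-integrable'', it requires an argument like your Step~2.
\end{enumerate}

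\textbf{Your route.} Your normalized restrictions $Q_A$ sidestep both points. Membership $Q_A\in\dom\mathcal J$ follows from $Q_A\otimes Q_A\le Q_0(A)^{-2}\,Q_0\otimes Q_0$ and $J\ge 0$ alone. The bound $\abs{Q_A-Q_0}\ge c\,Q_0$ then yields the integrability. So your proof works under exactly the stated hypotheses; in fact it only uses that $X$ is Polish and that $J$ is nonnegative and Borel.

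\textbf{Trade-off.} The price is the level-set argument of Step~3 in place of direct pointwise evaluation. In the paper's main results, which assume \ref{Hypo-H-leq-L} anyway, the Dirac-mass argument is the more economical one.
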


\begin{proof}
Let $\mathcal L$ be the function defined in \eqref{eq:def-mathcal-L}. Note that, for $Q_0$-almost every $x \in X$, we have $\delta_x \in \dom \mathcal J$. Indeed, since $Q_0 \in \dom \mathcal J = \dom \mathcal L$ by Corollary~\ref{coro:dom-L-J}, we deduce that $L(x) < +\infty$ for $Q_0$-almost every $x \in X$, and thus $\mathcal L(\delta_x) = L(x) < +\infty$, showing that $\delta_x \in \dom \mathcal L = \dom \mathcal J$.

We now set $\lambda = \int_{X} (G_1 - G_2)(x) \diff Q_0(x)$ and observe that, by assumption, for every $Q \in \dom\mathcal J$, we have
\[
\int_{X} G_1(x) \diff Q(x) = \int_{X} G_2(x) \diff Q(x) + \lambda,
\]
and hence, for $Q_0$-almost every $x \in X$, we can take $Q = \delta_x$ in the above identity to deduce that $G_1(x) = G_2(x) + \lambda$, as required.
\end{proof}

Our next result is an important step in formulating $\NAGall$ as a potential non-atomic game with potential $\mathcal J$, since, under the additional assumption \ref{Hypo-H-symmetric}, it relates the variations of $\mathcal J$, in the sense of Definition~\ref{def:deriv-J}, with the individual cost $F(x, Q)$ of an agent from \eqref{eq:abstract-cost}.

\begin{theorem}\label{thm:nabla J is F}
Assume that \ref{Hypo-XY-Polish}--\ref{Hypo-H-symmetric}, \ref{Hypo-domain}, and \ref{Hypo-compact} are satisfied and let $\mathcal J$ be defined by \eqref{eq:def-mathcal-J}. Then, for every $Q_0 \in \dom\mathcal{J}$, the function $\mathcal{J}$ is differentiable at $Q_0$, with $\frac{\delta \mathcal{J}}{\delta Q}(Q_0)(x) = 2 F(x, Q_0)$ for $Q_0$-almost every $x \in X$.
\end{theorem}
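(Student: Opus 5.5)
The plan is to compute $\mathcal J$ explicitly along the segment $Q_t = Q_0 + t(Q - Q_0) = (1-t)Q_0 + tQ$, for $t \in [0,1]$ and fixed $Q_0, Q \in \dom\mathcal J$. The point is that $J \geq 0$, so every bilinear manipulation can be done in $\overline{\mathbb R}_+$ without integrability concerns. I would take as candidate derivative $G = 2F(\cdot, Q_0)$, defined everywhere on $X$. It is Borel measurable by Tonelli's theorem, since $H$ is Borel and nonnegative by \ref{Hypo-LH}.

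\textbf{Step 1: expansion of $\mathcal J(Q_t)$.} Since $Q_t \otimes Q_t = (1-t)^2 Q_0\otimes Q_0 + t(1-t)(Q_0 \otimes Q + Q \otimes Q_0) + t^2 Q\otimes Q$ as nonnegative measures, integrating $J \geq 0$ gives
\[
\mathcal J(Q_t) = (1-t)^2 A_{00} + t(1-t)(A_{01} + A_{10}) + t^2 A_{11},
\]
where $A_{ij} = \int_{X\times X} J \diff(Q_i \otimes Q_j)$, with $Q_1 = Q$. By \ref{Hypo-H-symmetric}, $J$ is symmetric, so Tonelli yields $A_{01} = A_{10}$. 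We also have $A_{00} = \mathcal J(Q_0) < +\infty$ and $A_{11} = \mathcal J(Q) < +\infty$, whereas a priori $A_{01} \in [0, +\infty]$. Indeed \ref{Hypo-H-leq-L} is not assumed here, so I will not try to prove $A_{01} < +\infty$ but carry the possibly infinite value along.

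\textbf{Step 2: the difference quotient.} Subtracting the finite quantity $A_{00}$ and dividing by $t$ gives
\[
\frac{\mathcal J(Q_t) - \mathcal J(Q_0)}{t} = (t-2)A_{00} + 2(1-t)A_{01} + tA_{11}.
\]
As $t \to 0^+$ this tends to $2A_{01} - 2A_{00}$ in $(-\infty, +\infty]$.

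\textbf{Step 3: identification with $2F(\cdot, Q_0)$.} By Tonelli,
\[
\int_X F(x, Q_0) \diff Q(x) = \mathcal L(Q) + \mathcal H(Q, Q_0) = A_{01} - \mathcal L(Q_0),
\]
and, as in Remark~\ref{Rem: link-cal L,H,J and F}, $\int_X F(x,Q_0)\diff Q_0(x) = A_{00} - \mathcal L(Q_0) < +\infty$. The second integral is finite and $\mathcal L(Q_0) < +\infty$, so $\int_X 2F(x,Q_0)\diff(Q - Q_0)(x)$ is well defined in $(-\infty,+\infty]$. It equals $2A_{01} - 2A_{00}$, which is the limit obtained in Step 2. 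This gives \eqref{J is differentiable} with $G = 2F(\cdot, Q_0)$.

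\textbf{Step 4: conclusion and the main obstacle.} The statement is phrased ``for $Q_0$-almost every $x$'' because of the non-uniqueness discussed before Lemma~\ref{lem: uniquness up to constant}. That lemma (whose hypotheses \ref{Hypo-XY-Polish}--\ref{Hypo-LH}, \ref{Hypo-domain}, \ref{Hypo-compact} are among ours) shows that any other derivative differs from $2F(\cdot,Q_0)$ by a constant $Q_0$-a.e. This is the intended reading, since adding a constant does not change the right-hand side of \eqref{J is differentiable}.

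The main obstacle is the possible infiniteness of $A_{01}$, equivalently of $\mathcal H(Q,Q_0)$, in the absence of \ref{Hypo-H-leq-L}. This forces the expansion to be done with nonnegative integrands, and forces ``$(Q-Q_0)$-integrable'' to be read as allowing the value $+\infty$ for $\int G\diff Q$ while $\int G\diff Q_0$ is finite. I would state this reading explicitly. The other point needing care is that $Q_t$ need not belong to $\dom\mathcal J$ when $A_{01} = +\infty$; the argument above never uses that it does.
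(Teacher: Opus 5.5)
Your proof is correct, and its core is the same computation as the paper's. You expand $\mathcal J$ along $Q_0+t(Q-Q_0)$, let $t\to0^+$, and identify the linear term with $\int_X 2F(x,Q_0)\diff(Q-Q_0)(x)$. When $A_{01}<+\infty$, your quotient $(t-2)A_{00}+2(1-t)A_{01}+tA_{11}$ is just the paper's $2(\mathcal L(Q)+\mathcal H(Q_0,Q))-2(\mathcal L(Q_0)+\mathcal H(Q_0,Q_0))+t\,\mathcal H(Q-Q_0,Q-Q_0)$ regrouped.

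The difference in bookkeeping matters, though. The paper handles the remainder through the bilinear extension of $\mathcal H$ to signed measures (Remark~\ref{Rem: extend H on span}). Both that extension and the finiteness of $\int_X F(x,Q_0)\diff Q(x)$ rest on Lemma~\ref{lemm:H-leq-L}, i.e.\ on \ref{Hypo-H-leq-L}, which is not among the hypotheses of the statement. Your expansion with coefficients in $[0,+\infty]$ needs no such bound.

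Your caveat on the meaning of ``$(Q-Q_0)$-integrable'' is genuinely necessary, not a matter of taste. Take $X=\{0,1\}$, $Y$ a single point, $L\equiv0$, $H(0,0)=H(1,1)=0$, $H(0,1)=H(1,0)=+\infty$, $Q_0=\delta_0$ and $Q=\delta_1$. Then all of \ref{Hypo-XY-Polish}--\ref{Hypo-compact} hold and $\mathcal J(Q_0)=\mathcal J(Q)=0$. Yet $\mathcal J(Q_0+t(Q-Q_0))=+\infty$ for every $t\in(0,1)$, so no $G$ that is integrable in the usual sense can satisfy \eqref{J is differentiable}.

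Conversely, once \ref{Hypo-H-leq-L} is available (as in Theorem~\ref{thm:potential}, where this result is used), your cross term obeys $A_{01}\le(C+1)(\mathcal L(Q_0)+\mathcal L(Q))+C<+\infty$. Then $2F(\cdot,Q_0)$ is integrable against both $Q$ and $Q_0$, and your argument yields the statement in its literal form. You might add that one line.
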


\begin{proof}
Fix $Q_0 \in \dom\mathcal J$ and let $\mathcal L$ and $\mathcal H$ be defined as in \eqref{eq:def-mathcal-L} and \eqref{eq:def-mathcal-H}. For every $Q \in \dom\mathcal J$ and $t \in (0, 1]$, using Remark~\ref{Rem: link-cal L,H,J and F}, we have that
\begin{align*}
\frac{\mathcal{J}(Q_0 + t(Q - Q_0)) - \mathcal{J}(Q_0)}{t} = {} & 2 (\mathcal{L}(Q) + \mathcal{H}(Q_0, Q)) - 2 (\mathcal{L}(Q_0) + \mathcal{H}(Q_0, Q_0)) \\
& + t \mathcal{H}(Q - Q_0, Q - Q_0),
\end{align*}
where $\mathcal{H}(Q - Q_0, Q - Q_0) \in \mathbb R$ is well-defined by Remark~\ref{Rem: extend H on span}. Hence, letting $t \to 0^+$, we obtain from Remark~\ref{Rem: link-cal L,H,J and F} that
\begin{equation*}
\begin{aligned}
\lim_{t\to 0} \frac{\mathcal{J}(Q_0+ t(Q-Q_0))-\mathcal{J}(Q_0)}{t} &= 2 (\mathcal{L}(Q) + \mathcal{H}(Q_0, Q)) - 2 (\mathcal{L}(Q_0) + \mathcal{H}(Q_0, Q_0))\\
&= \int_{X} 2F(x, Q_0) \diff Q(x) - \int_{X} 2F(x, Q_0) \diff Q_0(x)\\
&=\int_{X} 2F(x, Q_0) \diff(Q - Q_0)(x),
\end{aligned}
\end{equation*}
yielding the conclusion.
\end{proof}

\begin{remark}
The equality \(\frac{\delta \mathcal{J}}{\delta Q}(Q_0)(x) = 2 F(x, Q_0)\) in the statement of Theorem~\ref{thm:nabla J is F} is an abuse of notation, since, as discussed above, \(\frac{\delta \mathcal{J}}{\delta Q}(Q_0)\) is well-defined only up to an additive constant. Such an equality should thus be understood as stating that the set of functions \(G\) satisfying \eqref{J is differentiable} is \(\{2 F(\cdot, Q_0) + \lambda \suchthat \lambda \in \mathbb R\}\). As it will be clear in the sequel, an additive constant plays no role in the characterization of equilibria of \(\NAGall\), and we will thus keep using such an abuse of notation for sake of simplicity.
\end{remark}

Now that we have established a link between variations of $\mathcal J$ and the individual cost $F(x, Q)$ in Theorem~\ref{thm:nabla J is F}, our next step to study the potential structure of $\NAGall$ is to relate equilibria of this game with minimizers of $\mathcal J$. For that purpose, we start with the following definition.

\begin{definition}
Assume that \ref{Hypo-LH} is satisfied and let $\mathcal J$ be defined by \eqref{eq:def-mathcal-J}. We say that $Q_0 \in \dom\mathcal{J}\cap \mathcal{P}_{m_0}(X)$ is a \emph{critical point of $\mathcal{J}$ in $\mathcal{P}_{m_0}(X)$} if $\mathcal J$ is differentiable at $Q_0$ and
\begin{equation}
\label{eq:Euler}
\scalprod*{\frac{\delta \mathcal{J}}{\delta Q}(Q_0), Q - Q_0} \geq 0 \qquad \text{for every }Q \in \dom\mathcal J \cap \mathcal P_{m_0}(X).
\end{equation}
\end{definition}

Inequalities of the form \eqref{eq:Euler} are sometimes known as \emph{Euler inequalities}, and they are usually necessary conditions for minimization of differentiable functions (see, e.g., \cite[Theorem~10.2.1 and Remark~10.2.2]{Allaire2007Numerical} for the case of functions defined on Hilbert spaces). Notice that, by Lemma~\ref{lem: uniquness up to constant}, the notion of critical point introduced in above is independent of the choice of representative of $\frac{\delta \mathcal J}{\delta Q}(Q_0)$, since adding a constant to $\frac{\delta \mathcal J}{\delta Q}(Q_0)$ does not affect the Euler inequality \eqref{eq:Euler}. Our next result states that \eqref{eq:Euler} is indeed a necessary condition for the minimization of $\mathcal J$ on $\mathcal P_{m_0}(X)$.

\begin{proposition}\label{prop:J InEqua opti}
Assume that \ref{Hypo-LH} satisfied and let $Q_0 \in \dom\mathcal J$ be a local minimizer of $\mathcal J$ in $\mathcal P_{m_0}(X)$, i.e., there exists a neighborhood $V$ of $Q_0$ in $\mathcal P_{m_0}(X)$, with the topology of weak convergence of probability measures, such that $\mathcal J(Q) \geq \mathcal J(Q_0)$ for every $Q \in V$. If $\mathcal J$ is differentiable at $Q_0$, then $Q_0$ is a critical point of $\mathcal{J}$ in $\mathcal{P}_{m_0}(X)$.
\end{proposition}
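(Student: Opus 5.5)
The plan is the standard first-variation argument along segments, so the main work is checking that the segment stays in the neighborhood $V$ and in $\dom\mathcal J$. Fix $Q \in \dom\mathcal J \cap \mathcal P_{m_0}(X)$ and set $Q_t = Q_0 + t(Q - Q_0) = (1-t)Q_0 + tQ$ for $t \in [0,1]$. Since $\pi_{\#}$ is linear on measures, $\pi_{\#}Q_t = (1-t)m_0 + t m_0 = m_0$, so $Q_t \in \mathcal P_{m_0}(X)$. I also want $Q_t \in \dom\mathcal J$. Since $J \geq 0$ and $Q_t\otimes Q_t \le (1-t)^2 Q_0\otimes Q_0 + t(1-t)(Q_0\otimes Q + Q\otimes Q_0) + t^2 Q\otimes Q$, finiteness of the cross terms is the only question. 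Under \ref{Hypo-H-leq-L} this is immediate from Corollary~\ref{coro:dom-J-convex}. Without it, I would note that in the intended setting the derivative in \eqref{J is differentiable} is assumed to exist, so the difference quotients are eventually finite, which forces $\mathcal J(Q_t)<+\infty$ for small $t>0$. In any case only small $t$ matter.

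Next, $Q_t \to Q_0$ weakly as $t \to 0^+$: for every bounded continuous $\varphi$, $\int\varphi\diff Q_t = \int \varphi \diff Q_0 + t\int\varphi\diff(Q-Q_0) \to \int\varphi\diff Q_0$. Since $V$ is a neighborhood of $Q_0$ in $\mathcal P_{m_0}(X)$ for the weak topology, this gives $t_0>0$ with $Q_t \in V$ for all $t\in(0,t_0)$. Hence $\mathcal J(Q_t) \ge \mathcal J(Q_0)$ for these $t$, and the difference quotient $\frac{\mathcal J(Q_0+t(Q-Q_0))-\mathcal J(Q_0)}{t}$ is nonnegative. Note that $\mathcal J(Q_0)<+\infty$, so this quotient is well defined in $[0,+\infty]$.

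Letting $t\to0^+$ and using differentiability at $Q_0$ (Definition~\ref{def:deriv-J}), the limit equals $\scalprod{\frac{\delta\mathcal J}{\delta Q}(Q_0), Q-Q_0}$ and is therefore $\ge 0$. Since $Q$ was arbitrary, \eqref{eq:Euler} holds, and $Q_0\in\dom\mathcal J\cap\mathcal P_{m_0}(X)$ is a critical point.

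The only delicate step is the weak-topology neighborhood argument. A weak neighborhood need not be convex or a metric ball, but the explicit convergence of $Q_t$ shown above handles this directly, because every weak neighborhood contains the tail of any weakly convergent net. The domain issue is harmless, since the differentiability hypothesis already includes well-definedness of the quotient for $Q\in\dom\mathcal J$.
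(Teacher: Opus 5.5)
Your proof is correct and follows essentially the same route as the paper: $Q_0+t(Q-Q_0)$ stays in $\mathcal P_{m_0}(X)$ and converges weakly to $Q_0$, so it lies in $V$ for small $t>0$; the difference quotient is then nonnegative, and differentiability at $Q_0$ passes this to the limit to give the Euler inequality. Your digression on whether $Q_t\in\dom\mathcal J$ is not needed, since $\mathcal J(Q_t)\geq\mathcal J(Q_0)$ with $\mathcal J(Q_0)<+\infty$ already makes the quotient a well-defined element of $[0,+\infty]$.
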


\begin{proof}
Let $Q_0 \in \dom\mathcal J$ be a local minimizer of $\mathcal J$ in $\mathcal P_{m_0}(X)$ and assume that $\mathcal J$ is differentiable at $Q_0$. Then, for every $Q \in \dom \mathcal J \cap \mathcal P_{m_0}(X)$, we deduce from the facts that $Q_0$ is a local minimizer of $\mathcal J$ in $\mathcal P_{m_0}(X)$ and $Q_0 + t(Q - Q_0) \to Q_0$ as $t \to 0^+$ in the topology of weak convergence in $\mathcal P_{m_0}(X)$ that
\[
\frac{\mathcal{J}(Q_0+ t(Q-Q_0))-\mathcal{J}(Q_0)}{t} \ge 0
\]
for small enough $t > 0$. Taking the limit as $t \to 0^+$ immediately yields \eqref{eq:Euler}.
\end{proof}

We next establish one of our main results on the potential structure of $\NAGall$, stating that equilibria of this game coincide with critical points of $\mathcal J$ in $\mathcal P_{m_0}(X)$. The proof we present here is closely based on that of \cite[Lemma~3.3]{Santambrogio2021Cucker}.

\begin{theorem}
\label{thm:potential}
Assume that \ref{Hypo-XY-Polish}--\ref{Hypo-H-leq-L} are satisfied and let $\mathcal J$ be given by \eqref{eq:def-mathcal-J}. A measure $Q_0 \in \mathcal P_{m_0}(X)$ is an equilibrium for $\NAGall$ if and only if it is a critical point of $\mathcal J$ in $\mathcal P_{m_0}(X)$.

As a consequence, any local minimizer of $\mathcal J$ in $\mathcal P_{m_0}(X)$ is an equilibrium for $\NAGall$.
\end{theorem}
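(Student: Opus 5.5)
Both directions rest on Theorem~\ref{thm:nabla J is F}. By that theorem, $\mathcal J$ is differentiable at every $Q_0\in\dom\mathcal J$, with $\frac{\delta\mathcal J}{\delta Q}(Q_0)=2F(\cdot,Q_0)$. So, by Lemma~\ref{lem: uniquness up to constant}, the Euler inequality \eqref{eq:Euler} is equivalent to
\[
\int_X F(x,Q_0)\diff Q(x)\ \ge\ \int_X F(x,Q_0)\diff Q_0(x)\qquad\text{for every } Q\in\dom\mathcal J\cap\mathcal P_{m_0}(X).
\]
By Remark~\ref{Rem: link-cal L,H,J and F} and Corollary~\ref{coro:dom-L-J}, the integrability requirement in the definition of equilibrium is exactly $Q_0\in\dom\mathcal J$. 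Hence both notions concern $Q_0\in\dom\mathcal J\cap\mathcal P_{m_0}(X)$, and it remains to show that the displayed inequality holds if and only if $Q_0$-a.e.\ $x$ minimizes $F(\cdot,Q_0)$ on $\pi^{-1}(\pi(x))$.

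\emph{Equilibrium $\Rightarrow$ critical point.} Let $Q\in\dom\mathcal J\cap\mathcal P_{m_0}(X)$ and set $V(y)=\inf_{\pi(z)=y}F(z,Q_0)$. For $Q$-a.e.\ $x$ we have $F(x,Q_0)\ge V(\pi(x))$, and for $Q_0$-a.e.\ $x$ we have equality. Since $\pi_\#Q=\pi_\#Q_0=m_0$, both integrals reduce to $\int_Y V\diff m_0$, which gives the inequality. The main obstacle is measurability of $V$. I would get it from lower semicontinuity of $F(\cdot,Q_0)$ (Lemma~\ref{lemm:F-lsc}) together with compactness of the sublevel sets of $L$: on $\{L\le k\}$, the function $\inf\{F(z,Q_0):\pi(z)=y,\ L(z)\le k\}$ is lower semicontinuous in $y$ by a compactness argument, and $V$ is the decreasing limit of these functions as $k\to\infty$, hence Borel. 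It is also enough to use the pointwise bound together with the $Q_0$-a.e.\ equality, working with a Borel version of $V$. Finiteness of $V$ follows from the estimate in the proof of Corollary~\ref{coro:exist-optimal-FxQ}, $V\le (C+1)\kappa+C\mathcal L(Q_0)+C$.

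\emph{Critical point $\Rightarrow$ equilibrium.} I would argue by contradiction. Let $\Phi_*\colon Y\to X$ be a Borel selection of minimizers, $\pi(\Phi_*(y))=y$ and $F(\Phi_*(y),Q_0)=V(y)$. It exists by Corollary~\ref{coro:exist-optimal-FxQ} combined with a measurable selection argument as in Lemma~\ref{PhiBorel}, applied to the set-valued map $y\mapsto\{x:\pi(x)=y,\ F(x,Q_0)\le V(y)\}$, which has closed values and compact values inside a sublevel set of $L$ after intersecting with $\{L\le k\}$. Put $Q=\Phi_{*\#}m_0\in\mathcal P_{m_0}(X)$. To check $Q\in\dom\mathcal L$, note that $L(\Phi_*(y))\le F(\Phi_*(y),Q_0)=V(y)\le$ const. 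Then $\int F(\cdot,Q_0)\diff Q=\int_Y V\diff m_0$, while $\int F(\cdot,Q_0)\diff Q_0=\int_X F(x,Q_0)\diff Q_0(x)$, and $F(x,Q_0)\ge V(\pi(x))$ everywhere. If the set of non-optimal $x$ had positive $Q_0$-measure, the strict inequality $\int F(\cdot,Q_0)\diff Q_0>\int V\circ\pi\diff Q_0=\int V\diff m_0$ would contradict the Euler inequality. Hence $Q_0$-a.e.\ $x$ is optimal.

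Finally, the ``as a consequence'' part follows from Proposition~\ref{prop:J InEqua opti}, since $\mathcal J$ is differentiable at every point of its domain by Theorem~\ref{thm:nabla J is F}; a local minimizer lies in $\dom\mathcal J$ because that set is nonempty in $\mathcal P_{m_0}(X)$ (Corollary~\ref{coro:dom-J-convex}).
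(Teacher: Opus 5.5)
Your direction ``equilibrium $\Rightarrow$ critical point'' is essentially the paper's argument. The converse, however, has a gap at the one step that carries all the difficulty: the Borel selection $\Phi_*$ of minimizers.

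The argument of Lemma~\ref{PhiBorel} does not work from closed values contained in a compact set alone. It needs the \emph{graph} of the set-valued map to be closed, which gives upper semicontinuity and then measurability. For $\Opt(y)=\{x : \pi(x)=y,\ F(x,Q_0)\le V(y)\}$ this fails in general. From $(y_n,x_n)\to(y,x)$ with $x_n\in\Opt(y_n)$ you only get $F(x,Q_0)\le\liminf_n V(y_n)$. Since $V$ is merely lower semicontinuous, you cannot conclude $F(x,Q_0)\le V(y)$. Closedness of this graph essentially amounts to closedness of $\OOpt(Q_0)$, i.e.\ assumption~\ref{Hypo-technical}, which is not assumed here. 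A concrete instance: take $Y=[0,1]$, $X=[0,1]\times\{0,1\}$, $\pi(y,i)=y$, $H=0$, $L(\cdot,0)\equiv 1$, $L(y,1)=2$ for $y>0$, and $L(0,1)=0$. This satisfies \ref{Hypo-XY-Polish}--\ref{Hypo-H-leq-L}, yet $\OOpt(Q)=\{(y,0) : y>0\}\cup\{(0,1)\}$ is not closed. What you wrote is exactly the ``simpler argument'' of Remark~\ref{remk:new-proof}, which the paper gives only under \ref{Hypo-technical}.

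The paper avoids a global selection. It picks rationals $0<q<r$ such that $A=\{x : F(x,Q_0)>r \text{ and } F(z,Q_0)\le q \text{ for some } z \text{ with } \pi(z)=\pi(x)\}$ has positive $Q_0$-measure. It then selects from $x\mapsto\{z\in A^q : \pi(z)=\pi(x)\}$, where $A^q=\{F(\cdot,Q_0)\le q\}$. Because the level $q$ is fixed, this map has closed graph and values in the compact set $A^q$, so the argument of Lemma~\ref{PhiBorel} applies, and only the mass of $A$ is moved.

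Alternatively, you can keep your global construction and prove measurability directly. For closed $K\subset X$, we have $\{y : \Opt(y)\cap K\neq\emptyset\}=\{y : V_K(y)\le V(y)\}$, where $V_K(y)=\inf\{F(z,Q_0) : z\in K,\ \pi(z)=y\}$. This $V_K$ is lower semicontinuous, and attained when finite, by the same compactness argument you used for $V$. Hence $\Opt$ is weakly measurable with nonempty closed values, and the Kuratowski--Ryll-Nardzewski theorem gives a Borel selection. Either way, you need an argument beyond the one you cite.

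A smaller point concerns the last assertion. Nonemptiness of $\dom\mathcal J\cap\mathcal P_{m_0}(X)$ does not place a \emph{local} minimizer in $\dom\mathcal J$. A point around which $\mathcal J\equiv+\infty$ is a trivial local minimizer: take $X=\{a,b\}$, $Y$ a singleton, $L(a)=0$, $L(b)=+\infty$, $H=0$, and $Q_0=\delta_b$. The assertion must be read, as in Proposition~\ref{prop:J InEqua opti}, for local minimizers lying in $\dom\mathcal J$.
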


\begin{proof}
Recall that, according to Remark~\ref{Rem: link-cal L,H,J and F}, $Q_0\in \dom \mathcal J$ is equivalent to having $\int_{X} F(x, \allowbreak Q_0)\allowbreak \diff Q_0(x) < +\infty$.

Assume first that $Q_0\in \mathcal P_{m_0}(X)$ is a critical point of $\mathcal{J}$ in $\mathcal P_{m_0}(X)$ and note that, by definition, we have $Q_0 \in \dom\mathcal J$, and hence $\int_{X} F(x, \allowbreak Q_0)\allowbreak \diff Q_0(x) < +\infty$. To obtain a contradiction, suppose that the set 
\[
\left\{x\in X \suchthat \exists z \in X \text{ such that } \pi(z) = \pi(x) \text{ and } F(z, Q_0) < F(x, Q_0)\right\} 
\]
is not $Q_0$-negligible. Note that the above set is equal to
\[
\bigcup_{\substack{(q, r) \in \mathbb Q_+^2 \\ 0 < q < r}} \left\{x\in X \suchthat F(x,Q_0) > r \text{ and } \{z \in X \suchthat \pi(z) = \pi(x) \text{ and } F(z, Q_0) \leq q\}\neq \emptyset\right\},
\]
and thus there exists a pair of positive rational numbers $(q,r)$ with $q<r$ such that the set $A$ defined by
\begin{equation}
\label{eq:defi-A}
A = \left\{x\in X \suchthat F(x,Q_0) > r \text{ and } \{z \in X \suchthat \pi(z) = \pi(x) \text{ and } F(z, Q_0) \leq q\}\neq \emptyset\right\}
\end{equation}
is not $Q_0$-negligible.

For any $p \in \mathbb R$, define $A^p$ as
\[
A^p=\left\{x \in X \suchthat F(x,Q_0) \leq p\right\}.
\]
Note that $A^p$ is closed since, by Lemma~\ref{lemm:F-lsc}, $x \mapsto F(x, Q_0)$ is lower semicontinuous. In addition, $A^p \subset \{x \in X \suchthat L(x) \leq p\}$ and the latter set is compact thanks to \ref{Hypo-compact}. Hence, $A^p$ is compact.

Let us now define the multifunction $\mathcal{S}$ as 
\begin{equation*}
\mathcal{S}\colon
\left\{
\begin{aligned}
 X & \rightrightarrows A^q, \\
 x & \mapsto \mathcal{S}(x) = \left\{z \in A^q \suchthat \pi(z)=\pi(x)\right\}.
\end{aligned}
\right.
\end{equation*}
Using \ref{Hypo-pi}, it is immediate to verify that $\mathcal S$ has closed graph and, using the fact that $A^q$ is compact, one can also easily verify that the domain $\dom \mathcal S$, defined as $\dom \mathcal{S} = \{x\in X\suchthat \mathcal{S}(x) \neq \emptyset\}$, is closed. It is also clearly nonempty, since $A^q \subset \dom \mathcal S$. By \cite[Proposition~1.4.8]{Aubin2009Set}, we deduce that $\mathcal{S}$ is upper semicontinuous. Hence, by \cite[Proposition~1.4.4]{Aubin2009Set}, $\mathcal{S}^{-1}(A) =\{x \in X \suchthat \mathcal{S}(x) \cap A \neq \emptyset\}$ is closed for every closed set $A$. Now by definition of measurability and \cite[Proposition~III.11]{Castaing1977Convex}, we obtain that $\mathcal{S}$ is Borel-measurable, and, by \cite[Theorem~8.1.3]{Aubin2009Set}, there exists a Borel-measurable function $s \colon \dom \mathcal{S} \to A^q$ satisfying
\[
s(x)\in \mathcal{S}(x), \qquad \text{ for all } x \in \dom \mathcal{S}. 
\]

Note that the set $A$ defined in \eqref{eq:defi-A} is Borel measurable, since it can be written as
\[
A = \left\{x\in X \suchthat F(x,Q_0)>r\right\} \cap \dom \mathcal S,
\]
$\dom \mathcal S$ is closed, and $\left\{x\in X \suchthat F(x,Q_0)>r\right\}$ is the complement of $A^r$, and hence it is open. We can thus define a measure $\widetilde{Q} \in \mathcal P(X)$ by $\widetilde{Q}=s_{\#}(Q_0|_A)+Q_0|_{(X \setminus A)}$, which is well-defined since $\widetilde Q(X) = Q_0(A \cap s^{-1}(X)) + Q_0(X \setminus A) = Q_0(A) + Q_0(X \setminus A) = 1$, and in addition $\widetilde Q \in \mathcal P_{m_0}(X)$, since
\[
\begin{aligned}
\pi_{\#} \widetilde Q &= \pi_{\#} \left(s_{\#}(Q_0|_A)+Q_0|_{(X \setminus A)}\right)
= \pi_{\#} \left(s_{\#}(Q_0|_A)\right) + \pi_{\#} (Q_0|_{(X \setminus A)})\\
&= (\pi \circ s)_{\#} \left(Q_0|_A\right) + \pi_{\#}(Q_0|_{(X \setminus A)}) 
= \pi_{\#}(Q_0|_{A}) + \pi_{\#}(Q_0|_{(X \setminus A)}) \\
&= \pi_{\#} (Q_0|_{A} + Q_0|_{(X \setminus A)}) = \pi_{\#} Q_0 = m_0,
\end{aligned}
\]
where we use that $\pi \circ s (x) = \pi(x)$ for every $x \in \dom\mathcal S$. Using that $Q_0(A) > 0$, we also compute
\[
\begin{aligned}
\int_{X} F(x,Q_0) \diff \widetilde Q(x) 
&= \int_{X \setminus A} F(x, Q_0)\diff Q_0(x) + \int_{A} F(s(x), Q_0) \diff Q_0(x)\\
&\leq \int_{X \setminus A} F(x, Q_0)\diff Q_0(x) + q Q_0(A)\\
&< \int_{X \setminus A} F(x, Q_0)\diff Q_0(x) + r Q_0(A)\\
&< \int_{X \setminus A} F(x, Q_0)\diff Q_0(x) + \int_{A} F(x, Q_0) \diff Q_0(x)\\
&= \int_{X} F(x,Q_0) \diff  Q_0(x),
\end{aligned}
\]
which, using Theorem~\ref{thm:nabla J is F}, contradicts the fact that $Q_0$ is a critical point of $\mathcal J$ in $\mathcal P_{m_0}(X)$.

For the converse implication, assume that $Q_0\in \mathcal P_{m_0}(X)$ is an equilibrium of $\NAGall$. In particular, $\int_X F(x, Q_0) \diff Q_0(x) < +\infty$, and thus $Q_0 \in \dom \mathcal J$. Define $\nu\colon Y \to \overline{\mathbb R}_+$ by
\begin{equation}
\label{eq:defi-nu}
\nu(y) = \inf_{\substack{z\in X\\ \pi(z) = y}} F(z, Q_0)
\end{equation}
and note that $\nu$ is Borel-measurable. Indeed, for every $\rho \geq 0$, we have
\begin{align*}
\{y \in Y \suchthat \nu(y) \leq \rho\} & = \bigcap_{n \in \mathbb N^\ast} \left\{y \in Y \suchthat \exists z \in X \text{ such that } \pi(z) = y \text{ and } F(z, Q_0) \leq \rho + \frac{1}{n}\right\} \\
& = \bigcap_{n \in \mathbb N^\ast} \pi\left(\left\{z \in X \suchthat F(z, Q_0) \leq \rho + \frac{1}{n}\right\}\right).
\end{align*}
For every $n \in \mathbb N^\ast$, by Lemma~\ref{lemm:F-lsc} and \ref{Hypo-compact}, we have that $\left\{z \in X \suchthat F(z, Q_0) \leq \rho + \frac{1}{n}\right\}$ is a closed subset of the compact set $\left\{z \in X \suchthat L(z) \leq \rho + \frac{1}{n}\right\}$, and hence it is compact. Thus, by \ref{Hypo-pi}, the set $\pi\left(\left\{z \in X \suchthat F(z, Q_0) \leq \rho + \frac{1}{n}\right\}\right)$ is compact, and thus it is closed, allowing one to conclude that $\{y \in Y \suchthat \nu(y) \leq \rho\}$ is a Borel set. This yields the Borel-measurability of $\nu$, as required.

By definition of equilibrium, we have $F(x, Q_0) = \nu(\pi(x))$ for $Q_0$-almost every $x \in X$. Hence
\[
\int_{X} F(x,Q_0) \diff Q_0(x) = \int_{X} \nu(\pi(x)) \diff Q_0(x).
\]
On the other hand, if one takes $Q \in \dom \mathcal{J} \cap \mathcal P_{m_0}(X)$, then
\[
\begin{aligned}
\int_{X} F(z,Q_0) \diff Q(z) & \geq \int_{X} \nu(\pi(z)) \diff Q(z) = \int_{Y} \nu(y) \diff (\pi_{\#}Q)(y) = \int_Y \nu(y) \diff m_0(y) \\
& = \int_Y \nu(y) \diff (\pi_{\#} Q_0)(y) = \int_{X} \nu(\pi(x)) \diff Q_0(x) = \int_{X} F(x,Q_0) \diff Q_0(x),
\end{aligned}
\]
and thus, using Theorem~\ref{thm:nabla J is F}, we deduce that $Q_0$ is a critical point of $\mathcal J$ in $\mathcal P_{m_0}(X)$.

Finally, the last assertion of the statement follows as an immediate consequence of Proposition~\ref{prop:J InEqua opti}.
\end{proof}

\subsection{Existence of equilibria}
\label{sec:existence-of-equilibria}

We now turn to the question of existence of an equilibrium for $\NAGall$ which, thanks to the potential structure of this game highlighted in Section~\ref{sec:potential}, can be addressed by studying the existence of minimizers for the function $\mathcal J$ from \eqref{eq:def-mathcal-J}. We start with the following preliminary result on $\mathcal J$.

\begin{lemma}
\label{lemm:mathcal-J-lsc}
Assume that \ref{Hypo-XY-Polish} and \ref{Hypo-J} are satisfied and let $\mathcal J$ be the function defined in \eqref{eq:def-mathcal-J}. Then $\mathcal J$ is lower semicontinuous on $\mathcal P(X)$.
\end{lemma}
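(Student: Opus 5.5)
To prove that $\mathcal J$ is lower semicontinuous on $\mathcal P(X)$, I would combine two standard facts. First, weak convergence of probability measures is stable under products. Second, integrals of nonnegative lower semicontinuous functions are lower semicontinuous with respect to weak convergence.

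Let $(Q_n)_{n \in \mathbb N}$ be a sequence in $\mathcal P(X)$ converging weakly to some $Q \in \mathcal P(X)$. Since $X$ is Polish by \ref{Hypo-XY-Polish}, so is $X \times X$, and $\mathcal P(X)$ is metrizable, so it suffices to work with sequences.

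\textbf{Step 1: the product measures converge.} We have $Q_n \otimes Q_n \to Q \otimes Q$ weakly in $\mathcal P(X \times X)$. One way to see this is through tightness. By Prokhorov's theorem, $(Q_n)$ is tight; hence $(Q_n \otimes Q_n)$ is tight, using the compact rectangles $K \times K$. Any limit point of $(Q_n \otimes Q_n)$ agrees with $Q \otimes Q$ on functions of the form $f(x) g(\widetilde x)$ with $f, g$ bounded and continuous. Such functions determine a probability measure on the product (for instance by a Stone--Weierstrass argument on compacts, or via bounded Lipschitz functions). Alternatively, one may simply cite the classical fact from Billingsley that weak convergence is preserved under products of marginals.

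\textbf{Step 2: lower semicontinuity of the integral.} By \ref{Hypo-J}, the function $J$ is lower semicontinuous and nonnegative on $X \times X$ with values in $\overline{\mathbb R}_+$. On a metric space, such a function is the pointwise nondecreasing limit of a sequence $(J_k)_{k \in \mathbb N}$ of bounded Lipschitz functions. For instance, one can take
\[
J_k(z) = \inf_{w} \bigl(\min(J(w), k) + k\, \dist(z, w)\bigr).
\]
For each fixed $k$, weak convergence gives
\[
\int J_k \diff (Q \otimes Q) = \lim_{n \to +\infty} \int J_k \diff (Q_n \otimes Q_n) \leq \liminf_{n \to +\infty} \mathcal J(Q_n).
\]
Letting $k \to +\infty$ and applying the monotone convergence theorem then yields $\mathcal J(Q) \leq \liminf_{n \to +\infty} \mathcal J(Q_n)$.

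Step 2 is the routine portmanteau-type argument. The only point needing any care is Step 1, the convergence of the product measures. I expect this to be the main obstacle, and I would handle it by citing the standard result.
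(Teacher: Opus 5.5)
Your proposal is correct and follows essentially the paper's argument: first the convergence $Q_n \otimes Q_n \to Q \otimes Q$, which the paper simply cites from \cite[Lemma~A.1]{Santambrogio2021Cucker}, then approximating $J$ from below by bounded functions and concluding with monotone convergence. The only cosmetic difference is that the paper uses the truncations $J_C = \min\{J, C\}$ together with the weak-convergence inequality for bounded lower semicontinuous functions \cite[Corollary~8.2.5]{Bogachev2007Measure}, whereas your bounded Lipschitz regularizations $J_k$ amount to proving that inequality inline.
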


\begin{proof}
Let $(Q_n)_{n \in \mathbb N}$ be a sequence in $\mathcal P(X)$ converging to some $Q \in \mathcal P(X)$.
Since $X$ is a Polish space, it follows from \cite[Lemma~A.1]{Santambrogio2021Cucker} that the sequence of the product measures $(Q_{n}\otimes Q_{n})_{n\in \mathbb N}$ converges to $Q \otimes Q$. For $C > 0$, let $J_C \colon X \times X \to \mathbb R_+$ be defined by $J_C(x, \widetilde x) =\min\{J(x, \widetilde x), C\}$ and notice that, by \ref{Hypo-J}, the function $J_C$ is lower semicontinuous and boun\-ded. Then, by \cite[Corollary~8.2.5]{Bogachev2007Measure}, we have
\[
\begin{aligned}
\int_{X \times X} J_C(x, \widetilde x) \diff(Q\otimes Q)(x, \widetilde x) & \leq \liminf_{n\to +\infty} \int_{X \times X} J_C(x, \widetilde x)\diff(Q_{n} \otimes Q_{n})(x, \widetilde x) \\
&\leq \liminf_{n\to +\infty} \int_{X \times X} J(x, \widetilde x)\diff(Q_{n} \otimes Q_{n})(x, \widetilde x) = \liminf_{n \to +\infty} \mathcal J(Q_n).
\end{aligned}
\]
Since the above holds for every $C>0$, by Lebesgue's monotone convergence theorem, we deduce that
\[
\mathcal J(Q) = \int_{X \times X} J(x, \widetilde x) \diff(Q\otimes Q)(x, \widetilde x) = \lim_{C \to +\infty} \int_{X \times X} J_C(x, \widetilde x) \diff (Q\otimes Q)(x, \widetilde x) \leq \liminf_{n \to +\infty} \mathcal J(Q_n),
\]
yielding the conclusion.
\end{proof}

Our main result on the existence of minimizers for $\mathcal J$ and of equilibria for $\NAGall$ is the following

\begin{theorem}
\label{thm:exist-abstract}
Assume that \ref{Hypo-XY-Polish}--\ref{Hypo-LH} and \ref{Hypo-J}--\ref{Hypo-H-leq-L} are satisfied and let $\mathcal J$ be the function from \eqref{eq:def-mathcal-J}. Then, for every $m_0 \in \mathcal P(Y)$, $\mathcal J$ admits a minimizer in $\mathcal P_{m_0}(X)$.

As a consequence, if in addition \ref{Hypo-H-symmetric} is also satisfied, then there exists an equilibrium $Q$ for $\NAGall$.
\end{theorem}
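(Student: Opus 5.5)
We use the direct method of the calculus of variations on $\mathcal P_{m_0}(X)$.

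\emph{Step 1: finite infimum.} By Corollary~\ref{coro:dom-J-convex}, $\dom \mathcal J \cap \mathcal P_{m_0}(X)$ is nonempty. Hence $M := \inf_{\mathcal P_{m_0}(X)} \mathcal J < +\infty$, and clearly $M \geq 0$. We take a minimizing sequence $(Q_n)_{n\in\mathbb N}$ in $\mathcal P_{m_0}(X)$ with $\mathcal J(Q_n) \leq M+1$ for all $n$.

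\emph{Step 2: tightness.} This is the step that needs an actual estimate. Since $H \geq 0$, Remark~\ref{Rem: link-cal L,H,J and F} gives $2\mathcal L(Q_n) \leq \mathcal J(Q_n) \leq M+1$. Markov's inequality then yields, for every $\kappa>0$,
\[
Q_n(\{x \in X \suchthat L(x) > \kappa\}) \leq \frac{\mathcal L(Q_n)}{\kappa} \leq \frac{M+1}{2\kappa}.
\]
The sublevel sets $\{L\leq\kappa\}$ are compact by \ref{Hypo-compact}, so the family $(Q_n)$ is uniformly tight. Since $X$ is Polish, Prokhorov's theorem gives a subsequence converging weakly to some $Q\in\mathcal P(X)$. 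By Remark~\ref{remk:Pm0X-closed}, which uses \ref{Hypo-pi}, the limit satisfies $Q \in \mathcal P_{m_0}(X)$.

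\emph{Step 3: lower semicontinuity.} By Lemma~\ref{lemm:mathcal-J-lsc}, which relies on \ref{Hypo-J},
\[
\mathcal J(Q) \leq \liminf_{n\to+\infty} \mathcal J(Q_n) = M.
\]
Hence $Q$ is a minimizer, with $\mathcal J(Q) = M<+\infty$, so $Q \in \dom\mathcal J$.

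\emph{Step 4: equilibrium.} If \ref{Hypo-H-symmetric} also holds, then all of \ref{Hypo-XY-Polish}--\ref{Hypo-H-leq-L} are satisfied. A global minimizer on $\mathcal P_{m_0}(X)$ is in particular a local minimizer, so the last assertion of Theorem~\ref{thm:potential} shows that $Q$ is an equilibrium of $\NAGall$. Differentiability at $Q$ needed along the way is supplied by Theorem~\ref{thm:nabla J is F}, since $Q\in\dom \mathcal J$.

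The only genuinely new ingredient is the tightness estimate in Step 2. It rests entirely on the nonnegativity of $H$, which lets $\mathcal J$ control $\mathcal L$, combined with the compactness of the sublevel sets of $L$. Everything else is assembled from results already established above.
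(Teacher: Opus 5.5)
Your proposal is correct and follows the paper's proof essentially step by step. The steps are: nonemptiness of $\dom\mathcal J \cap \mathcal P_{m_0}(X)$ via Corollary~\ref{coro:dom-J-convex}; tightness of a minimizing sequence via Markov's inequality and \ref{Hypo-compact}; Prokhorov's theorem; closedness of $\mathcal P_{m_0}(X)$; lower semicontinuity from Lemma~\ref{lemm:mathcal-J-lsc}; and the last assertion of Theorem~\ref{thm:potential} for the equilibrium.
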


\begin{proof}
Let $m_0 \in \mathcal P(Y)$ and note that, by Corollary~\ref{coro:dom-J-convex}, we have $\dom \mathcal J \cap \mathcal P_{m_0}(X) \neq \emptyset$. Let $(Q_n)_{n \in \mathcal N}$ be a minimizing sequence for $\mathcal J$ in $\mathcal P_{m_0}(X)$, i.e., $(Q_n)_{n \in \mathbb N}$ is a sequence in $\dom\mathcal J \cap \mathcal P_{m_0}(X)$ and
\[\lim_{n \to +\infty} \mathcal J(Q_n) = \inf_{Q \in \mathcal P_{m_0}(X)} \mathcal J(Q).\]

We claim that the sequence $(Q_n)_{n \in \mathbb N}$ is tight. Indeed, by Markov's inequality, for every $M > 0$, we have
\[
Q_n(\{x \in X \suchthat L(x) > M\}) \leq \frac{1}{M} \mathcal{L}(Q_n) \leq \frac{1}{M} \mathcal{J}(Q_n),
\]
and the conclusion follows since $(\mathcal{J}(Q_n))_{n \in \mathbb N}$ is a bounded sequence and, by \ref{Hypo-compact}, $\{x \in X \suchthat L(x) \leq M\}$ is compact for every $M > 0$. Therefore, by Prokhorov's theorem (see, e.g., \cite[Theorem~5.1.3]{Ambrosio2005Gradient}), up to extracting a subsequence (which we still denote by $(Q_n)_{n \in \mathbb N}$ for simplicity), there exists $Q \in \mathcal P(X)$ such that $Q_n \to Q$ as $n \to +\infty$. In addition, by Remark~\ref{remk:Pm0X-closed}, we have $Q \in \mathcal P_{m_0}(X)$. By Lemma~\ref{lemm:mathcal-J-lsc}, we deduce that $\mathcal J(Q) \leq \lim_{n \to +\infty} \mathcal J(Q_n) = \inf_{\widetilde Q \in \mathcal P_{m_0}(X)} \mathcal J(\widetilde Q)$, and thus $Q$ is a minimizer of $\mathcal J$ in $\mathcal P_{m_0}(X)$.

The last part of the statement is an immediate consequence of Theorem~\ref{thm:potential}.
\end{proof}

\subsection{Strong equilibria}
\label{sec:strong-equilibria}

Definitions~\ref{def:equilibrium-3} and \ref{def:equilibrium-potential} of equilibria of the mean field game from Section~\ref{Cucker-Smale:The Model} and of $\NAGall$ require an optimization problem to be solved by \emph{$Q$-almost every} element of the space, similarly to other Lagrangian approaches in the literature, such as those from \cite{Mazanti2019Minimal, Sadeghi2022Multi, Sadeghi2022Nonsmooth}. An alternative definition consists in requiring the optimization problem to be solved \emph{for every} element in the support of $Q$. This stronger notion of equilibrium was used, for instance, in \cite{Santambrogio2021Cucker, Cannarsa2018Existence, Cannarsa2019C11, Cannarsa2021Mean, Dweik2020Sharp}, and, in many situations, as shown in \cite[Remark~4.6]{Dweik2020Sharp} and \cite[Proposition~3.7]{Sadeghi2022Nonsmooth}, both notions of equilibria coincide. We aim at proving that this is also the case in our setting, at the cost of an additional assumption on $\NAGall$. To do that, let us start by defining the notion of strong equilibrium.

\begin{definition}
\label{def:strong-equilibrium-potential}
Let $X$ and $Y$ be Polish spaces, $\pi\colon X \to Y$, $L\colon X \to \overline{\mathbb R}_+$, and $H\colon X \times X \to \overline{\mathbb R}_+$ be Borel-measurable functions, and $m_0 \in \mathcal P(Y)$. We say that $Q \in \mathcal P(X)$ is a \emph{strong equilibrium} of $\NAGall$ if $\pi_{\#} Q = m_0$, $\int_X F(x, Q) \diff Q(x) < +\infty$, and every $x \in \supp(Q)$ solves $\Min(\pi(x), F)$ with $F$ given by \eqref{eq:abstract-cost}.
\end{definition}

To prove that the notions of equilibrium and strong equilibrium are equivalent, we will make use of an additional technical assumption, which we now state.

\begin{hypothesisNAG}[resume]
\item\label{Hypo-technical} For every $Q \in \dom \mathcal L$, the set $\OOpt(Q)$ defined by
\begin{equation}
\label{eq:def-OOpt}
\OOpt(Q) = \left\{x \in X \suchthat F(x, Q) = \inf_{\substack{\tilde x \in X \\ \pi(\tilde x) = \pi(x)}} F(\tilde x, Q)\right\}
\end{equation}
is closed.
\end{hypothesisNAG}

As a consequence, we obtain the following result on the equivalence between equilibrium and strong equilibrium for $\NAGall$.

\begin{theorem}
\label{thm:strong-iff-equilibrium}
Assume that \ref{Hypo-XY-Polish}--\ref{Hypo-LH}, \ref{Hypo-J}, and \ref{Hypo-technical} are satisfied. Then $Q \in \mathcal P_{m_0}(X)$ is an equilibrium of the non-atomic game $\NAGall$ if and only if it is a strong equilibrium of this game.
\end{theorem}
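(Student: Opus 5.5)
Both directions reduce to comparing the set $\OOpt(Q)$ from \eqref{eq:def-OOpt} with $\supp(Q)$. By Definitions~\ref{def:equilibrium-potential} and \ref{def:strong-equilibrium-potential}, both notions share the conditions $\pi_{\#}Q = m_0$ and $\int_X F(x,Q)\diff Q(x) < +\infty$. The only difference is whether $Q$-almost every $x$, or every $x \in \supp(Q)$, belongs to $\OOpt(Q)$. So I will show that, for a measure $Q$ satisfying the two common conditions, we have $Q(X \setminus \OOpt(Q)) = 0$ if and only if $\supp(Q) \subset \OOpt(Q)$.

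\emph{Strong $\Rightarrow$ weak.} This direction is elementary. On a Polish space (separable metric), every Borel probability measure satisfies $Q(X\setminus \supp(Q)) = 0$: the complement of the support is a union of open sets of measure zero, and by separability (Lindelöf) it is a countable such union. Hence if $\supp(Q) \subset \OOpt(Q)$, then $Q(X\setminus\OOpt(Q)) \le Q(X\setminus\supp(Q)) = 0$. One subtlety is that $\OOpt(Q)$ should be Borel so that $Q(\OOpt(Q))$ makes sense. Under \ref{Hypo-technical} it is closed, provided $Q \in \dom\mathcal L$. This holds because $\int_X F\diff Q<+\infty$ and Remark~\ref{Rem: link-cal L,H,J and F} give $\mathcal J(Q)<+\infty$, and then Corollary~\ref{coro:dom-L-J} gives $Q\in\dom\mathcal L$. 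Corollary~\ref{coro:dom-L-J} needs \ref{Hypo-H-leq-L}. If that hypothesis is not to be assumed, I would argue directly: $\mathcal L(Q) \le \mathcal L(Q) + \mathcal H(Q,Q) = \int_X F\diff Q < +\infty$ since $H\ge 0$. This is the cleaner route, as it uses only the nonnegativity of $H$.

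\emph{Weak $\Rightarrow$ strong.} Let $Q$ be an equilibrium. As above, $Q\in\dom\mathcal L$, so \ref{Hypo-technical} makes $\OOpt(Q)$ closed. The equilibrium condition says $Q(\OOpt(Q)) = 1$, i.e.\ $X\setminus\OOpt(Q)$ is an open set of $Q$-measure zero. Every point of this open set therefore has a neighborhood, namely the set itself, of measure zero, so no such point lies in $\supp(Q)$. Hence $\supp(Q)\subset \OOpt(Q)$, which means every $x\in\supp(Q)$ solves $\Min(\pi(x),Q)$. So $Q$ is a strong equilibrium.

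The argument is short, and the only delicate point is checking that \ref{Hypo-technical} applies, i.e.\ that $Q\in\dom\mathcal L$. I will handle this through the bound $\mathcal L(Q)\le\int_X F(x,Q)\diff Q(x)$, which follows from $H \geq 0$. Hypotheses \ref{Hypo-pi}, \ref{Hypo-LH} and \ref{Hypo-J} play no further role beyond making $F(\cdot,Q)$ measurable, which is also given by Lemma~\ref{lemm:F-lsc}.
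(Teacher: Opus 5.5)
Your proof is correct and follows the paper's argument: both directions compare $\supp(Q)$ with the closed set $\OOpt(Q)$, and \ref{Hypo-technical} applies because $Q\in\dom\mathcal L$, which follows from $\int_X F(x,Q)\diff Q(x)<+\infty$ and $H\geq 0$. The only differences are presentational. The paper treats the strong-to-weak direction as immediate, whereas you justify $Q(X\setminus\supp(Q))=0$ via separability. For weak-to-strong, the paper picks a sequence in $\OOpt(Q)$ converging to $x\in\supp(Q)$, whereas you use that the open null complement of $\OOpt(Q)$ cannot meet the support.
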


\begin{proof}
It is immediate from Definitions~\ref{def:equilibrium-potential} and \ref{def:strong-equilibrium-potential} that every strong equilibrium is also an equilibrium of $\NAGall$. To prove the converse statement, let $Q \in \mathcal P_{m_0}(X)$ be an equilibrium of $\NAGall$ and note that, by Remark~\ref{Rem: link-cal L,H,J and F}, we have $Q \in \dom\mathcal J$, implying that $Q \in \dom\mathcal L$. Note also that, since $Q$ is an equilibrium of $\NAGall$, we have $Q(\OOpt(Q)) = 1$, where $\OOpt(Q)$ is the set defined in \eqref{eq:def-OOpt}.

Let $x \in \supp(Q)$. Then, by definition of support and using the fact that $Q(\OOpt(Q)) = 1$, there exists a sequence $(x_n)_{n \in \mathbb N}$ in $\OOpt(Q)$ such that $x_n \to x$ as $n \to +\infty$. Since $\OOpt(Q)$ is closed by \ref{Hypo-technical}, we deduce that $x \in \OOpt(Q)$. Thus $\supp(Q) \subset \OOpt(Q)$, concluding the proof.
\end{proof}

\begin{remark}
\label{remk:new-proof}
If one also assumes \ref{Hypo-technical} in Theorem~\ref{thm:potential}, then one can provide a simpler argument for the first implication in its proof, i.e., the fact that critical points of $\mathcal J$ in $\mathcal P_{m_0}(X)$ are equilibria of $\NAG(X, Y, \pi, L, H, m_0)$.

Indeed, notice first that, under the assumptions of Theorem~\ref{thm:potential} and \ref{Hypo-technical}, for every $Q_0 \in \dom \mathcal L$, we have
\begin{equation}
\label{eq:OOpt-compact}
\OOpt(Q_0) \subset \{x \in X \suchthat L(x) \leq (C+1)\kappa + C \mathcal L(Q_0) + C\},
\end{equation}
where $\kappa$ and $C$ are the constants from \ref{Hypo-domain} and \ref{Hypo-H-leq-L} and $\OOpt(Q_0)$ is the set defined in \eqref{eq:def-OOpt}. This is the case since, given $x \in \OOpt(Q_0)$, we have $L(x) \leq F(x, Q_0) \leq F(z, Q_0)$ for every $z \in X$ with $\pi(z) = \pi(x)$. By \ref{Hypo-H-leq-L}, we have $F(z, Q_0) \leq (C+1) L(z) + C \mathcal L(Q_0) + C$ for every $z \in X$, and, combining with \ref{Hypo-domain}, we deduce that there exists $z \in X$ such that $\pi(z) = \pi(x)$ and $F(z, Q_0) \leq (C+1) \kappa + C \mathcal L(Q_0) + C$, yielding the inclusion \eqref{eq:OOpt-compact}. Now, by \ref{Hypo-compact} and \ref{Hypo-technical}, we deduce from \eqref{eq:OOpt-compact} that $\OOpt(Q_0)$ is compact.

Assume that $Q_0 \in \mathcal P_{m_0}(X)$ is a critical point of $\mathcal J$ in $\mathcal P_{m_0}(X)$. Let $\Opt\colon Y \rightrightarrows X$ be the set-valued map defined for $y \in Y$ by
\begin{equation*}
\Opt(y) = \left\{x \in X \suchthat \pi(x) = y \text{ and } F(x, Q_0) = \inf_{\substack{ z \in X\\ \pi(z)=y}} F(z, Q_0)\right\}.
\end{equation*}
Clearly, $\Opt(y) \subset \OOpt(Q_0)$ for every $y \in Y$, and it follows from Corollary~\ref{coro:exist-optimal-FxQ} that $\Opt(y) \neq \emptyset$ for every $y \in Y$. Using the closedness of $\OOpt(Q_0)$ and the continuity of $\pi$, it is easy to check that the graph of $\Opt$ is closed. Since $\OOpt(Q_0)$ is compact, \cite[Proposition~1.4.8]{Aubin2009Set} shows that $\OOpt$ is upper semicontinuous and, combining \cite[Proposition~1.4.4]{Aubin2009Set}, \cite[Proposition~III.11]{Castaing1977Convex}, and \cite[Theorem~8.1.3]{Aubin2009Set}, we conclude that the set valued map $\Opt$ admits a Borel-measurable selection, which we denote by $\phi\colon Y \to X$. 

Define $\widehat Q = \phi_{\#} m_0$ and notice that $\pi_{\#} \widehat Q = m_0$ and
\[
\int_{Y} \nu(y) \diff m_0(y) = \int_{X} F(x, Q_0) \diff \widehat Q(x),
\]
where $\nu$ is the function defined in \eqref{eq:defi-nu}. Since $Q_0$ is a critical point of $\mathcal J$ in $\mathcal P_{m_0}(X)$, we obtain, using Theorem~\ref{thm:nabla J is F}, that
\begin{equation} 
\label{eq:inequality-F-leq-nu-pi}
\int_{X} F(x, Q_0) \diff Q_0(x) \leq \int_{X} F(x, Q_0) \diff \widehat Q(x) = \int_{Y} \nu(y) \diff m_0(y) = \int_{X} \nu(\pi(x)) \diff Q_0(x).
\end{equation}
Let $\Theta\colon X \to \mathbb R$ be the function defined by $\Theta(x) = F(x, Q_0) - \nu(\pi(x))$. By definition, $\Theta(x) \geq 0$ for any $x \in X$. It follows from \eqref{eq:inequality-F-leq-nu-pi} that
\begin{equation*}
\int_{X} \Theta(x) \diff Q_0(x) \leq 0,
\end{equation*}
therefore, $\Theta(x) = 0$ for $Q_0$-a.e.\ $x$, which implies that $Q_0$ is an equilibrium of $\NAGall$.
\end{remark}

\section{Application to the mean field game model with pairwise interactions}
\label{sec:application}

We now come back to the mean field game model of Section~\ref{Cucker-Smale:The Model}, and our aim is to apply the results from Section~\ref{sec:abstract} for the non-atomic game $\NAGall$ to the former MFG. To do so, we let $X$, $Y$, $\pi$, $L$, and $H$ be defined from the MFG from Section~\ref{Cucker-Smale:The Model} as in Remark~\ref{remk:concrete} and we aim at proving that \ref{Hypo-XY-Polish}--\ref{Hypo-H-leq-L} are satisfied if we assume \ref{Hypo3-Omega}--\ref{individualCost}, and that the notions of equilibria from Definitions~\ref{def:equilibrium-3} and \ref{def:equilibrium-potential} coincide. Clearly, \ref{Hypo-XY-Polish} and \ref{Hypo-pi} are satisfied as soon as one assumes \ref{Hypo3-Omega} and \ref{Hypo3-Gamma}, \ref{Hypo-H-symmetric} follows from \ref{h convex and symmetric}\ref{item:H-symmetric}, and \ref{Hypo-domain} follows from \ref{individualCost}. We are thus left to show \ref{Hypo-LH}, \ref{Hypo-J}, \ref{Hypo-compact}, and \ref{Hypo-H-leq-L}. We start by the following result.

\begin{lemma}
\label{lemm:tau-psi-lsc}
Assume that \ref{Hypo3-Omega}, \ref{Hypo3-Gamma}, and \ref{Psi1} are satisfied and let $\tau$ be the function defined in \eqref{eq:defi3-tau}. Then the functions $\tau$ and $\Psi \circ \tau$ are lower semicontinuous.
\end{lemma}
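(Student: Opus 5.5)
To prove that $\tau$ is lower semicontinuous on $\Gamma$, I will take a sequence $(\gamma_n)_{n \in \mathbb N}$ in $\Gamma$ converging to $\gamma$ uniformly on compact time intervals, and show that $\tau(\gamma) \leq \liminf_{n \to +\infty} \tau(\gamma_n)$. If this $\liminf$ equals $+\infty$ there is nothing to prove. Otherwise, I extract a subsequence (not relabeled) along which $\tau(\gamma_n) \to t^\ast := \liminf_{n} \tau(\gamma_n) < +\infty$, and discard finitely many terms so that $\tau(\gamma_n) < +\infty$ for all $n$ and $\sup_n \tau(\gamma_n) \leq T$ for some $T > 0$.

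The first point is that the infimum in \eqref{eq:defi3-tau} is attained when it is finite. Indeed, by \ref{Hypo3-Gamma}, $\Xi$ is closed, and $\gamma_n$ is continuous, so the set $\{t \in \mathbb R_+ \suchthat \gamma_n(t) \in \Xi\}$ is closed. Since it is nonempty, it contains its infimum, giving $\gamma_n(\tau(\gamma_n)) \in \Xi$. Next, I estimate
\[
\abs{\gamma_n(\tau(\gamma_n)) - \gamma(t^\ast)} \leq \sup_{s \in [0, T]} \abs{\gamma_n(s) - \gamma(s)} + \abs{\gamma(\tau(\gamma_n)) - \gamma(t^\ast)}.
\]
The first term tends to zero by uniform convergence on $[0, T]$, and the second by continuity of $\gamma$. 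Hence $\gamma_n(\tau(\gamma_n)) \to \gamma(t^\ast)$, and the closedness of $\Xi$ yields $\gamma(t^\ast) \in \Xi$. Therefore $\tau(\gamma) \leq t^\ast$, which is the required inequality. The step that needs care is exactly this passage to the limit at the moving times $\tau(\gamma_n)$, which is handled by the uniform bound $T$ together with the triangle inequality above. No other obstacle is expected.

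For $\Psi \circ \tau$, which takes values in $\overline{\mathbb R}_+$, I adopt the natural convention $\Psi(+\infty) = +\infty$. This is consistent with the lower bound $\Psi(t) \geq at - b$ from \ref{Psi1}, so that $\Psi \colon \overline{\mathbb R}_+ \to \overline{\mathbb R}_+$ remains nondecreasing and lower semicontinuous. The composition of a nondecreasing lower semicontinuous function with a lower semicontinuous function is lower semicontinuous. To check this directly, let $\gamma_n \to \gamma$ and pick a subsequence realizing $\liminf_n \Psi(\tau(\gamma_n))$. Along it, extract a further subsequence such that $\tau(\gamma_n) \to s \in \overline{\mathbb R}_+$, where $s \geq \tau(\gamma)$ by the first part. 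If $s = +\infty$, then $\Psi(\tau(\gamma_n)) \geq a\tau(\gamma_n) - b \to +\infty$. If $s < +\infty$, then lower semicontinuity of $\Psi$ gives $\liminf_n \Psi(\tau(\gamma_n)) \geq \Psi(s)$, and since $\Psi$ is nondecreasing, $\Psi(s) \geq \Psi(\tau(\gamma))$. In both cases $\liminf_n \Psi(\tau(\gamma_n)) \geq \Psi(\tau(\gamma))$, which concludes the proof.
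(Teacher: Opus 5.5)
Your proof is correct and follows essentially the same route as the paper. You extract a subsequence realizing $\liminf_n \tau(\gamma_n)$, pass to the limit in $\gamma_n(\tau(\gamma_n)) \in \Xi$ using closedness of $\Xi$, and then combine monotonicity and lower semicontinuity of $\Psi$; along the way you make explicit several details the paper leaves implicit: that the infimum defining $\tau$ is attained, the uniform-convergence estimate at the moving times $\tau(\gamma_n)$, and the convention $\Psi(+\infty) = +\infty$ (justified by $\Psi(t) \geq at - b$) needed when $\tau$ takes the value $+\infty$.
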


\begin{proof}
Let $(\gamma_n)_{n\in \mathbb N}$ be a sequence in $\Gamma$ converging to some $\gamma \in \Gamma$ uniformly on compact time intervals. We want to prove that
\[
\liminf_{n\to \infty} \tau(\gamma_n) \ge \tau(\gamma).
\]
Let $\tau_\ast$ denote the left-hand side of the above inequality. If $\tau_\ast = +\infty$, there is nothing to prove, and thus we only consider the case $\tau_\ast < +\infty$. Let $(\gamma_{n_k})_{k\in \mathbb N}$ be a subsequence such that $\lim_{k\to \infty} \tau(\gamma_{n_k}) = \tau_\ast$. Since $\Xi$ is closed, we have, from the definition of $\tau$, that $\gamma_{n_k}(\tau(\gamma_{n_k})) \in \Xi$ for every $k \in \mathbb N$, and thus $\gamma(\tau_\ast) \in \Xi$. Hence, by the definition of $\tau$, we deduce that $\tau(\gamma) \leq \tau_\ast$, which is the desired inequality.

Regarding the function $\Psi \circ \tau$, since $\Psi$ is lower semicontinuous and nondecreasing, we obtain that, for any sequence $(\gamma_n)_{n\in \mathbb N}$ in $\Gamma$ converging to some $\gamma \in \Gamma$, we have
\[
\liminf_{n \to \infty} \Psi(\tau(\gamma_n)) \ge \Psi(\liminf_{n \to \infty} \tau(\gamma_n)) \ge \Psi(\tau(\gamma)),
\]
as required.
\end{proof}

We now provide the following technical result, which is a preliminary step towards showing the assertion on $L$ from \ref{Hypo-LH} as well as \ref{Hypo-compact}.

\begin{lemma}
\label{lemm:compactness}
Assume that \ref{Hypo3-Omega}--\ref{l is superlinear} and \ref{Psi1} are satisfied and let $\theta > 1$ be the constant from \ref{l is superlinear}\ref{item:lower-bound-ell} and $L$ be the function defined in Remark~\ref{remk:concrete}. Let $(\gamma_n)_{n \in \mathbb N}$ be a sequence in $\Gamma$ such that the sequence $(L(\gamma_n))_{n \in \mathbb N}$ is bounded. Then, for every $T > 0$, $(\gamma_n)_{n \in \mathbb N}$ is a bounded sequence in $W^{1, \theta}([0, T], \overline\Omega)$ and there exists a subsequence $(\gamma_{n_k})_{k \in \mathbb N}$ of $(\gamma_n)_{n \in \mathbb N}$ and an element $\gamma \in W^{1, \theta}_{\loc}(\mathbb R_+, \overline\Omega)$ such that $\gamma_{n_k} \to \gamma$ as $k \to +\infty$ in the topology of $\Gamma$ and, for every $T > 0$, we have the weak convergence $\dot\gamma_{n_k} \rightharpoonup \dot\gamma$ as $k \to +\infty$ in $L^\theta([0, T], \mathbb R^d)$.
\end{lemma}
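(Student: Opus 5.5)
The proof rests on the uniform $L^\theta$ bound on the derivatives, followed by Arzelà--Ascoli on each $[0,T]$ and a diagonal extraction. Let $M = \sup_n L(\gamma_n) < +\infty$. In particular each $\gamma_n$ is absolutely continuous. Since $\ell \geq 0$ and $\Psi \geq 0$ (as $\Psi$ takes values in $\mathbb R_+$), assumption \ref{l is superlinear}\ref{item:lower-bound-ell} gives
\[
\alpha \int_0^{+\infty} \abs{\dot\gamma_n(t)}^\theta \diff t \leq \int_0^{+\infty} \ell(t, \gamma_n(t), \dot\gamma_n(t)) \diff t \leq L(\gamma_n) \leq M .
\]
Thus $\norm{\dot\gamma_n}_{L^\theta(\mathbb R_+)} \leq (M/\alpha)^{1/\theta}$ uniformly in $n$. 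Moreover, $\gamma_n$ takes values in $\overline\Omega$, which is bounded by \ref{Hypo3-Omega}. Hence $\norm{\gamma_n}_{L^\theta([0,T])} \leq T^{1/\theta} \sup_{x \in \overline\Omega}\abs{x}$, and $(\gamma_n)$ is bounded in $W^{1,\theta}([0,T],\overline\Omega)$ for every $T>0$.

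Next, by Hölder's inequality, for $0 \leq s \leq t$,
\[
\abs{\gamma_n(t) - \gamma_n(s)} \leq \int_s^t \abs{\dot\gamma_n} \leq (t-s)^{1 - 1/\theta} (M/\alpha)^{1/\theta}.
\]
So the family is uniformly $(1-1/\theta)$-Hölder equicontinuous, using $\theta > 1$, and it is uniformly bounded with values in the compact set $\overline\Omega$. Arzelà--Ascoli on $[0,T]$ for $T = 1, 2, \dotsc$, combined with a diagonal argument, yields a subsequence $\gamma_{n_k}$ and a continuous $\gamma\colon \mathbb R_+ \to \overline\Omega$ (closedness of $\overline\Omega$ ensures the values stay there) with $\gamma_{n_k} \to \gamma$ uniformly on compact sets, i.e.\ in $\Gamma$.

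For the derivatives, $L^\theta([0,T],\mathbb R^d)$ is reflexive since $1 < \theta < \infty$, and $\dot\gamma_{n_k}$ is bounded there. Up to a further diagonal extraction over $T \in \mathbb N$, I will get $\dot\gamma_{n_k} \rightharpoonup v$ weakly in $L^\theta([0,T])$ for every $T$, for some $v \in L^\theta_{\loc}(\mathbb R_+,\mathbb R^d)$. The weak limits on different intervals are compatible by restriction, so they define a single $v$. To identify $v$ with $\dot\gamma$, pass to the limit in $\gamma_{n_k}(t) = \gamma_{n_k}(0) + \int_0^t \dot\gamma_{n_k}$. The left side converges pointwise by the uniform convergence. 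The integral converges by weak convergence tested against $\mathbbm 1_{[0,t]} e_i \in L^{\theta'}$. Hence $\gamma(t) = \gamma(0) + \int_0^t v$, so $\gamma \in W^{1,\theta}_{\loc}$ with $\dot\gamma = v$. Since the limit is now identified, it is independent of the subsequence, and the weak convergence holds along the whole extracted subsequence on every $[0,T]$.

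No step is genuinely hard. The only care needed is the noncompact time interval, which the diagonal extraction over $T \in \mathbb N$ handles, together with checking that the weak limits on nested intervals agree, which follows from the identification $v = \dot\gamma$. Note that \ref{Psi1} is used only through $\Psi \geq 0$, and the bound on $\tau$ coming from it is not needed for this lemma.
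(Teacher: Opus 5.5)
Your proof is correct and follows essentially the same route as the paper: a uniform $L^\theta$ bound on $\dot\gamma_n$ from \ref{l is superlinear}, compactness on each $[0,T]$ (strong in $\mathcal C([0,T])$, weak for the derivatives), and a diagonal extraction over $T \in \mathbb N$. The paper cites the compact embedding $W^{1,\theta}\hookrightarrow \mathcal C$ and weak relative compactness of bounded sets in $W^{1,\theta}$, whereas you prove the first via H\"older equicontinuity and Arzel\`a--Ascoli and identify the weak limit of the derivatives as $\dot\gamma$ explicitly; you also correctly keep the factor $\alpha$ in the bound, which the paper drops.
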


\begin{proof}
Let $\kappa > 0$ and $(\gamma_n)_{n \in \mathbb N}$ be a sequence in $\Gamma$ such that $L(\gamma_n) \leq \kappa$ for every $n \in \mathbb N$. Hence, by \ref{l is superlinear}, we deduce that, for every $n \in \mathbb N$,
\[
\int_0^{+\infty} \abs{\dot\gamma_n(t)}^\theta \diff t \leq \kappa.
\]
Thus, for every $T > 0$, the sequence $(\gamma_n)_{n \in \mathbb N}$ is bounded in $W^{1, \theta}([0, T], \overline\Omega)$. Recall also that the injection of $W^{1, \theta}([0, T], \overline\Omega)$ into $\mathcal C([0, T], \overline\Omega)$ is compact
and that bounded sets of $W^{1, \theta}([0, T], \overline\Omega)$ are relatively compact for the weak convergence.

We extract the required subsequence by a standard diagonal argument as follows. Since $(\gamma_n)_{n \in \mathbb N}$ is bounded in $W^{1, \theta}([0, 1], \overline\Omega)$, we extract a subsequence $(\gamma^1_n)_{n \in \mathbb N}$ of $(\gamma_n)_{n \in \mathbb N}$ which converges strongly in $\mathcal C([0, 1], \overline\Omega)$ and weakly in $W^{1, \theta}([0, 1], \overline\Omega)$ to some $\gamma^1 \in W^{1, \theta}([0, 1], \overline\Omega)$. Now, assuming that $k \in \mathbb N^\ast$ is such that we have constructed a subsequence $(\gamma^k_n)_{n \in \mathbb N}$ of $(\gamma_n)_{n \in \mathbb N}$ converging strongly in $\mathcal C([0, k], \overline\Omega)$ and weakly in $W^{1, \theta}([0, k], \overline\Omega)$ to some element $\gamma^k \in W^{1, \theta}([0, k], \overline\Omega)$, using the fact that $(\gamma^k_n)_{n \in \mathbb N}$ is bounded in $W^{1, \theta}([0, k+1], \overline\Omega)$, we extract a subsequence $(\gamma^{k+1}_n)_{n \in \mathbb N}$ of $(\gamma^k_n)_{n \in \mathbb N}$ converging strongly in $\mathcal C([0, k+1], \overline\Omega)$ and weakly in $W^{1, \theta}([0, k+1], \overline\Omega)$ to some $\gamma^{k+1} \in W^{1, \theta}([0, k+1], \overline\Omega)$, and clearly, by uniqueness of the limit, $\gamma^{k+1}$ coincides with $\gamma^k$ in their common domain $[0, k]$.

Define $\gamma\colon \mathbb R_+ \to \overline\Omega$ by setting $\gamma(t) = \gamma^{\ceil{t}}(t)$, and note that, by construction, we have $\gamma \in W^{1, \theta}_{\loc}(\mathbb R_+, \overline\Omega)$, since, for every $k \in \mathbb N^\ast$, $\gamma$ coincides with $\gamma^k$ in $[0, k]$. The diagonal sequence $(\gamma_n^n)_{n \in \mathbb N}$ is a subsequence of $(\gamma_n)_{n \in \mathbb N}$ which, by construction, converges strongly to $\gamma$ in $\Gamma$, i.e., uniformly in any compact subset of $\mathbb R_+$, and, for every $T > 0$, it also converges weakly to $\gamma$ in $W^{1, \theta}([0, T], \overline\Omega)$, yielding the conclusion.
\end{proof}

A first consequence of Lemma~\ref{lemm:compactness} is the following.

\begin{corollary}
\label{coro:L-lsc}
Assume that \ref{Hypo3-Omega}--\ref{l is superlinear} and \ref{Psi1} are satisfied and let $L$ be the function defined in Remark~\ref{remk:concrete}. Then $L$ is lower semicontinuous.
\end{corollary}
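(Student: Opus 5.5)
To prove that $L$ is lower semicontinuous, I would take a sequence $(\gamma_n)_{n \in \mathbb N}$ in $\Gamma$ converging to some $\gamma \in \Gamma$ uniformly on compact time intervals and show that $L(\gamma) \leq \liminf_{n \to +\infty} L(\gamma_n)$. If the right-hand side is $+\infty$ there is nothing to prove. Otherwise, I extract a subsequence (not relabeled) realizing the $\liminf$ as a limit, so that $(L(\gamma_n))_{n \in \mathbb N}$ is bounded and each $\gamma_n$ is absolutely continuous.

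By Lemma~\ref{lemm:compactness}, up to a further subsequence, $\gamma_n$ converges in $\Gamma$ to some $\widehat\gamma \in W^{1,\theta}_{\loc}(\mathbb R_+, \overline\Omega)$, and for every $T > 0$ we have $\dot\gamma_n \rightharpoonup \dot{\widehat\gamma}$ weakly in $L^\theta([0,T],\mathbb R^d)$. Uniqueness of limits in $\Gamma$ gives $\widehat\gamma = \gamma$. Hence $\gamma$ is absolutely continuous, and $L(\gamma)$ is given by the integral formula of Remark~\ref{remk:concrete}. I would then treat the two parts of $L$ separately.

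For the running cost, fix $T > 0$. I apply the classical lower semicontinuity theorem for integral functionals $\int_0^T \ell(t, x(t), p(t)) \diff t$ that are convex in $p$, for instance Ioffe's theorem or the result in Ambrosio--Fusco--Pallara. Its hypotheses hold here: $\ell$ is nonnegative, measurable in $t$, continuous in $(x,p)$ and convex in $p$ by \ref{l is superlinear}; the state converges strongly (uniformly on $[0,T]$); and the velocity converges weakly in $L^\theta$, hence weakly in $L^1$. This yields
\[
\int_0^T \ell(t,\gamma,\dot\gamma)\diff t \leq \liminf_{n\to+\infty} \int_0^T \ell(t,\gamma_n,\dot\gamma_n)\diff t \leq \liminf_{n\to+\infty}\int_0^{+\infty}\ell(t,\gamma_n,\dot\gamma_n)\diff t,
\]
using $\ell \geq 0$ in the last step. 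Letting $T \to +\infty$ and applying monotone convergence bounds the full running cost of $\gamma$ by the same $\liminf$. This step is the main technical point, since we need a lower semicontinuity theorem valid under only measurable dependence on $t$. If one wants to avoid citing such a theorem, the fallback is Mazur's lemma: pass to convex combinations of the $\dot\gamma_n$ that converge strongly, then combine convexity of $\ell$ in $p$, continuity in $x$ with uniform convergence of the states, and Fatou's lemma. The delicate part of this route is controlling the $x$-dependence, which uses equicontinuity of $\ell(t,\cdot,\cdot)$ on compact sets.

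For the exit-time term, Lemma~\ref{lemm:tau-psi-lsc} gives $\Psi(\tau(\gamma)) \leq \liminf_{n\to+\infty} \Psi(\tau(\gamma_n))$. Adding this to the running-cost bound, via superadditivity of $\liminf$, gives $L(\gamma) \leq \liminf_{n\to+\infty} L(\gamma_n)$, as required.
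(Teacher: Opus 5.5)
Your proposal is correct and follows essentially the same route as the paper. The paper reduces to a bounded subsequence, applies Lemma~\ref{lemm:compactness}, and uses a lower semicontinuity theorem for integrands convex in $p$ on $[0,T]$; it cites Giusti's Theorem~4.5 where you cite Ioffe or Ambrosio--Fusco--Pallara. It then lets $T \to +\infty$ and handles the exit-time term with Lemma~\ref{lemm:tau-psi-lsc}; your Mazur-lemma fallback is unnecessary.
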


\begin{proof}
Let $(\gamma_n)_{n \in \mathbb N}$ be a sequence in $\Gamma$ such that $\gamma_n \to \gamma$ as $n \to +\infty$ for some $\gamma \in \Gamma$. We need to prove that
\begin{equation}
\label{eq:L-lsc}
L(\gamma) \leq \liminf_{n \to +\infty} L(\gamma_n).
\end{equation}
If the right-hand side of \eqref{eq:L-lsc} is equal to $+\infty$, there is nothing to prove, so we only consider the case where it is finite. In this case, we extract from $(\gamma_n)_{n \in \mathbb N}$ a subsequence $(\gamma_{n_k})_{k \in \mathbb N}$ such that $(L(\gamma_{n_k}))_{k \in \mathbb N}$ converges to the right-hand side of \eqref{eq:L-lsc}. For simplicity, we will denote $(\gamma_{n_k})_{k \in \mathbb N}$ by $(\gamma_n)_{n \in \mathbb N}$ in the sequel. Up to removing at most finitely many elements of the sequence, we have $L(\gamma_n) < +\infty$ for every $n \in \mathbb N$, and thus $(L(\gamma_n))_{n \in \mathbb N}$ is bounded. Hence, by Lemma~\ref{lemm:compactness}, up to extracting once again a subsequence, which we still denote by the same notation, we deduce that $\gamma \in W^{1, \theta}_{\loc}(\mathbb R_+, \overline\Omega)$ and that, for every $T > 0$, we have the weak convergence $\dot\gamma_n \rightharpoonup \dot\gamma$ in $L^\theta([0, T], \mathbb R^d)$.

By Lemma~\ref{lemm:tau-psi-lsc}, we have $\Psi(\tau(\gamma)) \leq \liminf_{n \to +\infty} \Psi(\tau(\gamma_n))$. In addition, using \cite[Theorem~4.5]{Giusti2003Direct}, we obtain that, for every $T > 0$,
\[
\int_0^T \ell(t, \gamma(t), \dot\gamma(t)) \diff t \leq \liminf_{n \to +\infty} \int_0^T \ell(t, \gamma_n(t), \dot\gamma_n(t)) \diff t \leq \liminf_{n \to +\infty} \int_0^{+\infty} \ell(t, \gamma_n(t), \dot\gamma_n(t)) \diff t,
\]
and, since this holds for every $T > 0$, we deduce that
\[
\int_0^{+\infty} \ell(t, \gamma(t), \dot\gamma(t)) \diff t \leq \liminf_{n \to +\infty} \int_0^{+\infty} \ell(t, \gamma_n(t), \dot\gamma_n(t)) \diff t,
\]
yielding the conclusion.
\end{proof}

We also deduce from Lemma~\ref{lemm:compactness} and Corollary~\ref{coro:L-lsc} the following result.

\begin{corollary}
\label{coro:compactness}
Assume that \ref{Hypo3-Omega}--\ref{l is superlinear} and \ref{Psi1} are satisfied and let $L$ be the function defined in Remark~\ref{remk:concrete}. Then \ref{Hypo-compact} is satisfied.
\end{corollary}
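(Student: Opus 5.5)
To show \ref{Hypo-compact}, we fix $\kappa > 0$ and set $K_\kappa = \{\gamma \in \Gamma \suchthat L(\gamma) \leq \kappa\}$. The plan is to prove that $K_\kappa$ is closed and sequentially relatively compact in $\Gamma$. Since $\Gamma$ is a metrizable (indeed Polish) space, sequential compactness coincides with compactness.

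Closedness follows directly from Corollary~\ref{coro:L-lsc}. Because $L$ is lower semicontinuous, each sublevel set $K_\kappa$ is closed in $\Gamma$.

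For sequential compactness, we take an arbitrary sequence $(\gamma_n)_{n \in \mathbb N}$ in $K_\kappa$. The sequence $(L(\gamma_n))_{n \in \mathbb N}$ is then bounded by $\kappa$, so Lemma~\ref{lemm:compactness} applies. It provides a subsequence $(\gamma_{n_k})_{k \in \mathbb N}$ and some $\gamma \in W^{1,\theta}_{\loc}(\mathbb R_+, \overline\Omega)$ with $\gamma_{n_k} \to \gamma$ in the topology of $\Gamma$. This means uniform convergence on compact intervals, and since $\overline\Omega$ is closed, the limit $\gamma$ is continuous with values in $\overline\Omega$. Hence $\gamma \in \Gamma$. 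Lower semicontinuity of $L$ from Corollary~\ref{coro:L-lsc} then gives
\[
L(\gamma) \leq \liminf_{k \to +\infty} L(\gamma_{n_k}) \leq \kappa,
\]
so $\gamma \in K_\kappa$. Therefore every sequence in $K_\kappa$ has a subsequence converging in $K_\kappa$, and $K_\kappa$ is compact.

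No step is a genuine obstacle: the substantive work (the diagonal extraction and the lower semicontinuity of the integral functional and of $\Psi\circ\tau$) has already been done in Lemma~\ref{lemm:compactness} and Corollary~\ref{coro:L-lsc}. The only point requiring care is the passage from sequential compactness to compactness, which is valid because $\Gamma$ is metrizable with the topology of local uniform convergence.
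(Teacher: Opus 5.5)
Your proposal is correct and follows essentially the same argument as the paper: extract a convergent subsequence via Lemma~\ref{lemm:compactness}, then use the lower semicontinuity of $L$ from Corollary~\ref{coro:L-lsc} to keep the limit in the sublevel set. Your separate closedness step and the remark that sequential compactness suffices in the metrizable space $\Gamma$ are harmless additions that the paper leaves implicit.
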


\begin{proof}
Let $\kappa > 0$ and consider a sequence $(\gamma_n)_{n \in \mathbb N}$ in $\Gamma$ such that $L(\gamma_n) \leq \kappa$ for every $n \in \mathbb N$. Then, by Lemma~\ref{lemm:compactness}, $(\gamma_n)_{n \in \mathbb N}$ admits a subsequence, still denoted by $(\gamma_n)_{n \in \mathbb N}$, converging to some $\gamma \in W^{1, \theta}_{\loc}(\mathbb R_+, \overline\Omega)$, both in the topology of $\Gamma$ and with $\dot\gamma_n \rightharpoonup \dot\gamma$ in $L^\theta([0, T], \mathbb R^d)$ for every $T > 0$. Since $L$ is lower semicontinuous by Corollary~\ref{coro:L-lsc} and $\gamma_n \to \gamma$ in $\Gamma$ as $n \to +\infty$, we deduce that $L(\gamma) \leq \liminf_{n \to +\infty} L(\gamma_n) \leq \kappa$, yielding the conclusion.
\end{proof}

Let us now show that $J$ is also lower semicontinuous and $H$ is Borel measurable.

\begin{lemma}
\label{lemm:J-lsc}
Assume that \ref{Hypo3-Omega}--\ref{Psi1} are satisfied, let $L$ and $H$ be the functions defined in Remark~\ref{remk:concrete}, and $J$ be the function defined in \eqref{eq:def-J}. Then $J$ is lower semicontinuous and $H$ is Borel measurable.
\end{lemma}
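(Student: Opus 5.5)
I plan to prove lower semicontinuity of $J$ directly by sequences. Let $(\gamma_n,\widetilde\gamma_n)\to(\gamma,\widetilde\gamma)$ in $\Gamma\times\Gamma$. We must show $J(\gamma,\widetilde\gamma)\le\liminf J(\gamma_n,\widetilde\gamma_n)$.

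\emph{Reduction.} We may assume the right-hand side is finite. After extracting a subsequence that realizes the liminf, $L(\gamma_n)$ and $L(\widetilde\gamma_n)$ are bounded, and then $H(\gamma_n,\widetilde\gamma_n)<+\infty$ is given by the integral formula. By Lemma~\ref{lemm:compactness}, after a further extraction, $\gamma,\widetilde\gamma\in W^{1,\theta}_{\loc}$ and $(\dot\gamma_n,\dot{\widetilde\gamma}_n)\rightharpoonup(\dot\gamma,\dot{\widetilde\gamma})$ weakly in $L^\theta([0,T],\mathbb R^{2d})$ for every $T$. By Corollary~\ref{coro:L-lsc}, $L(\gamma)$ and $L(\widetilde\gamma)$ are finite, so $H(\gamma,\widetilde\gamma)$ is also given by the integral formula.

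\emph{Splitting.} I do not expect $L$ and $H$ to be separately lsc in a way that can simply be added, because $\Psi\circ\tau$ is only lsc. Instead I will bound the three pieces separately and combine them with superadditivity of the liminf:
\[
\liminf (a_n+b_n+c_n)\ge \liminf a_n+\liminf b_n+\liminf c_n.
\]
The two $L$ terms are handled by Corollary~\ref{coro:L-lsc}. The real work is
\[
H(\gamma,\widetilde\gamma)\le\liminf H(\gamma_n,\widetilde\gamma_n).
\]

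\emph{Main obstacle: the variable upper limit $\tau(\gamma_n)\wedge\tau(\widetilde\gamma_n)$.} By Lemma~\ref{lemm:tau-psi-lsc}, $\liminf(\tau(\gamma_n)\wedge\tau(\widetilde\gamma_n))\ge\tau(\gamma)\wedge\tau(\widetilde\gamma)=:\sigma$. Fix a finite $S<\sigma$, or any finite $S\le\sigma$ if one prefers. For $n$ large, $\tau(\gamma_n)\wedge\tau(\widetilde\gamma_n)>S$ when $S<\sigma$, so by $h\ge0$,
\[
H(\gamma_n,\widetilde\gamma_n)\ge\int_0^S h(t,\gamma_n,\widetilde\gamma_n,\dot\gamma_n,\dot{\widetilde\gamma}_n)\diff t.
\]
The integrand $h$ is a nonnegative Carath\'eodory function, continuous in the state and convex in $(p,\widetilde p)$. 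The states converge uniformly on $[0,S]$ and the velocities converge weakly in $L^\theta\subset L^1$. The semicontinuity theorem \cite[Theorem~4.5]{Giusti2003Direct}, applied exactly as in Corollary~\ref{coro:L-lsc} but on $\mathbb R^{2d}$-valued curves, then gives
\[
\int_0^S h(t,\gamma,\widetilde\gamma,\dot\gamma,\dot{\widetilde\gamma})\diff t\le\liminf H(\gamma_n,\widetilde\gamma_n).
\]
Letting $S\uparrow\sigma$, which also covers $\sigma=+\infty$, monotone convergence yields the claim. Because of the strict inequality $S<\sigma$, the case $\sigma<+\infty$ needs no special treatment.

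\emph{Borel measurability of $H$.} I will write $H=J-L(x)-L(\widetilde x)$ on the set $\{L(x)<\infty,\ L(\widetilde x)<\infty\}$, which is Borel since $L$ is lsc, and set $H=+\infty$ on its complement. Since $J$ is lsc, hence Borel, and $L$ is Borel, this makes $H$ Borel measurable.
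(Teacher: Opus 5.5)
Your proposal is correct and follows essentially the same route as the paper. Both arguments reduce to a subsequence along which $J$ is bounded and handle the $L$ terms with Lemma~\ref{lemm:compactness} and Corollary~\ref{coro:L-lsc}. Both then truncate the $H$ integral at times strictly below $\sigma=\tau(\gamma)\wedge\tau(\widetilde\gamma)$, using Lemma~\ref{lemm:tau-psi-lsc} and $h\geq 0$, apply \cite[Theorem~4.5]{Giusti2003Direct} on each truncated interval, let the truncation time increase to $\sigma$, and obtain Borel measurability of $H$ from $H=J-L(\gamma)-L(\widetilde\gamma)$ on $\dom L\times\dom L$.

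Two minor points are cosmetic only. Your parenthetical allowing $S=\sigma$ is not justified, since you do not get $\tau(\gamma_n)\wedge\tau(\widetilde\gamma_n)>\sigma$. The trivial case $\sigma=0$, where no $S\in[0,\sigma)$ exists, should be stated separately, as the paper does.
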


\begin{proof}
We note first that it suffices to prove the assertion on $J$. Indeed, since $L$ is lower semicontinuous by Corollary~\ref{coro:L-lsc}, it is Borel measurable, and thus the set $\{(\gamma, \widetilde\gamma) \in \Gamma \times \Gamma \suchthat L(\gamma) = +\infty \text{ or } L(\widetilde\gamma) = +\infty\}$ is Borel measurable. Since $H$ is constant and equal to $+\infty$ in this set, it suffices to show that $H$ is Borel measurable in the complementary of this set, that is, on $\{(\gamma, \widetilde\gamma) \in \Gamma \times \Gamma \suchthat L(\gamma) < +\infty \text{ and } L(\widetilde\gamma) < +\infty\}$. In this set, we have $H(\gamma, \widetilde\gamma) = J(\gamma, \widetilde\gamma) - L(\gamma) - L(\widetilde\gamma)$, and thus, if $J$ is shown to be lower semicontinuous, it will also be Borel measurable, and hence $H$ will be Borel measurable as the difference of Borel measurable functions.

Let us then prove that $J$ is lower semicontinuous. Let $(\gamma_n, \widetilde\gamma_n)_{n \in \mathbb N}$ be a sequence in $\Gamma \times \Gamma$ converging to some $(\gamma, \widetilde\gamma) \in \Gamma \times \Gamma$. We want to prove that
\begin{equation}
\label{eq:to-prove-J}
J(\gamma, \widetilde\gamma) \leq \liminf_{n \to +\infty} J(\gamma_n, \widetilde\gamma_n).
\end{equation}
As in the proof of Corollary~\ref{coro:L-lsc}, we consider only the case where the right-hand side of the above inequality is finite and, up to extracting a subsequence, we also have that $J(\gamma_n, \widetilde\gamma_n)$ converges as $n \to +\infty$ to $\liminf_{n \to +\infty} J(\gamma_n, \widetilde\gamma_n)$ and that the sequence $(J(\gamma_n, \widetilde\gamma_n))_{n \in \mathbb N}$ is bounded. In particular, $(L(\gamma_n))_{n \in \mathbb N}$ and $(L(\widetilde\gamma_n))_{n \in \mathbb N}$ are bounded sequences, so, by Lemma~\ref{lemm:compactness}, we deduce that $\gamma$ and $\widetilde\gamma$ belong to $W^{1, \theta}_{\loc}(\mathbb R_+, \overline\Omega)$ and that, up to a further subsequence extraction, $\dot\gamma_n \rightharpoonup \dot\gamma$ and $\dot{\widetilde\gamma}_n \rightharpoonup \dot{\widetilde\gamma}$ as $n \to +\infty$ in $L^\theta([0, T], \mathbb R^d)$, for every $T > 0$.

Note that, by Corollary~\ref{coro:L-lsc}, we already have
\[
L(\gamma) \leq \liminf_{n \to +\infty} L(\gamma_n), \qquad L(\widetilde\gamma) \leq \liminf_{n \to +\infty} L(\widetilde\gamma_n),
\]
so we are left to show that
\begin{equation}
\label{eq:to-prove-H}
H(\gamma, \widetilde\gamma) \leq \liminf_{n \to +\infty} H(\gamma_n, \widetilde\gamma_n).
\end{equation}

Let us denote $\sigma_n = \tau(\gamma_n) \wedge \tau(\widetilde\gamma_n)$ and $\sigma = \tau(\gamma) \wedge \tau(\widetilde\gamma)$ and recall that, by Lemma~\ref{lemm:tau-psi-lsc}, we have $\sigma \leq \liminf_{n \to +\infty} \sigma_n$. We have nothing to prove in the case $\sigma = 0$ since, in this case, the left-hand side of \eqref{eq:to-prove-H} is zero. We thus only consider the case $\sigma > 0$ from now on. Fix an increasing sequence $(T_k)_{k \in \mathbb N}$ in $\mathbb R_+$ with $\lim_{k \to +\infty} T_k = \sigma$. For each $k \in \mathbb N$, applying \cite[Theorem~4.5]{Giusti2003Direct} to the interval $[0, T_k]$, we obtain that
\begin{equation}
\label{eq:tricky}
\int_0^{T_k} h(t, \gamma(t), \widetilde\gamma(t), \dot\gamma(t), \dot{\widetilde\gamma}(t)) \diff t \leq \liminf_{n \to +\infty} \int_0^{T_k} h(t, \gamma_n(t), \widetilde\gamma_n(t), \dot\gamma_n(t), \dot{\widetilde\gamma}_n(t)) \diff t.
\end{equation}
Since $T_k < \sigma \leq \liminf_{n \to +\infty} \sigma_n$, we have $\sigma_n > T_k$ for $n$ large enough (depending on $k$), and thus, as $h$ is nonnegative, we get
\[
\int_0^{T_k} h(t, \gamma_n(t), \widetilde\gamma_n(t), \dot\gamma_n(t), \dot{\widetilde\gamma}_n(t)) \diff t \leq \int_0^{\sigma_n} h(t, \gamma_n(t), \widetilde\gamma_n(t), \dot\gamma_n(t), \dot{\widetilde\gamma}_n(t)) \diff t
\]
for $n$ large enough. Hence, taking the $\liminf$ as $n \to +\infty$, we obtain, combining with \eqref{eq:tricky}, that
\[
\int_0^{T_k} h(t, \gamma(t), \widetilde\gamma(t), \dot\gamma(t), \dot{\widetilde\gamma}(t)) \diff t \leq \liminf_{n \to +\infty} \int_0^{\sigma_n} h(t, \gamma_n(t), \widetilde\gamma_n(t), \dot\gamma_n(t), \dot{\widetilde\gamma}_n(t)) \diff t.
\]
Since this holds true for every $k \in \mathbb N$, we then obtain that
\[
\int_0^{\sigma} h(t, \gamma(t), \widetilde\gamma(t), \dot\gamma(t), \dot{\widetilde\gamma}(t)) \diff t \leq \liminf_{n \to +\infty} \int_0^{\sigma_n} h(t, \gamma_n(t), \widetilde\gamma_n(t), \dot\gamma_n(t), \dot{\widetilde\gamma}_n(t)) \diff t,
\]
as required.
\end{proof}

\begin{remark}
The proof of Lemma~\ref{lemm:J-lsc} does not imply that $H$ is lower semicontinuous. Indeed, although we prove the inequality \eqref{eq:to-prove-H}, this is not done for any sequence $(\gamma_n, \widetilde\gamma_n)_{n \in \mathbb N}$ converging in $\Gamma \times \Gamma$: as we want to prove \eqref{eq:to-prove-J}, we restrict our attention to sequences for which the right-hand side of \eqref{eq:to-prove-J} is finite. Hence, our proof of the inequality \eqref{eq:to-prove-H} does not take into account sequences $(\gamma_n, \widetilde\gamma_n)_{n \in \mathbb N}$ for which the right-hand side of \eqref{eq:to-prove-H} is finite, but that of \eqref{eq:to-prove-J} is infinite. One should also take these sequences into account in order to obtain lower semicontinuity of $H$, and the main issue is that we cannot apply the compactness result from Lemma~\ref{lemm:compactness} to such sequences.
\end{remark}

Our next result establishes \ref{Hypo-H-leq-L} as a consequence of \ref{Hypo3-Omega}--\ref{Psi1}.

\begin{lemma}
\label{lemm:proof-of-H-leq-L}
Assume that \ref{Hypo3-Omega}--\ref{Psi1} are satisfied and let $L$ and $H$ be the functions defined in Remark~\ref{remk:concrete}. Then \ref{Hypo-H-leq-L} is satisfied.
\end{lemma}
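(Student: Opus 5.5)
We need to show that there is $C > 0$ with $H(\gamma, \widetilde\gamma) \leq C(L(\gamma) + L(\widetilde\gamma) + 1)$ for all $(\gamma, \widetilde\gamma) \in \Gamma \times \Gamma$. The plan is to split into cases according to whether $L(\gamma)$ and $L(\widetilde\gamma)$ are finite. If either of them is $+\infty$, then by the definition in Remark~\ref{remk:concrete} we have $H(\gamma, \widetilde\gamma) = +\infty$. In that case the right-hand side is also $+\infty$, so the inequality holds trivially.

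So assume both $L(\gamma)$ and $L(\widetilde\gamma)$ are finite, so that $\gamma$ and $\widetilde\gamma$ are absolutely continuous. Set $\sigma = \tau(\gamma) \wedge \tau(\widetilde\gamma)$. By the growth bound in \ref{h convex and symmetric},
\[
H(\gamma, \widetilde\gamma) \leq C_h \int_0^{\sigma} \bigl(\abs{\dot\gamma(t)}^\beta + \abs{\dot{\widetilde\gamma}(t)}^\beta\bigr) \diff t ,
\]
and it suffices to bound $\int_0^{\tau(\gamma)} \abs{\dot\gamma}^\beta \diff t$ by a multiple of $L(\gamma) + 1$. The symmetric term is handled the same way, using $\sigma \leq \tau(\widetilde\gamma)$.

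The key pointwise estimate is $\abs{p}^\beta \leq 1 + \abs{p}^\theta$ for all $p \in \mathbb R^d$, valid since $0 < \beta \leq \theta$: split according to whether $\abs{p} \leq 1$ or $\abs{p} > 1$. Integrating over $[0, \tau(\gamma)]$ and using \ref{l is superlinear}\ref{item:lower-bound-ell} gives
\[
\int_0^{\tau(\gamma)} \abs{\dot\gamma}^\beta \diff t \leq \tau(\gamma) + \frac{1}{\alpha}\int_0^{+\infty} \ell(t, \gamma(t), \dot\gamma(t)) \diff t .
\]
The main point, and the only place where the free final time matters, is to control $\tau(\gamma)$. For this we use \ref{Psi1}: since $\Psi(\tau(\gamma)) \geq a\tau(\gamma) - b$ and $L(\gamma) < +\infty$ forces $\tau(\gamma) < +\infty$, we get
\[
\tau(\gamma) \leq \frac{\Psi(\tau(\gamma)) + b}{a} .
\]
Since $\ell$ and $\Psi$ are nonnegative, both $\int \ell$ and $\Psi(\tau(\gamma))$ are at most $L(\gamma)$. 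Hence $\int_0^{\tau(\gamma)} \abs{\dot\gamma}^\beta \leq (1/a + 1/\alpha) L(\gamma) + b/a$.

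Collecting the two symmetric bounds yields
\[
H(\gamma, \widetilde\gamma) \leq C_h\bigl(1/a + 1/\alpha\bigr)\bigl(L(\gamma) + L(\widetilde\gamma)\bigr) + 2C_h b/a .
\]
This gives \ref{Hypo-H-leq-L} with $C = C_h \max\{1/a + 1/\alpha,\, 2b/a\}$.

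The step requiring the most care is the truncation at $\sigma$. Without it, the integral of $\abs{\dot\gamma}^\beta$ over $\mathbb R_+$ with $\beta < \theta$ need not be controlled by $\int \abs{\dot\gamma}^\theta$, because the time interval is infinite. The bound on $\tau$ coming from $\Psi$ is exactly what closes this gap.
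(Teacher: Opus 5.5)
Your proof is correct and follows essentially the same route as the paper. Both arguments bound $H$ via the growth condition in \ref{h convex and symmetric}, truncate each integral at the corresponding exit time, control $\abs{\dot\gamma}^\beta$ by a constant plus $\abs{\dot\gamma}^\theta$, and bound $\tau(\gamma)$ using $\Psi(t) \geq at - b$. The differences are cosmetic: you use the cruder pointwise bound $\abs{p}^\beta \leq 1 + \abs{p}^\theta$ where the paper uses Young's inequality with weights $\beta/\theta$ and $(\theta-\beta)/\theta$, and you spell out the trivial case $L = +\infty$ that the paper leaves implicit.
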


\begin{proof}
Consider the constants $\alpha$, $\theta$, $C$, $\beta$, $a$, and $b$ from \ref{l is superlinear}--\ref{Psi1}. For every $\gamma \in \Gamma$ with $L(\gamma) < +\infty$, we have, using Young's inequality, that
\begin{align*}
\int_0^{\tau(\gamma)} \abs{\dot\gamma(t)}^\beta \diff t & \leq \frac{\beta}{\theta} \int_0^{+\infty} \abs{\dot\gamma(t)}^\theta \diff t + \frac{\theta - \beta}{\theta} \tau(\gamma) \\
& \leq \frac{\beta}{\alpha\theta} \int_0^{+\infty} \ell(t, \gamma(t), \dot\gamma(t)) \diff t + \frac{(\theta - \beta)(\Psi(\tau(\gamma)) + b)}{a \theta} \\
& \leq M \left(L(\gamma) + \frac{1}{2}\right),
\end{align*}
where $M = \max\left\{\frac{\beta}{\alpha\theta}, \frac{\theta - \beta}{a \theta}, \frac{2 b (\theta - \beta)}{a \theta}\right\}$. On the other hand, using \ref{h convex and symmetric}, for every $(\gamma, \widetilde\gamma) \in \Gamma \times \Gamma$ with $L(\gamma) < +\infty$ and $L(\widetilde\gamma) < +\infty$, we have
\[
\begin{aligned}
H(\gamma, \widetilde \gamma) & \leq \int_{0}^{\tau(\gamma) \wedge \tau(\widetilde \gamma)} C (\abs{\dot{\gamma}(t)}^{\beta} +\abs{\dot{\widetilde \gamma}(t)}^{\beta})\diff t \\
&\leq C \left(\int_{0}^{\tau(\gamma)} \abs{\dot \gamma(t)}^{\beta} \diff t \right) + C  \left(\int_{0}^{\tau(\widetilde \gamma)} \abs{\dot{\widetilde \gamma}(t)}^{\beta} \diff t\right) \\
& \leq C M (L(\gamma) + L(\widetilde\gamma) + 1),
\end{aligned}
\]
yielding the conclusion.
\end{proof}

We are thus finally in position to conclude that the mean field game from Section~\ref{Cucker-Smale:The Model} can be studied through the non-atomic game $\NAGall$.

\begin{proposition}
\label{prop:same-equilibria}
Assume that \ref{Hypo3-Omega}--\ref{individualCost} are satisfied. Consider the non-atomic game $\NAGall$ with $X$, $Y$, $\pi$, $L$, and $H$ defined as in Remark~\ref{remk:concrete}. Then \ref{Hypo-XY-Polish}--\ref{Hypo-H-leq-L} are satisfied for $\NAGall$. In addition, $Q$ is an equilibrium of the mean field game from Section~\ref{Cucker-Smale:The Model} with initial condition $m_0 \in \mathcal P(\overline\Omega)$ if and only if it is an equilibrium of $\NAGall$.
\end{proposition}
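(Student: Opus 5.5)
The proof has two parts: first collect the hypotheses \ref{Hypo-XY-Polish}--\ref{Hypo-H-leq-L} from the results already established, then compare the two equilibrium notions term by term.

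For the first part, \ref{Hypo-XY-Polish} holds because $\Gamma$ is Polish (see the notation paragraph) and $\overline\Omega$ is a closed subset of $\mathbb R^d$. \ref{Hypo-pi} holds because $e_0$ is continuous for uniform convergence on compact sets. For \ref{Hypo-LH}, Corollary~\ref{coro:L-lsc} gives lower semicontinuity of $L$ and Lemma~\ref{lemm:J-lsc} gives Borel measurability of $H$. For \ref{Hypo-H-symmetric}, the case $L(\gamma)<+\infty$, $L(\widetilde\gamma)<+\infty$ follows from \ref{h convex and symmetric}\ref{item:H-symmetric} together with the symmetry of $\tau(\gamma)\wedge\tau(\widetilde\gamma)$; otherwise both $H(\gamma,\widetilde\gamma)$ and $H(\widetilde\gamma,\gamma)$ equal $+\infty$. \ref{Hypo-J} is Lemma~\ref{lemm:J-lsc}. \ref{Hypo-domain} is \ref{individualCost}, which gives the same $\kappa$. \ref{Hypo-compact} is Corollary~\ref{coro:compactness}, and \ref{Hypo-H-leq-L} is Lemma~\ref{lemm:proof-of-H-leq-L}.

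For the second part, write $F_{\mathrm{MFG}}$ for the cost in \eqref{eq:cost}, with its conventions, and $F_{\mathrm{NAG}}$ for the cost in \eqref{eq:abstract-cost} with $L,H$ as in Remark~\ref{remk:concrete}. Both definitions require ${e_0}_{\#}Q=m_0$, which is the same as $\pi_{\#}Q=m_0$, so it remains to compare the costs. The key claim is the following.
\begin{quote}
If $\int F_{\mathrm{MFG}}(\cdot,Q)\diff Q<+\infty$ or $\int F_{\mathrm{NAG}}(\cdot,Q)\diff Q<+\infty$, then $Q$-almost every $\widetilde\gamma$ satisfies $L(\widetilde\gamma)<+\infty$, and $F_{\mathrm{MFG}}(\omega,Q)=F_{\mathrm{NAG}}(\omega,Q)$ for every $\omega\in\Gamma$.
\end{quote}

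\textbf{The $L$-finite claim.} Under the NAG finiteness condition, $\int L\diff Q<\int F_{\mathrm{NAG}}\diff Q<+\infty$. Under the MFG finiteness condition, $F_{\mathrm{MFG}}(\gamma,Q)<+\infty$ for $Q$-a.e.\ $\gamma$. Since $F_{\mathrm{MFG}}\ge \int_0^\infty\ell+\Psi\circ\tau$, we get $L(\gamma)<+\infty$ for $Q$-a.e.\ $\gamma$; here absolute continuity of $\gamma$ is part of the MFG convention.

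\textbf{Equality of the costs.} Fix $\omega\in\Gamma$.
\begin{itemize}
\item If $L(\omega)=+\infty$, then both costs are $+\infty$. For the MFG cost this holds because either $\omega$ is not absolutely continuous or the $\ell$/$\Psi$ part is infinite. For the NAG cost it holds because $L(\omega)=+\infty$ appears directly.
\item If $L(\omega)<+\infty$, then, since $Q$-a.e.\ $\widetilde\gamma$ has $L(\widetilde\gamma)<+\infty$, the function $H(\omega,\cdot)$ agrees $Q$-a.e.\ with the interaction integrand of \eqref{eq:cost}.
\begin{itemize}
\item If that integrand is $Q$-integrable, the two costs coincide.
\item If it is not, then $F_{\mathrm{MFG}}(\omega,Q)=+\infty$ by convention. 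Because $h\ge0$, non-integrability means the integral is $+\infty$, so $F_{\mathrm{NAG}}(\omega,Q)=+\infty$ as well.
\end{itemize}
\end{itemize}

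\textbf{Measures charging non-absolutely continuous curves.} If $Q$ gives positive mass to curves that are not absolutely continuous, then $F_{\mathrm{MFG}}\equiv+\infty$ by convention. In that case $\mathcal L(Q)=+\infty$, so neither notion yields an equilibrium, which is consistent.

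\textbf{Conclusion.} Under the finiteness condition, which the claim shows is shared by both notions, the equilibrium conditions are identical: the integrals coincide and the infima over $\{\omega:\omega(0)=\gamma(0)\}$ coincide. Hence the two notions of equilibrium agree.

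The main delicate step is this bookkeeping of the $+\infty$ conventions, in particular checking that $Q$-a.e.\ finiteness of $L$ follows from either finiteness condition. Once that is in place, $H(\omega,\cdot)$ and the MFG interaction integrand differ only on a $Q$-null set, and the rest is routine.
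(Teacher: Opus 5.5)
Your proposal is correct and follows essentially the same route as the paper: you read off \ref{Hypo-XY-Polish}--\ref{Hypo-H-leq-L} from the preceding lemmas, show that either finiteness condition forces $Q(\dom L)=1$, and conclude that the two costs then agree for every $\gamma\in\Gamma$. Your handling of the $+\infty$ conventions is slightly more explicit than the paper's; the only slip is the strict inequality $\int L\diff Q<\int F_{\mathrm{NAG}}(\cdot,Q)\diff Q$, which should read $\le$ and does not affect the argument.
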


\begin{proof}
The fact that \ref{Hypo-XY-Polish}--\ref{Hypo-H-leq-L} are satisfied was already established in the previous results. As for the second part of the statement, note that there is a subtlety to be addressed: the term $H(\gamma, \widetilde\gamma)$ does not necessarily coincide with the integral $\int_{0}^{\tau(\gamma)\wedge \tau(\widetilde \gamma)} h(t, \gamma(t),\allowbreak \widetilde{\gamma}(t),\allowbreak \dot{\gamma}(t),\allowbreak \dot{\widetilde{\gamma}}(t)) \diff t$, since the first one is defined to be $+\infty$ whenever $L(\gamma) = +\infty$ or $L(\widetilde\gamma) = +\infty$, but the second one can be finite even in some cases where $L(\gamma) = +\infty$ or $L(\widetilde\gamma) = +\infty$. Hence, the functions defined in \eqref{eq:cost} and \eqref{eq:abstract-cost} do not necessarily coincide for all $(\gamma, Q)$. In this proof, we denote by $F_1(\gamma, Q)$ the function defined in \eqref{eq:cost} and by $F_2(\gamma, Q)$ the function defined in \eqref{eq:abstract-cost}.

Notice first that, if $Q \in \mathcal P(\Gamma)$ is such that $Q(\dom L) = 1$, then $F_1(\gamma, Q) = F_2(\gamma, Q)$ for every $\gamma \in \Gamma$. Indeed, recall that, for every $(\gamma, \widetilde\gamma) \in \dom L \times \dom L$, we have
\[
H(\gamma, \widetilde\gamma) = \int_{0}^{\tau(\gamma)\wedge \tau(\widetilde \gamma)} h(t, \gamma(t), \widetilde{\gamma}(t), \dot{\gamma}(t), \dot{\widetilde{\gamma}}(t)) \diff t.
\]
Since $Q(\dom L) = 1$, we can integrate the above equality in $\widetilde\gamma$ with respect to $Q$ to deduce that
\[
\int_{\Gamma} H(\gamma, \widetilde\gamma) \diff Q(\widetilde\gamma) = \int_{\Gamma} \int_{0}^{\tau(\gamma)\wedge \tau(\widetilde \gamma)} h(t, \gamma(t), \widetilde{\gamma}(t), \dot{\gamma}(t), \dot{\widetilde{\gamma}}(t)) \diff t \diff Q(\widetilde\gamma)
\]
for every $\gamma \in \dom L$. Thus, $F_1(\gamma, Q) = F_2(\gamma, Q)$ for every $\gamma \in \dom L$. On the other hand, if $\gamma \notin \dom L$, we have $L(\gamma) = +\infty$, hence $F_1(\gamma, Q) = F_2(\gamma, Q) = +\infty$. Hence, $F_1(\gamma, Q) = F_2(\gamma, Q)$ for every $\gamma \in \Gamma$.

If $Q$ is an equilibrium of the mean field game from Section~\ref{Cucker-Smale:The Model} with initial condition $m_0 \in \mathcal P(\overline\Omega)$, then $\int_{\Gamma} F_1 (\gamma, Q) \diff Q(\gamma) < +\infty$, showing that $F_1(\gamma, Q) < +\infty$ for $Q$-almost every $\gamma \in \Gamma$. In particular, $L(\gamma) < +\infty$ for $Q$-almost every $\gamma$, i.e., $Q(\dom L) = 1$, and thus, by the above argument, $F_1(\gamma, Q) = F_2(\gamma, Q)$ for every $\gamma \in \Gamma$, proving that $Q$ is also an equilibrium of $\NAGall$. Conversely, if $Q$ is an equilibrium of $\NAGall$, then $\int_{\Gamma} F_2 (\gamma, Q) \diff Q(\gamma) < +\infty$, showing that $F_2(\gamma, Q) < +\infty$ for $Q$-almost every $\gamma \in \Gamma$. In particular, $L(\gamma) < +\infty$ for $Q$-almost every $\gamma$, i.e., $Q(\dom L) = 1$, and thus, by the above argument, $F_1(\gamma, Q) = F_2(\gamma, Q)$ for every $\gamma \in \Gamma$, proving that $Q$ is also an equilibrium of the mean field game from Section~\ref{Cucker-Smale:The Model} with initial condition $m_0 \in \mathcal P(\overline\Omega)$.
\end{proof}

As an immediate consequence of Theorem~\ref{thm:exist-abstract} and Proposition~\ref{prop:same-equilibria}, we obtain our main result on the mean field game model from Section~\ref{Cucker-Smale:The Model}.

\begin{theorem}
\label{thm:3:exist}
Assume that \ref{Hypo3-Omega}--\ref{individualCost} are satisfied and let $m_0 \in \mathcal P(\overline\Omega)$. Then there exists an equilibrium $Q \in \mathcal P(\Gamma)$ of the mean field game from Section~\ref{Cucker-Smale:The Model} with initial condition $m_0$.
\end{theorem}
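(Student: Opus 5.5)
The plan is to deduce the statement directly from the abstract theory of Section~\ref{sec:abstract}, using Proposition~\ref{prop:same-equilibria} as the bridge. We set $Y = \overline\Omega$, $X = \Gamma$, $\pi = e_0$, and define $L$ and $H$ as in Remark~\ref{remk:concrete}. Under \ref{Hypo3-Omega}--\ref{individualCost}, Proposition~\ref{prop:same-equilibria} guarantees that all of \ref{Hypo-XY-Polish}--\ref{Hypo-H-leq-L} hold for $\NAGall$.

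For the record, these hypotheses come from the following sources. Polishness of $\Gamma$ and $\overline\Omega$, and continuity of $e_0$ for uniform convergence on compacts, are immediate. Lower semicontinuity of $L$ is Corollary~\ref{coro:L-lsc}. Borel measurability of $H$ and lower semicontinuity of $J$ are Lemma~\ref{lemm:J-lsc}. Symmetry of $H$ follows from \ref{h convex and symmetric}\ref{item:H-symmetric}, since the truncation time $\tau(\gamma)\wedge\tau(\widetilde\gamma)$ and the condition that both $L$-values be finite are symmetric. Hypothesis \ref{Hypo-domain} is \ref{individualCost}, \ref{Hypo-compact} is Corollary~\ref{coro:compactness}, and \ref{Hypo-H-leq-L} is Lemma~\ref{lemm:proof-of-H-leq-L}.

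Given this, Theorem~\ref{thm:exist-abstract} applies with $m_0 \in \mathcal P(\overline\Omega) = \mathcal P(Y)$. Since \ref{Hypo-H-symmetric} holds, it yields a minimizer $Q$ of $\mathcal J$ in $\mathcal P_{m_0}(\Gamma)$, and this minimizer is an equilibrium of $\NAGall$ by Theorem~\ref{thm:potential}. Finally, the second part of Proposition~\ref{prop:same-equilibria} says that $Q$ is also an equilibrium in the sense of Definition~\ref{def:equilibrium-3}, i.e., ${e_0}_{\#}Q = m_0$, the total cost $\int_\Gamma F\,\diff Q$ is finite, and $Q$-almost every $\gamma$ is optimal among curves with the same initial point.

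There is essentially no obstacle left at this stage, since all the difficulties (the lack of continuity of $\tau$, the noncompact time horizon, and the lower semicontinuity of $J$) were handled in the preceding lemmas. The only point deserving care is the discrepancy between the cost $F$ of \eqref{eq:cost} and the abstract cost of \eqref{eq:abstract-cost} off $\dom L$. That discrepancy is exactly what Proposition~\ref{prop:same-equilibria} resolves, through the observation that any equilibrium satisfies $Q(\dom L) = 1$.
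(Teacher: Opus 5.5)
Your proposal is correct and follows the paper's proof exactly: Proposition~\ref{prop:same-equilibria} provides \ref{Hypo-XY-Polish}--\ref{Hypo-H-leq-L} and identifies the two notions of equilibrium, and Theorem~\ref{thm:exist-abstract} (through Theorem~\ref{thm:potential}) supplies an equilibrium of $\NAGall$. The paper states the result as an immediate consequence of these two results, and your hypothesis-by-hypothesis bookkeeping matches the lemmas it cites.
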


We conclude our discussion on the model from Section~\ref{Cucker-Smale:The Model} by addressing strong equilibria. Similarly to Definition~\ref{def:strong-equilibrium-potential}, we shall say that $Q \in \mathcal P(\Gamma)$ is a strong equilibrium of the mean field game from Section~\ref{Cucker-Smale:The Model} if it satisfies Definition~\ref{def:equilibrium-3} with ``$Q$-almost every $\gamma$'' replaced by ``every $\gamma \in \supp(Q)$''.

Let us introduce two additional assumptions on the model from Section~\ref{Cucker-Smale:The Model}.

\begin{hypothesisMFG}[resume]
\item\label{Hypo-geodesic} For every $x \in \overline\Omega$ and every sequence $(x_n)_{n \in \mathbb N}$ with $x_n \to x$ as $n \to +\infty$, we have that $\dist_{\mathrm{geo}}(x_n, x) \to 0$ as $n \to +\infty$, where $\dist_{\mathrm{geo}}$ denotes the geodesic distance in $\overline\Omega$.

\item\label{Hypo-ultima} There exists $\alpha^\ast > 0$ such that $\ell(t, x, p) \leq \alpha^\ast \abs{p}^\theta$ for every $(t, x, p) \in \mathbb R_+ \times \overline\Omega \times \mathbb R^d$, where $\theta$ is the same constant as in \ref{l is superlinear}.
\end{hypothesisMFG}

One of the consequences of \ref{Hypo-ultima} is that trajectories minimizing $F(\gamma, Q)$ starting at some $x_0$ already in the target set $\Xi$ must necessarily remain constant at all times.

\begin{lemma}
\label{lemm:optimal-constant}
Assume that \ref{Hypo3-Omega}--\ref{Psi1} and \ref{Hypo-ultima} are satisfied for the mean field game of Section~\ref{Cucker-Smale:The Model} and let $x_0 \in \Xi$ and $Q \in \mathcal P(\Gamma)$. Then the unique trajectory $\gamma_0$ minimizing \eqref{eq:cost} with the constraint $\gamma_0(0) = x_0$ is the constant trajectory $\gamma_0(t) = x_0$ for every $t \in \mathbb R_+$.
\end{lemma}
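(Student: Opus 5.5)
The plan is to compute the cost of the constant trajectory, show it is as small as possible, and then show that any other admissible trajectory starting at $x_0$ pays strictly more.

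\emph{Cost of the constant trajectory.} Let $\gamma_0(t) = x_0$ for all $t \in \mathbb R_+$. Since $x_0 \in \Xi$, definition \eqref{eq:defi3-tau} gives $\tau(\gamma_0) = 0$. Hence $\tau(\gamma_0) \wedge \tau(\widetilde\gamma) = 0$ for every $\widetilde\gamma$, and the interaction term in \eqref{eq:cost} vanishes. By \ref{Hypo-ultima} and $\dot\gamma_0 \equiv 0$, we have $0 \le \ell(t, x_0, 0) \le \alpha^\ast \cdot 0 = 0$, so the running cost also vanishes. Therefore $F(\gamma_0, Q) = \Psi(0)$.

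Two conventions need checking. If $Q$ charges non-absolutely-continuous curves, then by the convention after \eqref{eq:cost} every $F(\cdot, Q)$ equals $+\infty$, and uniqueness fails. I would therefore treat the statement as implicitly assuming $Q$ concentrated on absolutely continuous curves, or argue with the $F_2$ version from Remark~\ref{remk:concrete}, and say so explicitly. Integrability in $\widetilde\gamma$ is trivial here, since the integrand is $0$.

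\emph{Lower bound for any admissible trajectory.} Let $\omega \in \Gamma$ with $\omega(0) = x_0$. If $\omega$ is not absolutely continuous, then $F(\omega, Q) = +\infty$. Otherwise, since $\ell \ge 0$, $h \ge 0$ and $\Psi$ is nondecreasing,
\[
F(\omega, Q) \geq \int_0^{+\infty} \ell(t, \omega(t), \dot\omega(t)) \diff t + \Psi(0) \geq \alpha \int_0^{+\infty} \abs{\dot\omega(t)}^\theta \diff t + \Psi(0),
\]
where the second inequality uses \ref{l is superlinear}\ref{item:lower-bound-ell}. So $\gamma_0$ is a minimizer.

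\emph{Uniqueness.} If $F(\omega, Q) = F(\gamma_0, Q) = \Psi(0) < +\infty$, the display forces $\int_0^{+\infty} \abs{\dot\omega}^\theta \diff t = 0$. Thus $\dot\omega = 0$ almost everywhere, and absolute continuity gives $\omega \equiv x_0$.

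The main point to handle carefully is the convention issue for $Q$ noted above. Beyond that, the argument needs only nonnegativity of $h$ and monotonicity of $\Psi$, not the symmetry or growth bounds in \ref{h convex and symmetric}.
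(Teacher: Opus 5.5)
Your argument is correct and follows essentially the same route as the paper's proof. The paper shows $F(\cdot,Q)\ge\Psi(0)$ using $\ell,h\ge 0$ and the monotonicity of $\Psi$, obtains equality for the constant curve via \ref{Hypo-ultima}, and gets uniqueness from $\ell\ge\alpha\abs{p}^\theta$, exactly as you do.

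Your caveat about the degenerate case is well taken, and the paper silently ignores it. If $Q$ charges non-absolutely-continuous curves, then $F(\cdot,Q)\equiv+\infty$ and uniqueness fails. This is harmless in the paper, since the lemma is only used for $Q\in\dom\mathcal L$. Note, however, that switching to the abstract cost of Remark~\ref{remk:concrete} would not remove the issue: that cost is also identically $+\infty$ whenever $Q(\{L=+\infty\})>0$. So the genuine fix is your first option, an extra assumption on $Q$.
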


\begin{proof}
Clearly, $F(\gamma, Q) \geq \Psi(0)$ for every $\gamma \in \Gamma$, and the value $\Psi(0)$ is attained for the constant trajectory $\gamma_0$ since $\tau(\gamma_0) = 0$ and $0 \leq \ell(t, \gamma_0(t), \dot\gamma_0(t)) \leq \alpha^\ast \abs{\dot\gamma_0(t)}^\theta = 0$. To prove uniqueness, note that $F(\gamma, Q) = \Psi(0)$ implies $\int_0^{+\infty} \ell(t, \gamma(t), \dot\gamma(t)) \diff t = 0$, showing that $\gamma$ is absolutely continuous and $\ell(t, \gamma(t), \dot\gamma(t)) = 0$ for almost every $t \in \mathbb R_+$. By \ref{l is superlinear}\ref{item:lower-bound-ell}, this implies that $\dot\gamma(t) = 0$ for almost every $t \in \mathbb R_+$, and hence $\gamma$ is constant. The constraint $\gamma(0) = x_0$ then implies that $\gamma = \gamma_0$.
\end{proof}

We now use \ref{Hypo-geodesic} and \ref{Hypo-ultima} to prove that \ref{Hypo-technical} is satisfied.

\begin{lemma}
\label{lemm:OOpt-closed}
Assume that \ref{Hypo3-Omega}--\ref{individualCost}, \ref{Hypo-geodesic}, and \ref{Hypo-ultima} are satisfied for the mean field game of Section~\ref{Cucker-Smale:The Model} and consider the non-atomic game $\NAGall$ defined as in Remark~\ref{remk:concrete}. Then \ref{Hypo-technical} is satisfied.
\end{lemma}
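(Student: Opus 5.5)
Fix $Q \in \dom\mathcal L$. To show that $\OOpt(Q)$ is closed, take a sequence $(\gamma_n)$ in $\OOpt(Q)$ converging in $\Gamma$ to some $\gamma$; we must show $\gamma \in \OOpt(Q)$. Write $x_n = \gamma_n(0) \to x = \gamma(0)$ and $\nu(y) = \inf_{\pi(z) = y} F(z, Q)$. The plan rests on two inequalities.

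\emph{Step 1: a uniform bound and lower semicontinuity.} First, $F(\gamma_n, Q) = \nu(x_n) \leq (C+1)\kappa + C\mathcal L(Q) + C$, as in the argument leading to \eqref{eq:OOpt-compact}. Hence $L(\gamma_n)$ is bounded. Lemma~\ref{lemm:F-lsc}, which applies via Lemma~\ref{lemm:J-lsc}, then gives $F(\gamma, Q) \leq \liminf_n \nu(x_n)$.

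\emph{Step 2: upper semicontinuity of $\nu$ along $x_n \to x$.} It remains to show $\limsup_n \nu(x_n) \leq \nu(x)$. Fix $\varepsilon > 0$ and pick $\omega$ with $\omega(0) = x$ and $F(\omega, Q) \leq \nu(x) + \varepsilon$; this is finite by Corollary~\ref{coro:exist-optimal-FxQ}. We build competitors $\omega_n$ starting at $x_n$ as follows.
\begin{itemize}
\item By \ref{Hypo-geodesic}, join $x_n$ to $x$ by a curve in $\overline\Omega$ of length $\delta_n = \dist_{\mathrm{geo}}(x_n, x) + 1/n \to 0$.
\item Traverse this curve at constant speed over a time $s_n$, chosen with $s_n \to 0$ and $\delta_n^\theta / s_n^{\theta-1} \to 0$, for instance $s_n = \delta_n^{1/2}$.
\item Then follow $\omega$ shifted in time: $\omega_n(t) = \omega(t - s_n)$ for $t \geq s_n$.
\end{itemize}
By \ref{Hypo-ultima}, the cost of the connecting piece is at most $\alpha^\ast \delta_n^\theta s_n^{1-\theta} \to 0$. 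The interaction over the connecting piece is controlled by $\int_0^{s_n} C(\abs{\dot\omega_n}^\beta + \abs{\dot{\widetilde\gamma}}^\beta)\,dt$, integrated against $Q$. The first part tends to zero by Hölder's inequality. The second part tends to zero by uniform integrability: $\int\int_0^{s_n} \abs{\dot{\widetilde\gamma}}^\beta \diff t \diff Q$ goes to $0$ by dominated convergence, since $\int_0^{\tau}\abs{\dot{\widetilde\gamma}}^\beta$ is $Q$-integrable by the proof of Lemma~\ref{lemm:proof-of-H-leq-L}.

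\emph{Main obstacle: the time shift.} The shifted part is where I expect the real difficulty. The functions $\ell$, $h$ and $\Psi$ depend explicitly on $t$, and the interaction pairs $\omega(t - s_n)$ with $\widetilde\gamma(t)$, so we need
\[
\limsup_n F(\omega_n, Q) \leq F(\omega, Q).
\]
The exit time satisfies $\tau(\omega_n) \leq \tau(\omega) + s_n$ (or $\tau(x_n) = 0$ if $x_n \in \Xi$). However, $\Psi$ is only lower semicontinuous and nondecreasing, so $\Psi(\tau(\omega) + s_n)$ need not converge to $\Psi(\tau(\omega))$.

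My first attempt avoids any shift in $\Psi$ by giving the competitor the \emph{same} exit time. Instead of a pure shift, reparametrize time: map $[0, \tau(\omega)]$ onto $[s_n, \tau(\omega)]$ affinely, so the competitor exits exactly at $\tau(\omega)$. The speed then scales by a factor $\to 1$, and $\ell$-costs converge by continuity and a dominated convergence argument using \ref{Hypo-ultima} and the $L^\theta$ bound on $\dot\omega$. The time-dependence of $\ell$ and $h$ remains problematic, since they are only measurable in $t$. I would handle this by approximating in $L^1$: continuity of translations and dilations in $L^1$ applies to $t \mapsto \ell(t, \omega(\cdot), \dot\omega(\cdot))$ only after a Lusin/Scorza-Dragoni step. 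This is the step I would work hardest on, possibly replacing exact convergence by the weaker claim $\limsup_n \leq F(\omega, Q) + \varepsilon$ obtained through a density argument.

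If $\tau(\omega) = 0$, i.e.\ $x \in \Xi$, Lemma~\ref{lemm:optimal-constant} gives $\omega$ constant, and a direct bound suffices. Combining Steps 1 and 2, $F(\gamma, Q) \leq \nu(x)$, so $\gamma \in \OOpt(Q)$.
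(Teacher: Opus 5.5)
Your Step~2 asserts $\limsup_n \nu(x_n) \leq \nu(x)$ in every case. This inequality is false when $x \in \Xi$, $x_n \notin \Xi$ for all large $n$, and $\Psi$ jumps at $0$, which \ref{Psi1} allows: for instance, $\Psi(0) = 0$ and $\Psi(t) = 1 + t$ for $t > 0$ is lower semicontinuous and nondecreasing. Indeed, Lemma~\ref{lemm:optimal-constant} gives $\nu(x) = \Psi(0)$. On the other hand, every $\omega$ with $\omega(0) = x_n \notin \Xi$ has $\tau(\omega) > 0$ because $\Xi$ is closed. Hence $F(\omega, Q) \geq \Psi(\tau(\omega)) \geq \Psi(0^+)$, so $\nu(x_n) \geq \Psi(0^+) > \Psi(0)$. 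Your ``direct bound'' (connect to $x$, then rest) only gives $\limsup_n \nu(x_n) \leq \Psi(0^+)$, hence $\nu(x_n) \to \Psi(0^+)$. Step~1 then only gives $F(\gamma, Q) \leq \Psi(0^+)$, i.e.\ $\int_0^{+\infty} \ell(t, \gamma(t), \dot\gamma(t)) \diff t \leq \Psi(0^+) - \Psi(0)$, which does not make $\gamma$ optimal. The missing idea (Case~2 of the paper's proof) is to stop routing the argument through $\limsup_n \nu(x_n)$ and prove instead that $\gamma$ is constant. Since $F(\gamma_n, Q) \geq \int_0^{+\infty} \ell(t, \gamma_n(t), \dot\gamma_n(t)) \diff t + \Psi(0^+)$ and $F(\gamma_n, Q) = \nu(x_n) \to \Psi(0^+)$, the running costs of the $\gamma_n$ tend to $0$. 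By \ref{l is superlinear}\ref{item:lower-bound-ell}, $\norm{\dot\gamma_n}_{L^\theta} \to 0$. Weak lower semicontinuity of the norm then makes $\gamma$ constant, and Lemma~\ref{lemm:optimal-constant} gives $\gamma \in \OOpt(Q)$ directly. (If infinitely many $x_n$ lie in $\Xi$, your argument does work along that subsequence, since there $\nu(x_n) = \Psi(0) = \nu(x)$.)

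For $x \notin \Xi$, the step you identify as the main obstacle is left open, so the proposal is incomplete there as well. The paper avoids any time shift. With $\varepsilon_n = \dist_{\mathrm{geo}}(x_n, x)$, it joins $x_n$ on $[0, \varepsilon_n]$ by a constant-speed geodesic to the point $\widetilde\gamma(\varepsilon_n)$ (not to $x$). It then follows the competitor $\widetilde\gamma$ itself, unshifted, for $t \geq \varepsilon_n$. This construction buys three things:
\begin{itemize}
\item For large $n$, neither piece meets $\Xi$ on $[0, \varepsilon_n]$, since $x \notin \Xi$ and $\Xi$ is closed. Hence $\tau(\widetilde\gamma_n) = \tau(\widetilde\gamma)$ exactly, and the issue with $\Psi$ disappears.
\item The integrands of $\ell$ and $h$ agree after time $\varepsilon_n$, so the merely measurable $t$-dependence never matters.
\item The connecting cost is at most $\alpha^\ast \dist_{\mathrm{geo}}(x_n, \widetilde\gamma(\varepsilon_n))^\theta / \varepsilon_n^{\theta - 1} \leq \alpha^\ast 2^{\theta - 1} \bigl(\varepsilon_n + \int_0^{\varepsilon_n} \abs{\dot{\widetilde\gamma}}^\theta \diff t\bigr) \to 0$. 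An analogous bound holds for $h$, followed by dominated convergence in $Q$.
\end{itemize}
Your affine reparametrization could also be completed, and without Scorza--Dragoni. Writing the competitor as $\omega(\phi_n(t))$, the explicit time argument in $\ell(t, \omega(\phi_n(t)), \dot\omega(\phi_n(t)) \phi_n'(t))$ is untouched. Continuity of $\ell(t, \cdot, \cdot)$, $L^\theta$-continuity of dilations, and the domination from \ref{Hypo-ultima} therefore suffice. The unshifted construction is nonetheless considerably lighter.
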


\begin{proof}
Let $Q \in \dom \mathcal L$ and $\OOpt(Q)$ be the set defined in \eqref{eq:def-OOpt}. Recall that we have the inclusion
\[\OOpt(Q_0) \subset \{\gamma \in \Gamma \suchthat L(\gamma) \leq (C+1)\kappa + C \mathcal L(Q_0) + C\},\]
where $\kappa$ and $C$ are the constants from \ref{Hypo-domain} and \ref{Hypo-H-leq-L}: this was proved in Remark~\ref{remk:new-proof} under \ref{Hypo-domain} and \ref{Hypo-H-leq-L}, and these two assumptions are satisfied here thanks to \ref{individualCost} and Lemma~\ref{lemm:proof-of-H-leq-L}. Let $(\gamma_n)_{n \in \mathbb N}$ be a sequence in $\OOpt(Q)$ and $\gamma \in \Gamma$ with $\gamma_n \to \gamma$ as $n \to +\infty$. In particular, by Lemma~\ref{lemm:compactness}, we deduce that $\gamma$ and $\gamma_n$ belong to $W^{1, \theta}_{\loc}(\mathbb R_+, \overline\Omega)$ for every $n \in \mathbb N$ and that $\dot{\gamma}_n \rightharpoonup \dot\gamma$ in $L^\theta([0, T], \mathbb R^d)$ as $n \to +\infty$, for every $T > 0$.

Let $x_0 = \gamma(0)$ and, for $n \in \mathbb N$, set $x_n = \gamma_n(0)$. We split the proof in three cases.

\case{1}{$x_0 \in \Xi$ and there exists a subsequence $(x_{n_k})_{k \in \mathbb N}$ of $(x_n)_{n \in \mathbb N}$ such that $x_{n_k} \in \Xi$ for every $k \in \mathbb N$}

In this case, it follows from Lemma~\ref{lemm:optimal-constant} that $\gamma_{n_k}$ is constant and, since $\gamma_{n_k} \to \gamma$ as $k \to +\infty$, it follows that $\gamma$ is also constant. Hence, applying once again Lemma~\ref{lemm:optimal-constant}, we deduce that $\gamma$ minimizes \eqref{eq:cost} with the constraint $\gamma(0) = x_0$, yielding that $\gamma \in \OOpt(Q)$.

\case{2}{$x_0 \in \Xi$ and $x_{n} \notin \Xi$ for every $n \in \mathbb N$}

Let $\varepsilon_n = \dist_{\mathrm{geo}}(x_n, x_0)$ and $\sigma_{n}\colon[0, \varepsilon_n]\to \overline\Omega$ be a geodesic curve such that $\sigma_{n}(0) = x_{n}$, $\sigma_{n}(\varepsilon_n) = x_0$, and $\abs*{\dot \sigma_{n}(t)} = 1$ for almost every $t\in (0,\varepsilon_n)$. Notice that the existence of such a geodesic curve can be ensured by assumption~\ref{Hypo-geodesic} and \cite[Proposition~2.5.19]{Burago2001Course} and, in addition, by \ref{Hypo-geodesic}, we have $\varepsilon_n \to 0$ as $n\to \infty$. One can easily extend the domain of $\sigma_n$ to $\mathbb R_+$ by setting $\sigma_n(t) = x_0$ for all $t\ge \varepsilon_n$. We have $0 < \tau(\sigma_n) \leq \varepsilon_n \to 0$ as $n \to +\infty$, and thus $\Psi(\tau(\sigma_n)) \to \Psi(0^+)$ as $n \to +\infty$, where $\Psi(0^+) = \lim_{t \to 0^+} \Psi(t)$, which exists since $\Psi$ is nondecreasing. We also compute
\begin{equation}
\label{eq:convergence-ell}
\int_0^{+\infty} \ell(t, \sigma_n(t), \dot\sigma_n(t)) \diff t \leq \alpha^\ast \int_0^{+\infty} \abs{\dot\sigma_n(t)}^\theta \diff t \leq \alpha^\ast \varepsilon_n \xrightarrow[n \to +\infty]{} 0
\end{equation}
and, for every $\omega \in \dom L$,
\begin{equation}
\label{eq:pointwise-convergence}
\int_{0}^{\tau(\sigma_n) \wedge \tau(\omega)} h(t, \sigma_n(t), \omega(t), \dot\sigma_n(t), \dot{\omega}(t)) \diff t \leq C \varepsilon_n + C \int_0^{\varepsilon_n} \abs{\dot{\omega}(t)}^\beta \diff t \xrightarrow[n \to +\infty]{} 0,
\end{equation}
where we use the fact that $\omega \in \dom L$ to deduce from \ref{l is superlinear}\ref{item:lower-bound-ell} that $\dot{\omega} \in L^\theta(\mathbb R_+, \mathbb R^d)$, and thus $\abs{\dot{\omega}}^\beta \in L^1([0, T], \mathbb R^d)$ for every $T > 0$ since $\beta \in (0, \theta]$. In addition, for all $n$ large enough so that $\varepsilon_n \leq 1$, we have
\begin{equation}
\label{eq:domination}
\int_{0}^{\tau(\sigma_n) \wedge \tau(\omega)} h(t, \sigma_n(t), \omega(t), \dot\sigma_n(t), \dot{\omega}(t)) \diff t \leq C + C \int_0^1 \abs{\dot{\omega}(t)}^\beta \diff t,
\end{equation}
and the right-hand side of the above inequality is $Q$-integrable, since, by Hölder's inequality and \ref{l is superlinear}\ref{item:lower-bound-ell}, we have
\begin{equation}
\label{eq:domination-2}
\begin{aligned}
\int_{\Gamma} \int_0^1 \abs{\dot{\omega}(t)}^\beta \diff t \diff Q(\omega) & \leq \left(\int_{\Gamma} \int_0^1 \abs{\dot{\omega}(t)}^\theta \diff t \diff Q(\omega)\right)^{\frac{\beta}{\theta}} \\
& \leq \left(\frac{1}{\alpha} \int_{\Gamma} \int_0^{+\infty} \ell(t, \omega(t), \dot{\omega}(t)) \diff t \diff Q(\omega)\right)^{\frac{\beta}{\theta}} \leq \left(\frac{\mathcal L(Q)}{\alpha}\right)^{\frac{\beta}{\theta}} < +\infty.
\end{aligned}
\end{equation}
Hence, it follows from \eqref{eq:pointwise-convergence}, \eqref{eq:domination}, and Lebesgue's dominated convergence theorem that
\[
\lim_{n \to +\infty} \int_{\Gamma}\int_{0}^{\tau(\sigma_n) \wedge \tau(\omega)} h(t, \sigma_n(t), \omega(t), \dot\sigma_n(t), \dot{\omega}(t)) \diff t \diff Q(\omega) = 0.
\]
Combining this with \eqref{eq:convergence-ell} and the fact that $\Psi(\tau(\sigma_n)) \to \Psi(0^+)$ as $n \to +\infty$, we deduce that $\lim_{n \to +\infty} F(\sigma_{n}, Q)  = \Psi(0^+)$. Since $\gamma_n$ is a minimizer of \eqref{eq:cost} with fixed initial condition $x_n$ and $\sigma_n \in \Gamma$ with $\sigma_n(0) = x_n$, we have $F(\gamma_{n}, Q) \leq F(\sigma_{n},Q)$, and, on the other hand, since $\tau(\gamma_n) > 0$, we have $\Psi(0^+) \leq \Psi(\tau(\gamma_n)) \leq F(\gamma_n, Q)$. Hence, $\lim_{n\to +\infty}F(\gamma_{n}, Q) = \Psi(0^+)$, and \ref{l is superlinear}\ref{item:lower-bound-ell} yields $\int_{0}^{+\infty} \abs{\dot\gamma_{n}(t)}^\theta \diff t \to 0$ as $n \to +\infty$. Recalling that, for every $T > 0$, we have $\dot{\gamma}_n \rightharpoonup \dot\gamma$ in $L^\theta([0, T], \mathbb R^d)$ as $n \to +\infty$, we deduce, for the lower semicontinuity of the $L^\theta$ norm with respect to weak convergence, that
\[
\norm{\dot\gamma}_{L^\theta([0, T], \mathbb R^d)} \leq \liminf_{n \to +\infty} \norm{\dot\gamma_n}_{L^\theta([0, T], \mathbb R^d)} = 0,
\]
and, since $T > 0$ is arbitrary, we deduce that $\gamma$ is constant. Hence, by Lemma~\ref{lemm:optimal-constant}, we deduce that $\gamma$ minimizes \eqref{eq:cost} with the constraint $\gamma(0) = x_0$, yielding that $\gamma \in \OOpt(Q)$.

\case{3}{$x_0 \notin \Xi$}

By \ref{Hypo3-Gamma}, we have in this case $x_n \notin \Xi$ for $n$ large enough. To prove that $\gamma \in \OOpt(Q)$, we will prove that $F(\gamma, Q) \leq F(\widetilde\gamma, Q)$ for every $\widetilde\gamma \in \Gamma$ with $\widetilde\gamma(0) = x_0$. For that purpose, we will construct a sequence of trajectories $(\widetilde\gamma_n)_{n \in \mathbb N}$ in $\Gamma$ with $\widetilde\gamma_n(0) = x_n$ for every $n \in \mathbb N$ and such that
\begin{equation}
\label{eq:to-prove-FgammaQ}
\liminf_{n \to +\infty} F(\widetilde\gamma_n, Q) \leq F(\widetilde\gamma, Q).
\end{equation}
This will allow us to conclude, since, by Lemma~\ref{lemm:F-lsc} and by the fact that $\gamma_n \in \OOpt(Q)$, we will then have
\[
F(\gamma, Q) \leq \liminf_{n \to +\infty} F(\gamma_n, Q) \leq \liminf_{n \to +\infty} F(\widetilde\gamma_n, Q) \leq F(\widetilde\gamma, Q),
\]
yielding that $\gamma \in \OOpt(Q)$. We then focus on showing \eqref{eq:to-prove-FgammaQ}.

Let $\widetilde\gamma \in \Gamma$ with $\widetilde\gamma(0) = x_0$. Note that there is nothing to be prove in \eqref{eq:to-prove-FgammaQ} if $F(\widetilde\gamma, Q) = +\infty$, so we assume in the sequel that $F(\widetilde\gamma, Q) < +\infty$. In particular, $\widetilde\gamma \in \dom L$, which implies by \ref{l is superlinear} and \ref{Psi1} that $\dot{\widetilde\gamma} \in L^\theta(\mathbb R_+, \mathbb R^d)$ and $\tau(\widetilde\gamma) < +\infty$.

Let $\varepsilon_n = \dist_{\mathrm{geo}}(x_n, x_0)$ and $\varsigma_n\colon [0, \varepsilon_n]\to \overline \Omega$ be a geodesic curve such that $\varsigma_n(0) = x_n$, $\varsigma_n(\varepsilon_n) = \widetilde\gamma(\varepsilon_n)$, and $\abs*{\dot\varsigma_n(t)} = \frac{\dist_{\mathrm{geo}}(x_n, \widetilde\gamma(\varepsilon_n))}{\varepsilon_n}$ for every $t\in [0, \varepsilon_n]$ (with the convention that, in the case $\varepsilon_n = 0$, we have that $\varsigma_n\colon \{0\} \to \overline\Omega$ is given by $\varsigma_n(0) = x_n = x_0$). Notice that, as in Case~2, the existence of such a geodesic curve can be ensured by assumption~\ref{Hypo-geodesic} and \cite[Proposition~2.5.19]{Burago2001Course} and, in addition, by \ref{Hypo-geodesic}, we have $\varepsilon_n \to 0$ as $n \to +\infty$. Let us define $\widetilde\gamma_n \in \Gamma$ by
\begin{equation*}
\widetilde\gamma_n(t) = \begin{dcases*}
\varsigma_n (t) & if $t \in [0, \varepsilon_n]$, \\
\widetilde\gamma(t) & if $t \geq \varepsilon_n$.
\end{dcases*}
\end{equation*}
In particular, we also have $\widetilde\gamma_n \in \dom L$, $\dot{\widetilde\gamma}_n \in L^\theta(\mathbb R_+, \mathbb R^d)$, and $0 < \tau(\widetilde\gamma_n) \leq \tau(\widetilde\gamma) < +\infty$ for every $n \in \mathbb N$ large enough.

Note that, since $\widetilde\gamma$ and $\widetilde\gamma_n$ belong to $\dom L$, we have
\begin{equation}
\label{eq:estim-ell}
\begin{aligned}
\MoveEqLeft \abs*{\int_0^{+\infty} \ell(t, \widetilde\gamma(t), \dot{\widetilde\gamma}(t)) \diff t - \int_0^{+\infty} \ell(t, \widetilde\gamma_n(t), \dot{\widetilde\gamma}_n(t)) \diff t} \\
& \leq \int_0^{\varepsilon_n} \ell(t, \widetilde\gamma(t), \dot{\widetilde\gamma}(t)) \diff t + \int_0^{\varepsilon_n} \ell(t, \varsigma_n(t), \dot\varsigma_n(t)) \diff t \\
& \leq \int_0^{\varepsilon_n} \ell(t, \widetilde\gamma(t), \dot{\widetilde\gamma}(t)) \diff t + \alpha^\ast \frac{\dist_{\mathrm{geo}}(x_n, \widetilde\gamma(\varepsilon_n))^\theta}{\varepsilon_n^{\theta-1}}.
\end{aligned}
\end{equation}
Since $\widetilde\gamma \in \dom L$, we have
\begin{equation}
\label{eq:estim-ell-1}
\int_0^{\varepsilon_n} \ell(t, \widetilde\gamma(t), \dot{\widetilde\gamma}(t)) \diff t \xrightarrow[n \to +\infty]{} 0.
\end{equation}
Notice also that
\[
\dist_{\mathrm{geo}}(x_n, \widetilde \gamma(\varepsilon_n))^{\theta} \leq \left(\dist_{\mathrm{geo}}(x_n, x_0) + \dist_{\mathrm{geo}}(\widetilde\gamma(0), \widetilde\gamma(\varepsilon_n))\right)^{\theta} \leq 2^{\theta-1} \left(\text{$\dist_{\mathrm{geo}}(x_n, x_0)$}^{\theta} + \left(\int_{0}^{\varepsilon_n} \abs*{\dot{\widetilde \gamma}(t)} \diff t\right)^{\theta}\right)
\]
and, by Hölder's inequality,
\[
\left(\int_{0}^{\varepsilon_n} \abs*{\dot{\widetilde \gamma}(t)} \diff t\right)^{\theta} \leq \varepsilon_n^{\theta - 1} \int_{0}^{\varepsilon_n} \abs*{\dot{\widetilde \gamma}(t)}^{\theta} \diff t.
\]
Hence 
\begin{equation}
\label{Prop-FracGeo}
\frac{\text{$\dist_{\mathrm{geo}}(x_n, \widetilde \gamma(\varepsilon_n))^{\theta}$}}{\varepsilon_n^{\theta -1}} \le 2^{\theta-1} \left( \varepsilon_n + \int_{0}^{\varepsilon_n} \abs*{\dot{\widetilde \gamma}(t)}^{\theta} \diff t \right) \xrightarrow[n \to +\infty]{} 0,
\end{equation}
since $\dot {\widetilde \gamma} \in L^{\theta}(\mathbb R_+, \overline\Omega)$. Combining \eqref{eq:estim-ell}, \eqref{eq:estim-ell-1}, and \eqref{Prop-FracGeo}, we deduce that
\begin{equation}
\label{eq:limit-ell}
\lim_{n \to +\infty} \int_0^{+\infty} \ell(t, \widetilde\gamma_n(t), \dot{\widetilde\gamma}_n(t)) \diff t = \int_0^{+\infty} \ell(t, \widetilde\gamma(t), \dot{\widetilde\gamma}(t)) \diff t.
\end{equation}

Note now that, for every $t \in [0, \varepsilon_n]$, we have
\[
\begin{aligned}
\abs*{\varsigma_n(t) - x_0} & = \abs*{\varsigma_n(0) + \int_{0}^{t} \dot \varsigma_n(s)\diff s -x_0} \le \abs*{x_n - x_0} + t\frac{\dist_{\mathrm{geo}}(x_n,\widetilde \gamma(\varepsilon_n))}{\varepsilon_n}   \\
&\le 
\abs*{x_n - x_0} + \dist_{\mathrm{geo}}(x_n,\widetilde \gamma(\varepsilon_n)) \xrightarrow[n \to +\infty]{} 0,
\end{aligned}
\]
where we use \eqref{Prop-FracGeo} in the last step. Hence, since $x_0 \notin \Xi$, we deduce that, for $n$ large enough (independently of $t$), $\varsigma_n(t) \notin \Xi$ for every $t \in [0, \varepsilon_n]$. In addition, for $n$ large enough (independently of $t$), we clearly have $\widetilde\gamma(t) \notin \Xi$ for every $t \in [0, \varepsilon_n]$. Hence $\tau(\widetilde \gamma_n) = \tau(\widetilde \gamma)$ for $n$ large enough, and thus $\Psi(\tau(\widetilde \gamma_n)) = \Psi(\tau(\widetilde \gamma))$ for $n$ large enough. Combining this with \eqref{eq:limit-ell}, we deduce that
\begin{equation}
\label{eq:limit-L}
\lim_{n \to +\infty} L(\widetilde\gamma_n) = L(\widetilde\gamma).
\end{equation}

Let us now study whether $\int_{\Gamma} H(\widetilde\gamma_n, \omega) \diff Q(\omega)$ converges to $\int_{\Gamma} H(\widetilde\gamma, \omega) \diff Q(\omega)$ as $n \to +\infty$. For every $\omega \in \dom L$, recalling that $\tau(\widetilde\gamma_n) = \tau(\widetilde\gamma)$ for $n$ large enough, we have
\begin{equation}
\label{eq:estim-h}
\abs{H(\widetilde\gamma_n, \omega) - H(\widetilde\gamma, \omega)} \leq \int_0^{\varepsilon_n} h(t, \widetilde\gamma(t), \omega(t), \dot{\widetilde\gamma}(t), \dot\omega(t)) \diff t + \int_0^{\varepsilon_n} h(t, \varsigma_n(t), \omega(t), \dot{\varsigma}_n(t), \dot\omega(t)) \diff t.
\end{equation}
We have
\begin{equation}
\label{eq:estim-h-1}
\int_0^{\varepsilon_n} h(t, \widetilde\gamma(t), \omega(t), \dot{\widetilde\gamma}(t), \dot\omega(t)) \diff t \leq C \left(\int_0^{\varepsilon_n} \abs{\dot{\widetilde\gamma}(t)}^\beta \diff t + \int_0^{\varepsilon_n} \abs{\dot\omega(t)}^\beta \diff t\right) \xrightarrow[n \to +\infty]{} 0
\end{equation}
since $\widetilde\gamma$ and $\omega$ belong to $\dom L$ and, as such, their time derivatives belong to $L^\theta(\mathbb R_+, \mathbb R^d)$, which implies that $\abs{\dot{\widetilde\gamma}}^\beta$ and $\abs{\dot\omega}^\beta$ belong to $L^1([0, T], \mathbb R^d)$ for every $T > 0$, since $\beta \in (0, \theta]$. On the other hand,
\begin{equation}
\label{eq:estim-h-2}
\int_0^{\varepsilon_n} h(t, \varsigma_n(t), \omega(t), \dot{\varsigma}_n(t), \dot\omega(t)) \diff t \leq C \left(\frac{\dist_{\mathrm{geo}}(x_n, \widetilde\gamma(\varepsilon_n))^\beta}{\varepsilon_n^{\beta - 1}} + \int_0^{\varepsilon_n} \abs{\dot\omega(t)}^\beta \diff t\right).
\end{equation}
As before, we have $\int_0^{\varepsilon_n} \abs{\dot\omega(t)}^\beta \diff t \to 0$ as $n \to +\infty$, and, using \eqref{Prop-FracGeo}, we have
\[
\frac{\dist_{\mathrm{geo}}(x_n, \widetilde\gamma(\varepsilon_n))^\beta}{\varepsilon_n^{\beta - 1}} = \varepsilon_n^{\frac{\theta - \beta}{\theta}} \left(\frac{\dist_{\mathrm{geo}}(x_n, \widetilde\gamma(\varepsilon_n))^\theta}{\varepsilon_n^{\theta - 1}}\right)^{\frac{\beta}{\theta}} \xrightarrow[n \to +\infty]{} 0
\]
since $\beta \in (0, \theta]$. Combining the above with \eqref{eq:estim-h}, \eqref{eq:estim-h-1}, and \eqref{eq:estim-h-2}, we deduce that
\begin{equation}
\label{eq:pointwise-convergence-2}
\lim_{n \to +\infty} H(\widetilde\gamma_n, \omega) = H(\widetilde\gamma, \omega)
\end{equation}
for every $\omega \in \dom L$. Notice also that we can estimate, for $n$ large enough (independently of $\omega$) in order to have in particular $\varepsilon_n \leq 1$,
\begin{equation}
\label{eq:domination-3}
\begin{aligned}
H(\widetilde\gamma_n, \omega) & \leq H(\widetilde\gamma, \omega) + \int_0^{\varepsilon_n} h(t, \varsigma_n(t), \omega(t), \dot{\varsigma}_ n(t), \dot\omega(t)) \diff t \\
& \leq H(\widetilde\gamma, \omega) + C \frac{\text{$\dist_{\mathrm{geo}}$}(x_n, \widetilde\gamma(\varepsilon_n))^\beta}{\varepsilon_n^{\beta - 1}} + C \int_0^1 \abs{\dot\omega(t)}^\beta \diff t \\
& \leq H(\widetilde\gamma, \omega) + 1 + C \int_0^1 \abs{\dot\omega(t)}^\beta \diff t,
\end{aligned}
\end{equation}
where we have chosen $n$ large enough to have $C \frac{\text{$\dist_{\mathrm{geo}}$}(x_n, \widetilde\gamma(\varepsilon_n))^\beta}{\varepsilon_n^{\beta - 1}} \leq 1$. The right-hand side of \eqref{eq:domination-3} is $Q$-integrable with respect to $\omega$ thanks to Lemma~\ref{lemm:proof-of-H-leq-L}, the facts that $\widetilde\gamma \in \dom L$ and $Q \in \dom \mathcal L$, and \eqref{eq:domination-2}. Hence, by \eqref{eq:pointwise-convergence-2}, \eqref{eq:domination-3}, and Lebesgue's dominated convergence theorem, we deduce that
\begin{equation}
\label{eq:limit-H}
\lim_{n \to +\infty} \int_{\Gamma} H(\widetilde\gamma_n, \omega) \diff Q(\omega) = \int_{\Gamma} H(\widetilde\gamma, \omega) \diff Q(\omega).
\end{equation}

Combining \eqref{eq:limit-L} and \eqref{eq:limit-H}, we deduce that
\[
\lim_{n \to +\infty} F(\widetilde\gamma_n, Q) = F(\widetilde\gamma, Q)
\]
which clearly implies \eqref{eq:to-prove-FgammaQ}.
\end{proof}

Finally, as a consequence of Theorem~\ref{thm:strong-iff-equilibrium} and Lemma~\ref{lemm:OOpt-closed}, we immediately obtain the following result.

\begin{theorem}
\label{thm:3:equiv}
Assume that \ref{Hypo3-Omega}--\ref{individualCost}, \ref{Hypo-geodesic}, and \ref{Hypo-ultima} are satisfied and let $m_0 \in \mathcal P(\overline\Omega)$. Then $Q \in \mathcal P(\Gamma)$ is an equilibrium of the mean field game from Section~\ref{Cucker-Smale:The Model} with initial condition $m_0$ if and only if it is a strong equilibrium of the same mean field game with the same initial condition.
\end{theorem}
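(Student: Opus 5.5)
The proof is a transfer argument: I will reduce the statement to Theorem~\ref{thm:strong-iff-equilibrium} for the abstract game and move back to the mean field game with Proposition~\ref{prop:same-equilibria}. Throughout, $X$, $Y$, $\pi$, $L$, and $H$ are as in Remark~\ref{remk:concrete}, $m_0 \in \mathcal P(\overline\Omega)$ is fixed, and, as in that proposition, $F_1$ denotes the cost \eqref{eq:cost} and $F_2$ the cost \eqref{eq:abstract-cost}.

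\emph{Checking the hypotheses.} By Proposition~\ref{prop:same-equilibria}, assumptions \ref{Hypo3-Omega}--\ref{individualCost} give \ref{Hypo-XY-Polish}--\ref{Hypo-H-leq-L}. Adding \ref{Hypo-geodesic} and \ref{Hypo-ultima}, Lemma~\ref{lemm:OOpt-closed} gives \ref{Hypo-technical}. So Theorem~\ref{thm:strong-iff-equilibrium} applies: an element of $\mathcal P_{m_0}(\Gamma)$ is an equilibrium of $\NAGall$ if and only if it is a strong equilibrium of $\NAGall$.

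\emph{Forward implication.} Let $Q$ be an equilibrium of the mean field game.
\begin{enumerate}
\item By Proposition~\ref{prop:same-equilibria}, $Q$ is an equilibrium of $\NAGall$.
\item By Theorem~\ref{thm:strong-iff-equilibrium}, $Q$ is then a strong equilibrium of $\NAGall$.
\item As in the proof of Proposition~\ref{prop:same-equilibria}, $\int F_2 \diff Q < +\infty$ forces $Q(\dom L) = 1$, and therefore $F_1(\cdot, Q) = F_2(\cdot, Q)$ on all of $\Gamma$.
\item Hence every $\gamma \in \supp(Q)$ minimizes $F_1(\cdot, Q)$ among curves with the same initial point, and the integrability condition also carries over from $F_2$ to $F_1$. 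So $Q$ is a strong equilibrium of the mean field game.
\end{enumerate}

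\emph{Converse implication.} Let $Q$ be a strong equilibrium of the mean field game.
\begin{enumerate}
\item It is in particular an equilibrium, because $Q(\supp(Q)) = 1$ on the Polish space $\Gamma$.
\item The forward argument then identifies $F_1$ and $F_2$, and the definitions match as before.
\end{enumerate}

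\emph{Main obstacle.} The only delicate point is the mismatch between $F_1$ and $F_2$ off $\dom L$. The identification $F_1(\cdot, Q) = F_2(\cdot, Q)$ must hold for \emph{every} $\gamma$, not just $Q$-almost every $\gamma$, because the strong notion quantifies over all points of the support. This is exactly what the argument in Proposition~\ref{prop:same-equilibria} provides once $Q(\dom L) = 1$ is known. With that in hand, the rest is bookkeeping.
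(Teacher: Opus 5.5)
Your proposal is correct and follows the paper's route: the paper deduces Theorem~\ref{thm:3:equiv} directly from Theorem~\ref{thm:strong-iff-equilibrium} and Lemma~\ref{lemm:OOpt-closed}, and leaves implicit the transfer between the two games via Proposition~\ref{prop:same-equilibria}. Your explicit check that $Q(\dom L)=1$ yields $F_1(\cdot,Q)=F_2(\cdot,Q)$ on all of $\Gamma$ is precisely the detail the paper leaves to the reader; it is needed because the strong notion quantifies over every point of the support.
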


\section{Numerical illustration}
\label{sec:illustration}

We now provide a numerical illustration for the mean field game from Section~\ref{Cucker-Smale:The Model}. In addition to \ref{Hypo3-Omega}--\ref{individualCost}, we work here under the additional assumption that
\[
h(t, x, \widetilde x, p, \widetilde p) = 0 \qquad \text{ if } (x, p) = (\widetilde x, \widetilde p),
\]
which implies that the interaction cost between an agent and themself is zero.

Note that the corresponding $N$-player game is also a potential game. More precisely, consider the game with $N$ agents where the aim of agent $i \in \{1, \dotsc, N\}$ starting at the position $x_{0, i}$ is to minimize with respect to $\gamma_i \in \Gamma$ the cost
\[
F_N(\gamma_i, \gamma_{-i}) = L(\gamma_i) + \frac{1}{N} \sum_{j=1}^N H(\gamma_i, \gamma_j)
\]
with the constraint $\gamma_i(0) = x_{0, i}$, where $L$ and $H$ are given as in Remark~\ref{remk:concrete} and $\gamma_{-i} = (\gamma_1, \dotsc, \gamma_{i-1},\allowbreak \gamma_{i+1}, \dotsc, \gamma_N) \in \Gamma^{N-1}$ represents the choices of trajectories of other agents. Note that, since $H(\gamma_i, \gamma_i) = 0$, the above sum can be equivalently taken over $j \in \{1, \dotsc, N\} \setminus \{i\}$. It is easy to verify that this $N$-player game is a potential game, with potential
\begin{equation}
\label{eq:J_N}
\mathcal J_N(\gamma_1, \dotsc, \gamma_N) = \frac{2}{N} \sum_{i=1}^N L(\gamma_i) + \frac{1}{N^2} \sum_{i=1}^N \sum_{j=1}^N H(\gamma_i, \gamma_j).
\end{equation}
In addition, $F_N(\gamma_i, \gamma_{-i})$ is nothing but the cost $F(\gamma_i, Q)$ from \eqref{eq:cost} with $Q = \frac{1}{N}\sum_{j=1}^N \delta_{\gamma_j}$, and $\mathcal J_N(\gamma_1, \dotsc, \gamma_N)$ is nothing but $\mathcal J(Q)$ with the same $Q$ and $\mathcal J$ given by \eqref{eq:def-mathcal-J}.

Numerical approximations of equilibria of this $N$-player game can be obtained by minimizing the function $\mathcal J_N$ from \eqref{eq:J_N} with the constraints $\gamma_i(0) = x_{0, i}$ for every $i \in \{1, \dotsc, N\}$. In order to avoid direct minimization over the whole space $\Gamma^N$, one can minimize $\mathcal J_N$ one variable at each time through a classical coordinate descent algorithm (see, e.g., \cite{Wright2015Coordinate} for an overview of coordinate descent algorithms in finite dimension), as described in Algorithm~\ref{algo:CoordinateDescent}.

\begin{algorithm}
\caption[Coordinate descent algorithm for minimizing the function $\mathcal J_N$]{Coordinate descent algorithm for minimizing the function $\mathcal J_N$ from \eqref{eq:J_N} with constraints $\gamma_i(0) = x_{0, i}$ for every $i \in \{1, \dotsc, N\}$}
\label{algo:CoordinateDescent}
\begin{algorithmic}
\REQUIRE Positive integer $N$, initial conditions $x_{0, 1}, \dotsc, x_{0, N}$ in $\overline\Omega$.
\STATE Initialize $\gamma_1, \dotsc, \gamma_N$ in $\Gamma$ with $\gamma_i(0) = x_{0, i}$ for every $i \in \{1, \dotsc, N\}$.
\REPEAT
  \FOR{$i \in \{1, \dotsc, N\}$}
	  \STATE Select a new $\gamma_i \in \displaystyle\argmin_{\substack{\tilde\gamma_i \in \Gamma \\ \tilde\gamma_i(0) = x_{0, i}}} \mathcal J_N(\gamma_1, \dotsc, \gamma_{i-1}, \tilde\gamma_i, \gamma_{i+1}, \dotsc, \gamma_N)$
	\ENDFOR
\UNTIL{some convergence criterion is met.}
\end{algorithmic}
\end{algorithm}

Note that, even though $\mathcal J_N$ is a sum over $N + N^2$ terms, at each iteration of the algorithm, when minimizing for some agent $i$, $\mathcal J_N(\gamma_1, \dotsc, \gamma_{i-1}, \widetilde\gamma_i, \gamma_{i+1}, \dotsc, \gamma_N)$ can be replaced by the individual cost $F_N(\widetilde\gamma_i, \gamma_{-i})$, which is a sum of only $1 + N$ terms, as all other terms appearing in $\mathcal J_N(\gamma_1, \dotsc, \gamma_{i-1}, \widetilde\gamma_i, \gamma_{i+1}, \dotsc, \gamma_N)$ are independent of $\widetilde\gamma_i$. Hence, the coordinate descent algorithm coincides with a best response algorithm from game theory, in which, at each step, one player changes their strategy to their best response, while all other players remain with the same strategy (see, e.g., \cite{Monderer1996Potential}, where it is shown that the best response algorithm converges to a Nash equilibrium for potential games with finitely many actions for each player).

For our numerical illustration, we have applied Algorithm~\ref{algo:CoordinateDescent} to a slight modification of the mean field game from Section~\ref{Cucker-Smale:The Model} in which there are two different populations. More precisely, we consider that there are two populations evolving in $\overline\Omega$, the distributions of their trajectories being described by two probability measures $Q_1, Q_2 \in \mathcal P(\Gamma)$. The populations are identical except for their initial distributions and target sets: for $i \in \{1, 2\}$, we assume that population $i$ is distributed at time $t = 0$ according to a measure $m_0^i \in \mathcal P(\overline\Omega)$ and that the goal of each agent of population $i$ is to reach a given target set $\Xi_i$ satisfying \ref{Hypo3-Gamma}. The cost optimized by an agent of population $i$ is of the form \eqref{eq:cost}, with $\Psi(\tau(\gamma))$ replaced by $\Psi(\tau_i(\gamma))$, where $\tau_i(\gamma)$ is defined as in \eqref{eq:defi3-tau} with $\Xi$ replaced by $\Xi_i$, and the integral of $h$ in \eqref{eq:cost} is replaced by the sum of two integrals, one with respect to $Q_1$, in which $\tau(\gamma) \wedge \tau(\widetilde\gamma)$ is replaced by $\tau_i(\gamma) \wedge \tau_1(\widetilde\gamma)$, and another with respect to $Q_2$, in which $\tau(\gamma) \wedge \tau(\widetilde\gamma)$ is replaced by $\tau_i(\gamma) \wedge \tau_2(\widetilde\gamma)$. The analysis carried out in Sections~\ref{sec:abstract} and \ref{sec:application} can be easily adapted to this two-population setting.

In our illustration, we consider that agents move in the square $\Omega = (0, 1) \times (0, 1) \subset \mathbb R^2$. Agents of population $1$ are initially distributed according to the uniform measure $m_0^1$ in the left side of the square, $\{0\} \times [0, 1]$, and their goal is to reach the target set $\Xi_1 = \{1\} \times [0, 1]$. Agents of population $2$ are initially distributed according to the uniform measure $m_0^2$ in the right side of the square, $\{1\} \times [0, 1]$, and their goal is to reach the target set $\Xi_2 = \{0\} \times [0, 1]$. For the illustration, we have chosen the functions $\ell$ and $\Psi$ as
\[
\ell(t, x, p) = \frac{1}{2} \abs{p}^2, \qquad \Psi(t) = t.
\]
As for the function $h$, our aim is to select a function such that, when computed along two trajectories $\gamma$ and $\widetilde\gamma$, gives
\begin{align*}
h(t, \gamma(t), \widetilde\gamma(t), \dot\gamma(t), \dot{\widetilde\gamma}(t)) 
= 8 \exp\left(-\frac{\abs{\gamma(t) - \widetilde\gamma(t)}^2}{2 \sigma^2}\right) \max\left[0, -\frac{\diff}{\diff t} \abs{\gamma(t) - \widetilde\gamma(t)}\right],
\end{align*}
where $\sigma > 0$, since the above expression penalizes two agents that are close to each other, but only when the distance $\abs{\gamma(t) - \widetilde\gamma(t)}$ between them is decreasing. As
\begin{equation*}
\frac{\diff}{\diff t} \abs{\gamma(t) - \widetilde\gamma(t)} = (\dot\gamma(t) - \dot{\widetilde\gamma}(t)) \cdot \frac{\gamma(t) - \widetilde\gamma(t)}{\abs{\gamma(t) - \widetilde\gamma(t)}},
\end{equation*}
we then wish to choose
\begin{equation*}
h(t, x, \widetilde x, p, \widetilde p) = 8 \exp\left(-\frac{\abs{x - \widetilde x}^2}{2 \sigma^2}\right) \max\left[0, -(p - \widetilde p) \cdot \frac{x - \widetilde x}{\abs{x - \widetilde x}}\right].
\end{equation*}
However, the above expression is not defined for $x = \widetilde x$, and it is not possible to provide a continuous extension of it. We then choose a parameter $\delta > 0$ and replace the term $-(p - \widetilde p) \cdot \frac{x - \widetilde x}{\abs{x - \widetilde x}}$ by a convex combination of itself and the continuous expression $\abs{p - \widetilde p}$, with weights that are respectively $1$ and $0$ when $\abs{x - \widetilde x} \geq \delta$, and that have an affine dependence on $\abs{x - \widetilde x}$ when this term is less than $\delta$. Hence, the final expression of the function $h$ chosen for our simulation is
\begin{equation}
\label{eq:simul-parameters}
\begin{aligned}
h(t, x, \widetilde x, p, \widetilde p) = 8 \exp\left(-\frac{\abs{x - \widetilde x}^2}{2 \sigma^2}\right) \max\Biggl[0, & -\min\left(1, \frac{\abs{x - \widetilde x}}{\delta}\right) (p - \widetilde p) \cdot \frac{x - \widetilde x}{\abs{x - \widetilde x}} \\
& \qquad + \max\left(0, 1 - \frac{\abs{x - \widetilde x}}{\delta}\right) \abs{p - \widetilde p}\Biggr],
\end{aligned}
\end{equation}
and we selected $\sigma = \frac{1}{4}$ and $\delta = \frac{1}{5}$.

\begin{figure}[ht]
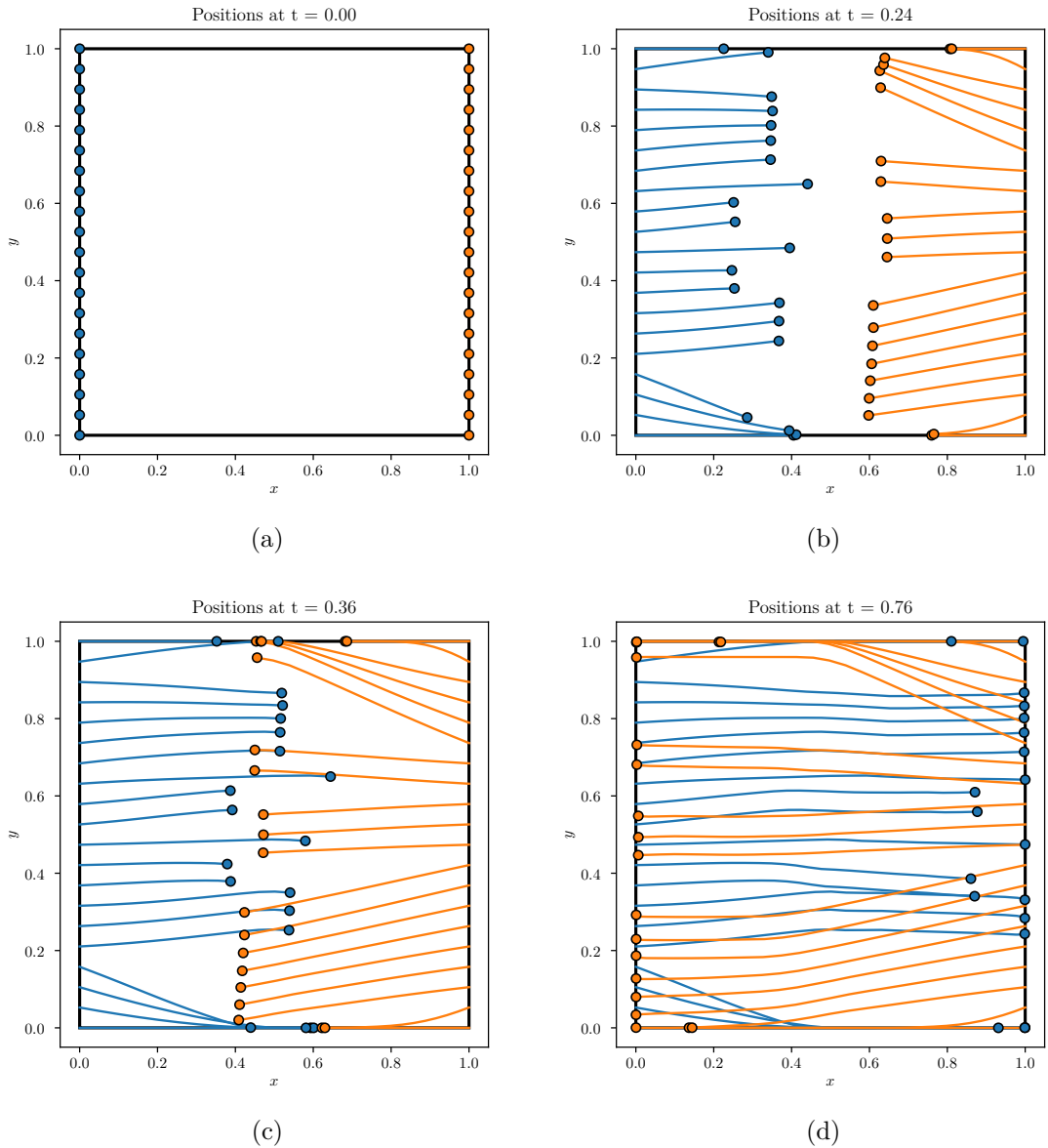

\centering
\begin{tabular}{@{} >{\centering} m{0.5\textwidth} @{} >{\centering} m{0.5\textwidth} @{}}
\resizebox{0.5\textwidth}{!}{\input{frame_0.pgf}} & \resizebox{0.5\textwidth}{!}{\input{frame_8.pgf}} \tabularnewline
(a) & (b) \tabularnewline
\resizebox{0.5\textwidth}{!}{\input{frame_12.pgf}} & \resizebox{0.5\textwidth}{!}{\input{frame_25.pgf}} \tabularnewline
(c) & (d) \tabularnewline
\end{tabular}
\caption[Trajectories of the agents of the game described in Section~\ref{sec:illustration}]{Trajectories of the agents of the game described in Section~\ref{sec:illustration} at times (a) $t = 0$, (b) $t = 0.24$, (c) $t = 0.36$, and (d) $t = 0.76$. Each population is represented by a different color, trajectories are represented by solid lines, and circles represent the current positions of the agents.}
\label{fig:simul_mfg}
\end{figure}

We have simulated the $N$-player game with $20$ players in each population, and the corresponding results are provided in Figure~\ref{fig:simul_mfg}. For the simulation, we have replaced the space $\Gamma$ by $\mathcal C([0, T], \overline\Omega)$ with $T = 3$ (this choice was validated a posteriori by noticing that all agents in the simulation reach their desired target set much before the final time $T$). The time interval $[0, T]$ was discretized in $N_t = 100$ equally spaced points and trajectories where thus represented as elements of $\overline\Omega^{N_t}$. The initial distribution $m_0^1$ was replaced by a discrete distribution concentrated on $20$ equally spaced points in the segment $\{0\} \times [0, 1]$, with a similar discretization for $m_0^2$. For a given trajectory $\gamma$ of population $i$ represented as a sequence of $N_t$ points in $\overline\Omega$, its velocity was computed as the sequence of $N_t - 1$ points obtained by dividing the difference between two successive positions in the trajectory by the time step $\Delta t = \frac{T}{N_t - 1}$, while its exit time $\tau(\gamma)$ was approximated by $\int_0^T \chi(\gamma(t)) \diff t$, where $\chi$ is a smooth function approximating the indicator function $\mathbbm 1_{\overline\Omega \setminus \Xi_i}$. The nonsmooth functions $\max(0, \cdot)$ and $\min(1, \cdot)$ appearing in \eqref{eq:simul-parameters} were replaced by smooth approximations, while the integrals from \eqref{eq:cost} were approximated using the rectangle method. Algorithm~\ref{algo:CoordinateDescent} was initialized with trajectories going in a straight line with constant velocity from their initial position to the point in the target set with the same ordinate as the initial position, while, for the minimization step in Algorithm~\ref{algo:CoordinateDescent}, we used the function \texttt{minimize} from Python's \texttt{scipy.optimize} toolbox \cite{Virtanen2020SciPy}, alternating between Nelder--Mead and L-BFGS-B methods for optimization.

We observe, in Figure~\ref{fig:simul_mfg}, a behavior close to the expected qualitative behavior for a crowd motion model. Agents reach the target set while minimizing the cost \eqref{eq:cost}, and one can see the effect of the interaction term represented by $h$ by noticing that agents seem to anticipate the arrival of agents in the opposite direction, leaving them some space to pass in order to avoid a large congestion term due to $h$.

\end{document}